\newcolumntype{C}[1]{>{\centering\arraybackslash}p{#1}}
\DeclareSymbolFont{letters}{OML}{cmm}{m}{it}
\DeclareMathAlphabet{\mathcal}{OMS}{cmsy}{m}{n}
\newtheorem{theorem}{Theorem}[section]
\newtheorem{lemma}[theorem]{Lemma}
\newtheorem{proposition}[theorem]{Proposition}
\newtheorem*{claim*}{Claim}
\theoremstyle{definition}
\newtheorem{definition}[theorem]{Definition}
\newtheorem{example}[theorem]{Example}
\newtheorem*{theorem*}{Theorem}
\newenvironment{proofclaim}[1][the claim]{\par
\noindent \emph{Proof of #1.} }
{\hfill$\dashv$\vspace{4pt}}
\newcommand{\PD}{\ensuremath{\mathbf{PD}}\xspace}
\newcommand{\CPC}{\ensuremath{\mathbf{CPL}}\xspace}
\newcommand{\CPL}{\ensuremath{\mathbf{CPL}}\xspace}
\newcommand{\PT}{\ensuremath{\mathbf{PT}^+}\xspace}
\newcommand{\PTw}{\ensuremath{\mathbf{PT}}\xspace}
\newcommand{\MPT}{\ensuremath{\mathbf{FPT}}\xspace}
\newcommand{\PU}{\ensuremath{\mathbf{PU}^+}\xspace}
\newcommand{\PUw}{\ensuremath{\mathbf{PU}}\xspace}
\newcommand{\PInd}{\ensuremath{\mathbf{PI}}\xspace}
\newcommand{\PI}{\ensuremath{\mathbf{PI}}\xspace}
\newcommand{\PInem}{\ensuremath{\PInd^{+}}\xspace}
\newcommand{\PDs}{\ensuremath{\mathbf{PD}^+}\xspace}
\newcommand{\PInc}{\ensuremath{\mathbf{PInc}}\xspace}
\newcommand{\PIncs}{\ensuremath{\mathbf{PInc}^+}\xspace}
\newcommand{\ECL}{\ensuremath{\mathbf{CPL^+}}\xspace}
\newcommand{\ECLp}{\ensuremath{\mathsf{L}}\xspace}
\newcommand{\dep}{\ensuremath{\mathop{=\!}}\xspace}
\newcommand{\sor}{\ensuremath{\otimes}\xspace}
\newcommand{\bor}{\ensuremath{\vee}\xspace}
\newcommand{\nsor}{\ensuremath{\varoast}\xspace}
\newcommand{\bignsor}{\mathop{\raisebox{-1.9pt}{\ensuremath{\huge\text{\nsor}}}}}
\DeclareMathOperator*{\bigsor}{\bigotimes}
\DeclareMathOperator*{\bigbor}{\bigvee}
\newcommand{\nem}{\ensuremath{\mathop{\mbox{\small\rm N\hspace{-0.2pt}E}}}\xspace}
\newcommand{\ci}{\ensuremath{\wedge\textsf{I}}\xspace}
\newcommand{\ce}{\ensuremath{\wedge\textsf{E}}\xspace}
\newcommand{\sorwe}{\ensuremath{\sor\textsf{E}^-}\xspace}
\newcommand{\bote}{\ensuremath{\mathbf{\bot}\textsf{E}}\xspace}
\newcommand{\sorws}{\ensuremath{\sor\textsf{Sub}^-}\xspace}
\newcommand{\bori}{\ensuremath{\bor\textsf{I}}\xspace}
\newcommand{\bore}{\ensuremath{\bor\textsf{E}}\xspace}
\newcommand{\exclmid}{\ensuremath{\textsf{EM}_0}\xspace}
\newcommand{\com}{\ensuremath{\textsf{Com}}\xspace}
\newcommand{\ass}{\ensuremath{\textsf{Ass}}\xspace}
\newcommand{\dstr}{\ensuremath{\textsf{Dstr}}\xspace}
\newcommand{\boti}{\ensuremath{\bot\textsf{I}}\xspace}
\newcommand{\alphai}{\ensuremath{\alpha\textsf{I}}\xspace}
\newcommand{\sealpha}{\ensuremath{\textsf{SE}_\alpha}\xspace}
\newcommand{\indi}{\ensuremath{\textsf{IndI}}\xspace}
\newcommand{\see}{\ensuremath{\textsf{SE}_2}\xspace}
\newcommand{\sen}{\ensuremath{\textsf{SE}_1}\xspace}
\newcommand{\sei}{\ensuremath{\textsf{SE}_{\textsf{Ind}}}\xspace}
\newcommand{\dstrs}{\ensuremath{\textsf{Dstr}^\ast\!}\xspace}
\newcommand{\wexfalso}{\ensuremath{\it{\textsf{ex falso}}^-}\xspace}
\newcommand{\sexfalso}{\ensuremath{\it{\textsf{ex falso}}^+}\xspace}
\newcommand{\sorwi}{\ensuremath{\sor\textsf{I}^-}\xspace}
\newcommand{\sctri}{\ensuremath{\mathbf{0}\textsf{I}}\xspace}
\newcommand{\sctre}{\ensuremath{\mathbf{0}\textsf{E}}\xspace}
\newcommand{\sctrc}{\ensuremath{\mathbf{0}\textsf{Ctr}}\xspace}
\newcommand{\sorwk}{\ensuremath{\sor\textsf{W}}\xspace}
\newcommand{\nemi}{\ensuremath{\nem\textsf{I}}\xspace}
\newcommand{\Sub}{\ensuremath{\mathsf{Sub}}\xspace}
  \lstdefinelanguage{pseudo}{
    morekeywords={if,elseif,then,return,end,choose,guess,when,for,foreach,while,case},
    morekeywords=[3]{false,true,and,or,not},
    morecomment=[l]{//}
  }
\newcommand{\problemdef}[3]{%
  \begin{list}{}{%
    \parsep 0pt
    \itemsep 0ex plus 0.1ex 
    \settowidth{\labelwidth}{\it Question:}
    \leftmargin\labelwidth \advance\leftmargin by \labelsep \advance\leftmargin by \leftmargini
  }
    \item[{\it Problem:\hfill}] #1
    \item[{\it Input:\hfill}] #2
    \item[{\it Question:\hfill}] #3
  \end{list}%
}
\newtheorem{namedproofcontent}{}
\definecolor{darkgreen}{rgb}{0,.7,0}
\newcommand{\logicFont}[1]{\mathcal{#1}}
\newcommand{\classFont}[1]{\mathsf{#1}}
\newcommand{\halfliteral}[1]{\protect\ensuremath{#1}}
\newcommand{\literal}[1]{\halfliteral{#1}\xspace}
\newcommand{\set}[3][]{\literal{\left\{#2\;\middle|\;\ifthenelse{\equal{#1}{}}{\text{#3}}{\parbox{#1}{#3}}\right\}}}
\newcommand{\logic}[1]{\literal{\logicFont{#1}}}
\newcommand{\paraLogic}[2]{\ensuremath{\logic{#1}\ifthenelse{\equal{#2}{}}{}{(#2)}}\xspace}
\newcommand{\LL}{\ensuremath{\mathsf{L}}\xspace}
\newcommand{\class}[1]{\literal{\classFont{#1}}}
\newcommand{\TIME}{\class{TIME}}
\newcommand{\NTIME}{\class{NTIME}}
\newcommand{\ATIME}{\class{ATIME}}
\newcommand{\SPACE}{\class{SPACE}}
\newcommand{\NSPACE}{\class{NSPACE}}
\newcommand{\ASPACE}{\class{ASPACE}}
\journal{arXiv.org}
\begin{document}

\begin{frontmatter}



\title{Propositional Team Logics\tnoteref{tn}}

\tnotetext[tn]{Some results in this paper were included in the dissertation of the first author \cite{Yang_dissertation}, which was supervised by the second author.}

 \author[FY]{Fan Yang\corref{cor}\fnref{fnf}}

  \address[FY]{Department of Values, Technology and Innovation, Delft University of Technology, Jaffalaan 5, 2628 BX Delft, The Netherlands}
\ead{fan.yang.c@gmail.com}

\fntext[fn]{This paper is partially based on research that was carried out in the Graduate School of Mathematics and Statistics of the University of Helsinki. 
}

 \author[JV]{Jouko V\"{a}\"{a}n\"{a}nen\fnref{fnj}}

  \ead{jouko.vaananen@helsinki.fi} 
 \address[JV]{Department of Mathematics and Statistics, Gustaf H\"{a}llstr\"{o}min katu 2b, PL 68, FIN-00014 University of Helsinki, Finland and University of Amsterdam, The Netherlands}

\fntext[fnj]{The research was partially supported by grant 251557 of the Academy of Finland.}

\cortext[cor]{Corresponding author}



\begin{abstract}
We consider  team semantics for propositional logic, continuing \cite{VY_PD}. In team semantics the truth of a propositional formula is considered in a {\em set} of valuations, called a {\em team}, rather than in an individual valuation. This offers the possibility to give  meaning to concepts such as  dependence, independence and inclusion.
We associate with every formula $\phi$ based on finitely many propositional variables  the set $\llbracket\phi\rrbracket$ of {\em teams} that satisfy  $\phi$. We define a full propositional team logic in which every  set of teams is definable as  $\llbracket\phi\rrbracket$ for suitable $\phi$. This requires going beyond the logical operations of classical propositional logic. 
%
We exhibit  a hierarchy of logics between the smallest, viz. classical propositional logic, and the full propositional team logic. We characterize these different logics in several ways: first syntactically by their logical operations, and then semantically by the kind of sets of teams they are capable of defining. In several important cases we are able to find complete axiomatizations for these logics.

\end{abstract}

\begin{keyword}

propositional team logics
 \sep team semantics \sep dependence logic \sep non-classical logic 


\MSC[2010] 03B60


\end{keyword}

\end{frontmatter}

\section{Introduction}

In classical propositional logic the propositional atoms, say $p_1,\ldots, p_n$, 
are given a truth value $1$ or $0$ by what is called a valuation and then any propositional formula $\phi$ can be associated with the set $|\phi|$ of valuations giving $\phi$ the value $1$. This constitutes   a perfect analysis of the circumstances under which $\phi$ is true. The formula $\phi$ can be presented in  so-called Disjunctive Normal Form based on taking the disjunction of descriptions of the valuations in $|\phi|$. Two fundamental results can be proved for classical propositional logic. The first says that   {\em every} set of valuations of $p_1,\ldots, p_n$ is equal to $|\phi|$ for some propositional formula $\phi$. The second fundamental result says that there is a simple {\em complete} axiomatization of those $\phi$ that are valid in the sense that $|\phi|$ is the full set of all valuations on the propositional atoms occurring in $\phi$.

In this paper, which continues \cite{VY_PD}, we consider a richer semantics called {\em team semantics} for propositional logic. In team semantics the truth of a propositional formula is evaluated in a {\em set} of valuations, called a {\em team}, rather than in an individual valuation. This offers the possibility of considering {\em probabilities} of formulas, as in \cite{MR3423958}, and the meaning of concepts such as {\em dependence, independence} and {\em inclusion}, as in \cite{VY_PD}. It is the latter possibility that is our focus in this paper.  

Team semantics was introduced by the second author in \cite{Van07dl} on the basis of  a new compositional semantics, due to Hodges \cite{Hodges1997a,Hodges1997b}, for independence friendly logic \cite{HintikkaSandu1989,IF_book}. The monograph \cite{Van07dl} was written in the context of predicate logic and team semantics was used to give meaning to a variable being totally determined by a sequence of other variables. In the context of propositional and modal logic team semantics was introduced  in \cite{VaMDL08}. In propositional logic team semantics can be used to give meaning to a propositional variable being totally determined by a sequence of other variables. It took a few years before this idea was fully exploited in  \cite{Virtema2014,Yang_dissertation}.  Meanwhile 
modal dependence logic, i.e. team semantics for modal logic, was investigated e.g. in \cite{sevenster09,eblo11,EHMMVV2013,lovo10,MID_mc,HLSV14,HellaStumpf2015}.

When propositional formulas are evaluated in a team---i.e. a set---of valuations, a whole new landscape opens in front of us. The first observation is a numerical explosion: If we have $n$ propositional atoms, there are $2^n$ valuations, $2^{2^n}$ teams, and $2^{2^{2^n}}$ sets of teams. For $n=3$ the third number is about $10^{77}$. This emphasises the need for mathematical methods in team semantics.  The truth table methods which list all possibilities is bad enough in ordinary propositional logic, but totally untenable in team semantics.

In classical propositional logic, we associate with every formula $\phi$ based on propositional atoms $p_1,\ldots,p_n$  the set $|\phi|$ of {\em valuations} that satisfy $\phi$. Similarly, in team semantics we associate with every formula $\phi$ based on propositional atoms $p_1,\ldots,p_n$  the set $\llbracket\phi\rrbracket$ of {\em teams} that satisfy (in the sense defined below)  $\phi$. By choosing our formulas carefully we can express {\em every} set of teams in the form $\llbracket\phi\rrbracket$ for suitable $\phi$, but this requires going beyond the logical operations of classical propositional logic. We can also axiomatize the propositional formulas that are {\em valid} i.e. satisfied by every team.

The rich structure of teams gives rise to a plethora of new propositional connectives. Most importantly, disjunction has several versions. To define when a team $X$  satisfies $\phi\vee\psi$ we can say that this happens if $X$ satisfies $\phi$ or it satisfies $\psi$, or we can say that this happens if $X$ is the union of two sets $Y$ and $Z$ such that $Y$ satisfies $\phi$ and $Z$ satisfies $\psi$, or, finally, we can also say that this happens if, assuming  $X\ne\emptyset$, the team $X$ is the union of two sets $Y\ne\emptyset$ and $Z\ne\emptyset$ such that $Y$ satisfies $\phi$ and $Z$ satisfies $\psi$. If $X$ is a singleton, which corresponds to the classical case, the first two disjunctions are equivalent, but the third is equivalent to $\phi\wedge\psi$.  But for non-singleton teams there is a big difference in every respect. These distinctions, leading to different variants of familiar logical operations, reveal  a hierarchy of logics between the smallest, viz. classical propositional logic, and the maximal one capable of defining every set of teams. We characterize these different logics in several ways: first syntactically by their logical operations, and then semantically by the kind of sets of teams they are capable of defining. In several important cases we are able to find complete axiomatizations for these logic.

In our previous paper \cite{VY_PD} we considered sets of teams that are {\em downward closed} in the sense that if a team is in the set, then every subteam is in the set, too. Respectively, the logics studied in \cite{VY_PD} have the property that the sets of teams defined by their formulas are downward closed. We isolated five  equivalent logics with this property, all based on some aspect of {\em dependence}. In these logics every downward closed set of teams is definable, and the logics have complete axiomatizations. The axiomatizations are by no means as simple as typical axiomatizations of classical propositional logic, but have still a certain degree of naturality.

In this paper we consider sets of teams, and related propositional logics, that are not downward closed. A property in a sense opposite to downward closure is closure under (set-theoretical) unions. In fact, a set of teams that is both closed downward and closed under unions is definable in classical propositional logic. So-called {\em inclusion logic}, to be defined below, is an example of a logic in which definable sets of teams are closed under unions. So-called {\em independence logic}, also to be defined below, is neither downward closed nor closed under unions. Our methods do not seem to apply to independence logic, we can merely approximate it from below and from above with logics that we understand better.

We do not rule out the possibility that a team is empty. Accordingly we distinguish whether a set of teams contains the empty team as an element or not. The basic dependence, independence and inclusion logics have the Empty Team Property i.e. every definable set of teams contains the empty team. However, many of our proofs depend on the ability to express the non-emptiness of a team. For this purpose we also consider a special atomic formula $\nem$ the only role of which is to say that the team is nonempty.
This so-called non-emptiness $\nem$ was introduced in \cite{Yang_dissertation} and in \cite{Vaananen_multiverse}. We give examples which suggest that $\nem$ is not completely alien to common usage of language although it seems hopelessly abstract.  The introduction of $\nem$ leads to two versions of each of our propositional logics: one without $\nem$ and one with $\nem$. 

This paper is structured as follows. In Section~\ref{premin} we define the basic concepts and make some preliminary observations. 
We also define the propositional team logics we study in the paper, including \emph{propositional dependence logic}, \emph{propositional independence logic}, \emph{propositional union closed logic}, \emph{propositional inclusion logic} and \emph{propositional team logic} as well as the strong version of each. 
In Section~\ref{pisec:nf} we establish basic normal forms and use them to obtain semantic characterizations of our logics, whether strong or not. In Section \ref{sec:meta_prop} we prove some metalogical properties of our logics, including compactness and the closure under classical substitutions of the logics. In Section~\ref{axioma} we establish complete axiomatizations of the strong versions of our logics. Several open problems are listed in the concluding Section~\ref{conc}.

\section{Preliminaries}\label{premin}




Our propositional team logic follows the pattern set forth on first-order level by {\em dependence logic} \cite{Van07dl}, {\em independence logic} 
\cite{D_Ind_GV}, as well as  inclusion and exclusion logics \cite{Pietro_I/E}. 
The concepts of dependence and independence were earlier introduced in database theory, starting with  \cite{DBLP:persons/Codd71a}. 
However, in database theory the focus is on dependence and independence of attributes per se, while we take the dependence and independence as atomic formulas and use logical operations to build complex formulas. 
The benefit of considering complex formulas is that we can express very involved types of dependence and independence. A good example is the fact that first-order inclusion logic can express in finite models exactly all dependencies expressible in fixed point logic \cite{DBLP:conf/csl/GallianiH13}.


We follow here the reasoning of Wilfrid Hodges \cite{Hodges1997a,Hodges1997b} to the effect that a set of valuations, rather than a single valuation,  permits the delineation of dependence and independence. We call such sets {\em teams}. Let us now give the formal definition of a team.







\begin{definition}  
Throughout the paper we fix an infinite set ${\rm Prop}=\{p_i\mid i\in \mathbb{N}\}$ of propositional variables. We sometimes use $\vec{x}, \vec{y},\vec{z},\dots$ to denote arbitrary sequences of propositional variables. 
A \emph{valuation} $s$ on a set $N$ of indices (i.e. a set of natural numbers) is a function from $N$ to the set $2=\{0,1\}$. A \emph{team} $X$ on $N$ is a set of valuations on $N$. A \emph{team} $X$ on the set $\mathbb{N}$ of all natural numbers is called a \emph{team}.
If $X$ is a team on $N$ and $N'\subseteq N$, then we write $X\upharpoonright N'$ for the set $\{s\upharpoonright N'\mid s\in X\}$.
\end{definition}

\Cref{tab:team} shows an example of a team $X$ consisting of six valuations. One possibility is to view a team as an information state as is done in {\em inquisitive logic} \cite{InquiLog}. The idea is that there is one ``true" valuation $v$ and the valuations in the team are approximations of it as far as we know. The bigger the team the bigger is our uncertainty about $v$. On the other hand, if the team is as small as a singleton $\{v\}$, we know the valuation, and there is no uncertainty.
This is just one intuition behind the team concept. A different intuition is that the valuations in a team arise from scientific observations. They may arise also from the organizational structure of a large company, etc.

We call propositional logics that have  semantics based on teams \emph{propositional team logics}. 
%
As the first step, let us examine the usual classical propositional logic in the setting of team semantics.

\begin{definition}\label{syntax_cpc}
Well-formed
formulas of \emph{classical propositional  logic} (\CPC)  are given by the following grammar 
\[
    \phi::= \,p_i\mid \neg p_i\mid\bot\mid(\phi\wedge\phi)\mid(\phi\sor\phi).
\] 
Here we use the symbol $\sor$ to denote the disjunction of \CPL. A well-formed formula of \CPC is said to be a \emph{formula in the language of \CPC} or a \emph{classical formula}.
\end{definition}

 \begin{table}[t]
\begin{center}
\begin{tabular}{c|ccccc}
&$p_0$&$p_1$&$p_2$&$p_3$&$\dots$\\
\hline
$s_1$&1&1&1&1\\
$s_2$&1&0&0&0\\
$s_3$&0&1&1&1\\
$s_4$&0&0&0&0&$\dots$\\
$s_5$&1&1&0&0\\
$s_6$&0&1&0&1\\
\hline
\end{tabular}
\caption{A team $X=\{s_1,\dots,s_6\}$\label{tab:team}}
\end{center}
\end{table}

\begin{definition}\label{TS_CT}
We  define inductively the notion of a classical formula $\phi$  being \emph{true} on a team $X$, denoted by $X\models\phi$, as follows:
\begin{itemize}
\item $X\models p_i$ iff
for all $s\in X$, $s(i)=1$
  \item $X\models\neg p_i$  iff
for all $s\in X$, $s(i)=0$
  \item $X\models\bot$ iff $X=\emptyset$
  \item $X\models\phi\wedge\psi$ iff $X\models\phi$ and
  $X\models\psi$
  \item $X\models\phi\sor\psi$ iff there exist two subteams $Y,Z\subseteq X$ with $X=Y\cup Z$ such that
  \(Y\models\phi\text{ and }Z\models\psi\)
\end{itemize}
\end{definition}

We write $\phi(p_{i_1},\dots,p_{i_n})$ if the propositional variables occurring in the formula $\phi$ are among $p_{i_1},\dots,p_{i_n}$. The following lemma summarizes the main properties of classical formulas. The reader is referred to \cite{VY_PD} for details on other properties of the team semantics of classical formulas. 

\begin{lemma}\label{cpl_prop}
Classical formulas have the \emph{Locality Property}, the \emph{Flatness Property}, the \emph{Downward Closure Property}, the \emph{Union Closure Property},  and the \emph{Empty Team Property} defined as follows.
\begin{description}
\item[(Locality Property)] Let $X$ and $Y$ be two teams, and $\phi(p_{i_1},\dots,p_{i_n})$ a formula. 
If $X\upharpoonright \{{i_1},\dots,{i_n}\}=Y\upharpoonright \{{i_1},\dots,{i_n}\}$, then
\(X\models\phi\iff Y\models\phi.\) 
\item[(Flateness Property)] \(X\models\phi\iff\forall s\in X(\{s\}\models\phi)\)
\item[(Downward Closure Property)] If $X\models\phi$ and $Y\subseteq X$, then $Y\models\phi$
\item[(Union Closure Property)] If $X\models\phi$ for all $X\in\mathcal{X}$, then $\bigcup\mathcal{X}\models\phi$
\item[(Empty Team Property)] $\emptyset\models\phi$ always holds 

\end{description}

\end{lemma}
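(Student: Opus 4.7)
The plan is to prove all five properties by simultaneous induction on the structure of classical formulas $\phi$, but it will be cleaner to prove them in a strategic order: first the Empty Team Property, then Downward Closure and Union Closure as independent inductions, then derive Flatness from the latter two together with Empty Team, and finally treat Locality by its own induction. Writing $V = \{i_1,\dots,i_n\}$ for the index set of the propositional variables occurring in $\phi$, the only interesting connective throughout will be $\sor$; conjunction is immediate from the induction hypothesis in every case, and the atomic clauses $p_i$, $\neg p_i$, $\bot$ are handled directly from \Cref{TS_CT}.

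For the Empty Team Property, the atoms $p_i$ and $\neg p_i$ are satisfied by $\emptyset$ vacuously, $\bot$ is satisfied by $\emptyset$ by definition, and for $\sor$ one uses the trivial decomposition $\emptyset = \emptyset \cup \emptyset$ together with the induction hypothesis. For Downward Closure, given $Y \subseteq X$ and a decomposition $X = X_1 \cup X_2$ witnessing $X \models \phi_1 \sor \phi_2$, the subteams $Y \cap X_1$ and $Y \cap X_2$ witness $Y \models \phi_1 \sor \phi_2$ by the induction hypothesis. For Union Closure, if each $X \in \mathcal{X}$ is decomposed as $X = Y_X \cup Z_X$ with $Y_X \models \phi_1$ and $Z_X \models \phi_2$, then $\bigcup\mathcal{X} = (\bigcup_{X \in \mathcal{X}} Y_X) \cup (\bigcup_{X \in \mathcal{X}} Z_X)$ gives a witnessing decomposition, with the two components satisfying $\phi_1$ and $\phi_2$ again by the induction hypothesis. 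Flatness is then immediate: if $X \models \phi$, Downward Closure yields $\{s\} \models \phi$ for each $s \in X$; conversely, writing $X = \bigcup_{s \in X}\{s\}$ and applying Union Closure (with the Empty Team Property covering $X = \emptyset$) gives $X \models \phi$.

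Locality requires a separate induction and is the step most prone to sloppiness. Assume $X \upharpoonright V = X' \upharpoonright V$. The atomic cases are direct: satisfaction of $p_i$, $\neg p_i$ depends only on the $i$-th coordinates of valuations, and $X = \emptyset$ is equivalent to $X \upharpoonright V = \emptyset$. For $\sor$, given a witnessing decomposition $X = Y \cup Z$ for $\phi_1 \sor \phi_2$, I will build a decomposition of $X'$ by pulling the partition back through the restriction: set
\[
Y' \;=\; \{\, s' \in X' \mid s'\upharpoonright V \in Y \upharpoonright V\,\}, \qquad Z' \;=\; \{\, s' \in X' \mid s'\upharpoonright V \in Z \upharpoonright V\,\}.
\]
Since $X' \upharpoonright V = X \upharpoonright V = (Y \upharpoonright V) \cup (Z \upharpoonright V)$, one gets $X' = Y' \cup Z'$; moreover by construction $Y'\upharpoonright V = Y \upharpoonright V$ and $Z' \upharpoonright V = Z \upharpoonright V$, so the induction hypothesis yields $Y' \models \phi_1$ and $Z' \models \phi_2$.

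The main obstacle I anticipate is precisely the Locality step for $\sor$: one has to resist the tempting but wrong choice of taking $Y' = \{s' \in X' : s'\upharpoonright V \in Y\upharpoonright V\}$ without verifying that $Y' \cup Z' = X'$ and that the restrictions of $Y', Z'$ to $V$ still agree with those of $Y, Z$. Everything else is bookkeeping.
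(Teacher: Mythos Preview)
Your proof is correct. The paper itself does not prove this lemma at all: it simply states the result and refers the reader to \cite{VY_PD} for details, so there is no paper proof to compare against. Your argument is the standard one, and your decision to derive Flatness from Downward Closure, Union Closure, and the Empty Team Property (rather than by a separate induction) is in fact the same observation the paper later records as the equivalence (i)$\Leftrightarrow$(iii) in \Cref{cpl_char_prop}. The Locality step for $\sor$ via pullback of the decomposition along the restriction map is exactly the right construction, and your verification that $Y'\upharpoonright V = Y\upharpoonright V$ and $X' = Y'\cup Z'$ is sound.
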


Under the usual single valuation semantics a classical formula $\phi(p_{i_1},\dots,p_{i_n})$ defines a set $|\phi|=\{s\in 2^N:s\models\phi\}$ of valuations (a team!) on $N=\{{i_1},\dots,{i_n}\}$; the same formula under the team semantics defines a set
\[\llbracket \phi\rrbracket:=\{X\subseteq 2^N\mid X\models\phi\}\]
of teams on $N$. It is well-know that \CPL is \emph{expressively complete} under the usual single valuation semantics in the sense that every property $X\subseteq 2^N$ is definable by a classical formula $\phi$, i.e., $X=|\phi|$. We now define a similar notion of \emph{expressive completeness} for a set of team properties under the team semantics. 

\begin{definition}\label{expressive_comp_df}
Let $\mathbb{P}$ be a set of team properties i.e. a set of sets of teams. We let
$\mathbb{P}_N=\{\mathsf{P}\upharpoonright N: \mathsf{P}\in\mathbb{P}\}$, where each $\mathsf{P}\upharpoonright N=\{X\upharpoonright N:X\in\mathsf{P}\}$ is a team property on a finite set $N$ of indices.
  We say that a propositional team logic \LL \emph{characterizes}  $\mathbb{P}$, if for each index set $N=\{i_1,\dots,i_n\}$, 
\[\mathbb{P}_N=\{\llbracket \phi\rrbracket:~\phi(p_{i_1},\dots,p_{i_n})\text{ is a formula in the language of \LL}\}.\] If a logic characterizes a set $\mathbb{P}$ of team properties, then we also say that the logic is \emph{expressively complete} for $\mathbb{P}$.
\end{definition}

Below we define some interesting team properties, already inherent in Lemma~\ref{cpl_prop}:

\begin{definition}\label{team_prop_df}
A team property $\mathsf{P}$, i.e., a set of teams, is called 
\begin{itemize}
\item  \emph{flat} if  $X\in \mathsf{P}\iff \forall s\in X(\{s\}\in\mathsf{P})$;
\item \emph{downward closed} if  $Y\subseteq X\in \mathsf{P}\Longrightarrow Y\in\mathsf{P}$;
\item \emph{union closed} if $\mathcal{X}\subseteq \mathsf{P}\Longrightarrow \bigcup\mathcal{X}\in\mathsf{P}$.
\end{itemize}

\end{definition}

It follows from our previous paper \cite{VY_PD} that several propositional logics of dependence (\PD) (including propositional dependence logic and inquisitive logic) are expressively complete for the set of all nonempty downward closed team properties. 
In this paper we will study logics that are expressively complete for each of the team properties defined above. In particular, we will prove that \CPL is expressively complete for the set of flat team properties  and it is the biggest propositional team logic that defines both all downward closed team properties and all union closed team properties (\Cref{cpl_exp_compl}).



The empty team is  a member of any flat team property and of any nonempty downward closed team property. The familiar classical formulas and formulas in the language of \PD that we studied in our previous paper \cite{VY_PD}  all have the empty team property. To define team properties that do not contain the empty team, we introduce a new atom \nem, called \emph{non-emptiness}, stating that the team in question is nonempty.  To define also other interesting team properties, we now enrich the language of our logic.


\begin{definition}\label{syntax_pt}
Well-formed formulas of the \emph{full propositional  team logic} (\MPT)  are given by the following grammar
\[\begin{split}
    \phi::=& \,p_i\mid \neg p_i\mid \nem\mid\bot\mid p_{i_1}\dots p_{i_k} \perp p_{j_1}\dots p_{j_m}\mid \dep(p_{i_1},\dots, p_{i_k},p_j)\\
    &\mid p_{i_1},\dots, p_{i_k}\subseteq p_{j_1}\dots p_{j_m} \mid(\phi\wedge\phi)\mid(\phi\sor\phi)\mid(\phi\nsor\phi)\mid (\phi\vee\phi)   
\end{split}
\] 
The formulas $p_{i_1}\dots p_{i_k} \perp p_{j_1}\dots p_{j_m}$, $\dep(p_{i_1},\dots, p_{i_k},p_j)$ and $p_{i_1},\dots, p_{i_k}\subseteq p_{j_1}\dots p_{j_m}$ are called the \emph{independence atom}, the \emph{dependence atom} and the \emph{inclusion atom}, respectively. 
The connectives $\sor$, $\nsor$ and $\vee$ are called the  \emph{tensor (disjunction)}, the \emph{nonempty disjunction} and the \emph{Boolean disjunction}, respectively. 
\end{definition}

\begin{definition}\label{TS_PT}
We  define inductively the notion of a formula $\phi$ in the language of \MPT  being \emph{true} on a team $X$, denoted by $X\models\phi$. All the cases are identical to those defined in \Cref{TS_CT} and additionally:
\begin{itemize}
\item $X\models\nem$ iff $X\neq\emptyset$
\item $X\models\, p_{i_1}\dots p_{i_k} \perp p_{j_1}\dots p_{j_m}$ iff for all $s,s'\in X$, there exists $s''\in X$ such that
\[\langle s''(i_1),\dots, s''(i_k)\rangle=\langle s(i_1),\dots, s(i_k)\rangle\]
and
\[\langle s''(j_1),\dots, s''(j_m) \rangle=\langle s'(j_1),\dots, s'(j_m)\rangle\]
\item $X\models\dep( p_{i_1}\dots p_{i_k},p_j)$ iff for all $s,s'\in X$, 
\[\text{if }\langle s(i_1),\dots, s(i_k)\rangle=\langle s'(i_1),\dots, s'(i_k)\rangle,\text{ then }s(j)=s'(j)\]
\item $X\models p_{i_1}\dots p_{i_k}\subseteq p_{j_1}\dots p_{j_k}$ iff for all $s\in X$,  there exists $s'\in X$ such that
\[\langle s(i_1),\dots, s(i_k)\rangle=\langle s'(j_1),\dots, s'(j_k)\]
\item $X\models\phi\varoast\psi$ iff $X=\emptyset$ or there are nonempty $Y$ and $Z$ such that $X=Y\cup Z$, $Y\models\phi$ and $Z\models\psi$
  \item $X\models \phi\bor\psi$ iff $X\models \phi$ or $X\models\psi$
\end{itemize}
We say that a formula $\phi$  is \emph{valid}, denoted by  $\models\phi$, if $X\models\phi$ holds for all teams $X$. 
We say that a formula $\psi$ is a \emph{logical consequence} of a set $\Gamma$ of formulas, written $\Gamma\models\psi$, if for any team $X$ such that $X\models\phi$ for all $\phi\in \Gamma$, we have $X\models\psi$. We also write $\phi\models\psi$ for $\{\phi\}\models\psi$. If $\phi\models\psi$ and $\psi\models\phi$, then we say that $\phi$ and $\psi$ are \emph{semantically equivalent}, in symbols $\phi\equiv\psi$.  

Let $\LL_1$ and $\LL_2$ be two propositional team logics. 
We write $\LL_1\leq \LL_2$ if every formula of $\LL_1$ is semantically equivalent to a formula of $\LL_2$. If $\LL_1\leq \LL_2$ and $\LL_2\leq\LL_1$, then we write $\LL_1\equiv\LL_2$ and say that $\LL_1$ and $\LL_2$ have the \emph{same expressive power}.
\end{definition}


Let us now spend a few moments with the atoms and connectives of \MPT. 

\subsection*{\underline{Independence atom}}

Let us first take a closer look at the independence atoms by considering the team $X$ of \Cref{tab:team}. It can be verified that the independence atom $p_0\perp p_3$ is satisfied by $X$. One may think of the team $X$ as given data about $p_0,p_1,p_2,p_3,\dots$. For example, $p_0,p_1,p_2$ may be propositional variables which tell whether some valves $V_0,V_1,V_2$ controlling gas flow in an industrial process are open (1) or closed (0), and $p_3$ is a propositional variable indicating whether a warning lamp is on (1) or off (0). We can conclude on the basis of the team of \Cref{tab:team} that the lamp is independent of the valve $V_0$. However, the lamp is not completely independent of $V_1$, because if $V_1$ is closed, the lamp is definitely off. Also, the lamp is not entirely independent of the valve $V_2$, because   if $V_2$ is closed, the lamp is again definitely off. 

One way to describe the truth definition of $X\models p_{i_1}\dots p_{i_k} \perp p_{j_1}\dots p_{j_m}$ is to compare it to Cartesian product: 
$X\models \{ p_i : i\in I\}\perp\{ p_j : j\in J\}$ if and only if $$X\upharpoonright I\cup J=(X\upharpoonright I)\times (X\upharpoonright J).$$
This manifests the similarity between our concept of independence and the concept of independence of random variables in statistics.


The implication problem of independence atoms (i.e., the problem of asking whether an independence atom  follows from a set of independence atoms) can be completely axiomatized  by the axioms below, known in database theory as the Geiger-Paz-Pearl axioms (\cite{Geiger-Paz-Pearl_1991}):
\begin{description}
\item[(i)] If $\vec{x}\perp \vec{y}$, then $\vec{y}\perp \vec{x}$.
\item[(ii)] If $\vec{x} \perp \vec{y}$, then $\vec{z}\perp \vec{y}$, where $\vec{z}$ is a subsequence of $\vec{x}$.
\item[(iii)] If $\vec{x}\perp\vec{y}$, then $\vec{u}\perp \vec{v}$, where $\vec{u}$ and  $\vec{v}$ are permutations of $\vec{x}$ and $\vec{y}$, respectively.
\item[(iv)] If $\vec{x}\perp\vec{y}$ and $\vec{x} \vec{y}\perp \vec{z}$, then $\vec{x} \perp  \vec{y} \vec{z}$.
\end{description}


While the downward closure property has a profound influence on  properties of dependence logic as already mentioned, the independence atoms violate this property. For example, in \Cref{tab:team} in the team $X$ the attributes $p_0$ and $p_3$ are independent but in the  subteam $Y=\{s_1,s_2,s_3\}$ they are not. We will see in the sequel that propositional  independence logic and other propositional team logics have a completely different flavor than propositional logics of dependence. 


\subsection*{\underline{Non-emptiness atom}}

Another formula that violates the downward closure property is the very simple atom $\nem$ that we call nonemptieness  which states that the team is nonempty.
An easy inductive proof shows that the \nem-free fragment of \MPT has the empty team property. But often when describing properties of teams, we do want to distinguish between the empty team and the nonempty teams. The atom \nem is introduced exactly for this purpose.

The symbol $\nem$ is a logical symbol, on a par with $\bot$, with no internal structure and no proposition symbols occurring in it. While $\bot$ is generally conceived of as a symbol of contradiction, one may ask what is the intuitive meaning of $\nem$? Does this symbol occur in natural language or in scientific discourse? Let us think of a natural language sentence that has the modality ``might": 
\begin{center}
I \emph{might} come to the party.
\end{center}
Given a nonempty information state (i.e. a team) $X$, this sentence can be characterized as ``there exists a nonempty substate $Y$ in which I indeed come to the party". This ``might'' modality (denoted by $\triangledown$) was  considered by Hella and Stumpf in \cite{HellaStumpf2015} and its team semantics is given by the clause
\begin{itemize}
\item $X\models\triangledown\phi$ iff $X=\emptyset$ or there exists a nonempty team $Y\subseteq X$ such that $Y\models\phi$
\end{itemize}
The ``might'' modality can be expressed in terms of the more basic notion of non-emptiness of a team: 
\[\triangledown\phi\equiv\bot\vee\big((\phi\wedge \nem)\otimes \top\big).\]

\subsection*{\underline{Contradictions and  linear implication}}

In the presence of the non-emptiness \nem \emph{contradiction} has  two variants: the \emph{weak contradiction} $\bot$ that is satisfied only by the empty team and the \emph{strong contradiction} $\bot\wedge\nem$ that is satisfied by no team at all. 

A related logical constant is the \emph{linear implication} $\multimap$ (introduced by Abramsky and V\"{a}\"{a}n\"{a}nen \cite{AbVan09}) that has the semantics 
\begin{itemize}
\item $X\models\phi\multimap\psi$ iff for any team $Y$, if $Y\models\phi$, then $X\cup Y\models\psi$
\end{itemize}
The strong contradiction is easily definable using the linear implication:
\[\bot\wedge\nem\equiv \top\multimap \bot,\]
where  $\top=p_{i_1}\sor\neg p_{i_1}$. The reader is referred to \cite{AbVan09} for details on linear implication. We only remark that in the presence of the downward closure property, we have 
\[\phi\models\psi\iff \emptyset \models\phi\multimap\psi.\] 
In other words, deciding whether $\psi$ is a logical consequence of $\phi$ is reduced to deciding whether the linear implication $\phi\multimap\psi$ is satisfied by the empty team.

 



\subsection*{\underline{Disjunctions}}

Due to the way we define semantics there are more propositional operations than in the case of  classical propositional logic. In particular, disjunction has three different incarnations, namely $\sor$, $\nsor$ and $\vee$. These different forms arise from the difference between considering individual valuations and sets of valuations.

The tensor disjunction $\sor$ generalizes the disjunction of classical propositional logic. The semantics of $\sor$ and other connectives as defined in \Cref{TS_CT,TS_PT} is known in the literature (see e.g.,\cite{Pietro_I/E}) as the \emph{Lax Semantics} (in contrast to the \emph{Strict Semantics}). An easy inductive proof shows that our logic \MPT has the locality property. By contrast, if we replace the clause for tensor disjunction $\sor$ in \Cref{TS_CT} by the corresponding clause under strict semantics (denoted by $\models^s$)
  \begin{itemize}
\item $X\models^s\phi\sor\psi$ iff there exist two \emph{disjoint} subteams $Y,Z\subseteq X$ with $X=Y\cup Z$ such that
  \(Y\models^s\phi\text{ and }Z\models^s\psi\)
\end{itemize}
the logic does not any more satisfy the local property. 
This is because, for instance, for the two valuations $s_0$ and $s_1$ defined in \Cref{tab:team}, we have $\{s_0,s_1\}\models^s(\nem\wedge p_0)\sor(\nem\wedge p_0)$ while $\{s_0\}\not\models^s(\nem\wedge p_0)\sor(\nem\wedge p_0)$, even though $\{s_0\upharpoonright\{0\},s_1\upharpoonright\{0\}\}=\{(0,1)\}=\{s_0\upharpoonright\{0\}\}$. We refer the reader to \cite{Pietro_I/E} for further discussions on the difference between lax and strict semantics.

The nonempty disjunction $\phi\nsor\psi$ was introduced by Raine R\"{o}nnholm (personal communication). 
One can easily verify that $\nsor$ can be defined in terms of the other disjunctions and the non-emptiness:
\[\phi\nsor\psi\equiv \bot\vee\big((\phi\wedge\nem)\sor(\psi\wedge\nem)\big).\]
Moreover, it was observed in \cite{HellaStumpf2015} that in any fragment of \MPT that has the empty team property the might modality $\triangledown$ and the nonempty disjunction $\nsor$ as inter-definable:
\begin{equation}\label{nsor_modality_interdf}
\triangledown\phi\equiv \phi\nsor\top\quad\text{ and }\quad\phi\nsor\psi\equiv(\phi\sor\psi)\wedge (\triangledown\phi\sor\triangledown\psi)
\end{equation}

To understand the meaning of $\nsor$ in natural language, let us think of the sentence (in the context of chess):
\begin{equation}\label{RQ}
 \mbox{Rook or queen was sacrificed in each play.}
\end{equation}
It is clear what it means to say that a nonempty set $X$ of plays satisfies this, and in each play in $X$ either rook or queen  was sacrificed. There is a slight difference in saying
\begin{equation}\label{RQ1}
 \mbox{Rook or queen was sacrificed and both cases occurred in some plays.}
\end{equation}
In our symbolic language, denoting ``Rook was sacrificed" by $\phi$ and 
``queen was sacrificed" by $\psi$, (\ref{RQ}) would be written
$$X\models\mbox{$\phi$ $\otimes$ $\psi$}$$
while (\ref{RQ1}) would be written
$$X\models\mbox{($\phi\wedge\nem$) $\otimes$ ($\psi\wedge\nem$)}\text{ or }X\models\phi\nsor\psi.$$
In a sense, $\phi\varoast\psi$ is an ``honest" disjunction: if the team has anything in it at all, then it is divided between $\phi$ and $\psi$ in the non-trivial way that both get a nonempty subteam. We can think that whoever says (\ref{RQ1}), means that if some plays were actually played, then in some of them a Rook was sacrificed and in some the Queen. To put it in a more general context, the formula $\phi\varoast\psi$ permits a type of ``free choice" by having each disjunct nonvoid. This way the nonempty disjunction $\phi\nsor\psi$ provides more information than the tensor disjunction $\phi\sor\psi$. In particular, uttering a disjunction with a void disjunct is actually less informative than simply stating one of the disjuncts.  

The Boolean disjunction $\vee$ was called \emph{intuitionistic disjunction} in our previous paper \cite{VY_PD} in the context of propositional logics of dependence. In particular, in the presence of the downward closure property the intuitionistic disjunction has the 
\emph{disjunction property}:  
\[\models\phi\vee\psi\text{ implies }\models\phi\text{ or }\models\psi.\]
However, in the absence of the downward closure, 
this property, reminiscent of constructive logic, for $\vee$ fails, since, e.g., $\models\bot\vee\nem$ and $\models (p\nsor \neg p)\vee (p\vee \neg p)$, whereas $\not\models\bot$, $\not\models\nem$, $\not\models p\nsor \neg p$ and $\not\models p\vee\neg p$.

We define the empty disjunction for all three disjunctions as
\[\bigsor\emptyset:=\bot,~\bignsor\emptyset:=\bot\text{ and }\bigbor\emptyset:=\bot\wedge\nem.\]

\vspace{\baselineskip}

\begin{table}[t]
\caption{Propositional team logics}
{\setlength{\tabcolsep}{0.3em}
\begin{center}
\begin{tabular}{|l|c|c|}
\multicolumn{1}{c}{\textbf{Logic}}&\multicolumn{1}{c}{\textbf{Atoms}}&\multicolumn{1}{c}{\textbf{Connectives}}\\\hline\hline
Classical propositional logic (\CPL)&$p_i,\neg p_i,\bot$&$\wedge,\sor$\\\hline
Strong classical propositional logic (\ECL)&$p_i,\neg p_i,\bot,\nem$&$\wedge,\sor$\\\hline
\multirow{2}{*}{Propositional independence logic (\PInd)}&$p_i,\neg p_i,\bot,$&\multirow{2}{*}{$\wedge,\sor$}\\
&$p_{i_1}\dots p_{i_k} \perp p_{j_1}\dots p_{j_m}$&\\\hline
\multirow{2}{*}{Strong propositional independence logic (\PInem)} &$p_i,\neg p_i,\bot,\nem,$&\multirow{2}{*}{$\wedge,\sor$}\\
&$p_{i_1}\dots p_{i_k} \perp p_{j_1}\dots p_{j_m}$&\\\hline
Propositional team logic (\PTw) &$p_i,\neg p_i,\bot$&$\wedge,\nsor,\vee$\\\hline
Strong propositional team logic (\PT) &$p_i,\neg p_i,\bot,\nem$&$\wedge,\sor,\vee$\\\hline
Propositional union closed logic (\PUw) &$p_i,\neg p_i,\bot$&$\wedge,\sor,\nsor$\\\hline
Strong propositional union closed logic (\PU) &$p_i,\neg p_i,\bot,\nem$&$\wedge,\sor,\nsor$\\\hline
\multirow{2}{*}{Propositional inclusion logic (\PInc)} &$p_i,\neg p_i,\bot,$&\multirow{2}{*}{$\wedge,\sor$}\\
&$p_{i_1}\dots p_{i_k}\subseteq p_{j_1}\dots p_{j_k}$&\\\hline
\multirow{2}{*}{Strong propositional inclusion logic (\PIncs)} &$p_i,\neg p_i,\bot,\nem,$&\multirow{2}{*}{$\wedge,\sor$}\\
&$p_{i_1}\dots p_{i_k}\subseteq p_{j_1}\dots p_{j_k}$&\\\hline
\multirow{2}{*}{Propositional dependence logic (\PD)} &$p_i,\neg p_i,\bot,$&\multirow{2}{*}{$\wedge,\sor,\vee$}\\
&$\dep(p_{i_1}\dots p_{i_k},p_j)$&\\\hline
\multirow{2}{*}{Strong propositional dependence logic ($\PD^+$)} &$p_i,\neg p_i,\bot,$&\multirow{2}{*}{$\wedge,\sor,\vee,\multimap$}\\
&$\dep(p_{i_1}\dots p_{i_k},p_j)$&\\\hline
\multirow{4}{*}{Full propositional team logic (\MPT)}&$p_i,\neg p_i,\bot,\nem,$&\multirow{4}{*}{$\wedge,\sor,\nsor,\vee$}\\
&$p_{i_1}\dots p_{i_k} \perp p_{j_1}\dots p_{j_m}$&\\
&$\dep(p_{i_1}\dots p_{i_k},p_j)$&\\
&$p_{i_1}\dots p_{i_k}\subseteq p_{j_1}\dots p_{j_k}$&\\\hline
\end{tabular}
\end{center}
}
\label{tab:logics}
\end{table}%

We are interested in fragments of \MPT that are expressively complete for some nice sets of team properties as defined in \Cref{team_prop_df}. The languages of these fragments are determined  in terms of which atoms and connective are allowed. 
Table \ref{tab:logics} defines the sets of atoms and connectives of the languages of these logics. Apart from \MPT, we consider six other types of propositional logics, namely, classical logic, independence logic, team logic, union closed logic, inclusion logic and dependence logic, each of which has two variants, a weak version that has the empty team property, and a strong version that contains the \nem atom or the linear implication $\multimap$ in its language. The propositional independence logic, propositional inclusion logic and propositional dependence logic we define here are propositional version of their first-order counterparts introduced in \cite{D_Ind_GV,Pietro_I/E,Van07dl}. The propositional team logic defined as in  \Cref{tab:logics} does not directly correspond to the propositional fragment of the first-order team logic studied in \cite{Van07dl,KontinenNurmi2011}, and the reader should not confuse the two logics. The propositional union closed logic is a new logic that was not previously considered in the literature.

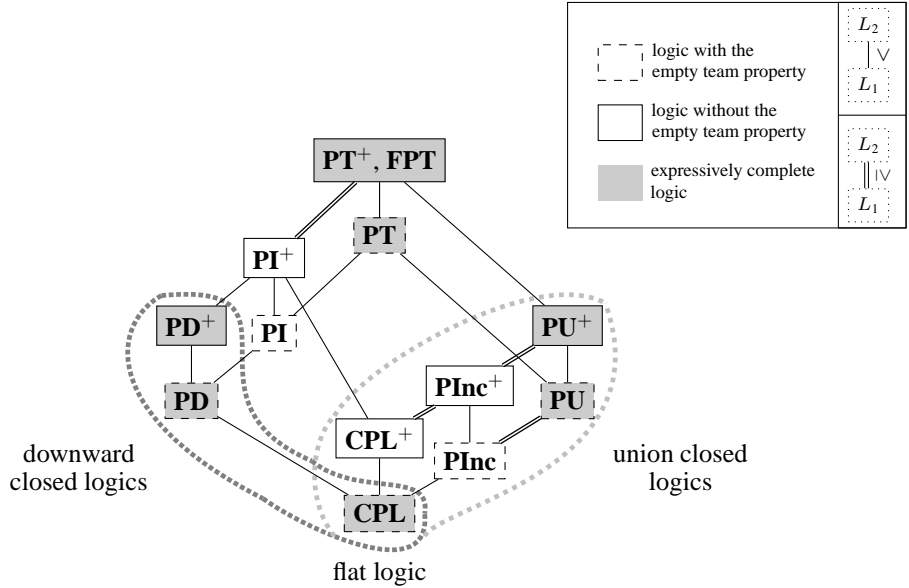
\begin{figure}[t]
\begin{center}
\begin{tikzpicture}[transform shape, scale=1]


\draw[densely dotted, line width=1.8pt, color=gray] (-2.9,2.9) .. controls (-0.8,3.1) and (-2.8,1.5) .. (-0.9,0.7);
\draw[densely dotted, line width=1.8pt, color=gray] (-0.9,0.7) .. controls (-0.3,0.2) and (0.7,0.6) .. (0.6, 0);
\draw[densely dotted,  line width=1.8pt, color=gray] (-2.9,2.9) .. controls (-3.6,2.5) and (-3.7,1.3) .. (-1.55, 0.2);
\draw[densely dotted,  line width=1.8pt, color=gray] (-1.55,0.2) .. controls (-1,-0.2) and (0.7,-1) .. (0.6, 0);

\draw[dotted, line width=1.8pt, color=gray!50] (-0.6,-0.3) .. controls (-1,0.2) and (-0.9,1) .. (-0.6,1.3);
\draw[dotted, line width=1.8pt, color=gray!50] (-0.6,1.3) .. controls (-0.2,2) and (2.1,3.3) .. (3, 2.8);
\draw[dotted, line width=1.8pt, color=gray!50] (0.6,-0.3) .. controls (0.8,0.1) and (3.8,0.6) .. (3, 2.8);

\node[draw, dashed, fill=gray!40] (pd) at (-2.5,1.5) {\PD};
\node[draw,  fill=gray!40] (pds) at (-2.5,2.5) {\PDs};
\node[draw, fill=gray!40] (pu) at (2.5,2.5) {\PU};
\node[draw, dashed, fill=gray!40] (puw) at ( 2.5,1.5) {\PUw};

\node[draw] (pincs) at (1.2,1.7) {\PIncs};
\node[draw, dashed] (pinc) at ( 1.2,0.7) {\PInc};

\node[draw] (ctl) at ( 0,1) {\ECL};
\node[draw] (pis) at (-1.4,3.4) {\PInem};
\node[draw, dashed] (pi) at (-1.4,2.4) {\PInd};
\node[draw, dashed, fill=gray!40] (cpl) at ( 0,0) {\CPC};
\node[draw, fill=gray!40] (pt) at (0,4.7) {\PT, \MPT};
\node[draw, dashed, fill=gray!40] (ptw) at (0,3.7) {\PTw};
\draw (ptw) -- (pt);
\draw (pinc) -- (pincs);
\draw (pt) -- (ptw);
\draw (pd) -- (pi);
\draw (pi) -- (pis);
\draw (pi) -- (ptw);
\draw (cpl) -- (pinc);
\draw (cpl) -- (pd);
\draw (puw) -- (pu);

\draw[double, line width=0.5pt] (pincs) -- (pu);
\draw[double, line width=0.5pt] (pinc) -- (puw);

\draw[double, line width=0.5pt] (ctl) -- (pincs);
\draw (puw) -- (ptw);
\draw (pu) -- (pt);
\draw (ctl) -- (pis);
\draw[double, line width=0.5pt] (pis) -- (pt);
\draw (pds) -- (pis);
\draw (pd) -- (pds);
\draw (cpl) -- (ctl);

\node (fl) at (0,-0.8) {flat logic};




\node at (-4,0.8) {downward}; 
\node at (-4,0.4) {closed logics}; 


\node at (4,0.8) {union closed}; 
\node at (4,0.4) {logics}; 

\node[draw, dashed] at (3.2,6) {\textcolor{white}{12}};
\node at (4.3,6.15) {{\scriptsize logic with the}};
\node at (4.65,5.85) {{\scriptsize empty team property}};

\node[draw] at (3.2,5.2) {\textcolor{white}{12}};
\node at (4.45,5.35) {{\scriptsize logic without the}};
\node at (4.65,5.05) {{\scriptsize empty team property}};

\node[fill=gray!40] at (3.2,4.4) {\textcolor{gray!40}{12}};
\node at (4.7,4.55) {{\scriptsize expressively complete}};
\node at (3.85,4.25) {{\scriptsize logic}};

\node[draw, dotted] (l1a) at (6.5,6.5) {{\scriptsize$L_2$}};
\node[draw, dotted] (l2a) at (6.5,5.7) {{\scriptsize$L_1$}};
\path[draw] (l1a) -- (l2a);

\node at (6.7,6.1) {\rotatebox{90}{{\scriptsize$<$}}};


\node[draw, dotted] (l1b) at (6.5,4.9) {{\scriptsize$L_2$}};
\node[draw, dotted] (l2b) at (6.5,4.1) {{\scriptsize$L_1$}};
\path[draw] (l1b) -- (l2b);
\path[draw] (6.45,4.33) -- (6.45,4.67);

\node at (6.7,4.5) {\rotatebox{270}{{\scriptsize$\geq$}}};


\draw (2.5,3.8) rectangle (7,6.8);

\draw (6.1,3.8) rectangle (7,6.8);
\path[draw] (6.1,5.3) -- (7,5.3);

\end{tikzpicture}
\caption{Expressive power of propositional team logics}
\label{fig:exp_pw}
\end{center}
\end{figure}

\section{Expressive Power and Normal Forms}\label{pisec:nf}

In this section, we study the expressive power of the propositional team logics defined in \Cref{tab:logics} and their normal forms. We will also prove the interconnections illustrated in \Cref{fig:exp_pw}.

To begin with, recall from \cite{VY_PD} that for each team $X$ on a finite set $N=\{i_1,\dots,i_n\}$ of indices, the classical formula 
\[\Theta_X=\bigsor_{s\in X}(p_{i_1}^{s(i_1)}\wedge\dots\wedge p_{i_{n}}^{s(i_{n})})\] 
(in disjunctive normal form)  defines the team $X$ \emph{modulo  subteams},
where $p_i^{1}:=p_i$ and $p_i^0:=\neg p_i$ for any index $i$. That is, for any team $Y$ on $N$,
\begin{equation}\label{ThetaX_prop}
Y\models\Theta_X\iff Y\subseteq X.
\end{equation}
Using this formula, we can prove the expressive completeness of \CPL under the team semantics  for the set of all flat team properties, as illustrated in \Cref{fig:exp_pw}. 
 
 \begin{theorem}\label{cpl_exp_compl}
\CPL characterizes the set of all flat team properties.
\end{theorem}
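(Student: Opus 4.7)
The plan is to prove the two-way containment that constitutes ``characterizes''. For the easy inclusion, that every $\llbracket\phi\rrbracket$ with $\phi(p_{i_1},\dots,p_{i_n})$ a classical formula arises as $\mathsf{P}\upharpoonright N$ for some flat team property $\mathsf{P}$: by Lemma~\ref{cpl_prop} classical formulas enjoy both flatness and locality, and it is immediate to check that $\mathsf{P}:=\{Y\subseteq 2^{\N}:Y\models\phi\}$ is flat and satisfies $\mathsf{P}\upharpoonright N=\llbracket\phi\rrbracket$ (surjectivity onto $\llbracket\phi\rrbracket$ being witnessed by extending each valuation in a team on $N$ to $\N$ by zeros outside $N$ and then applying locality).

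For the non-trivial inclusion, fix an arbitrary flat team property $\mathsf{P}$ and a finite index set $N=\{i_1,\dots,i_n\}$. The main intermediate step I would prove is that $\mathsf{P}\upharpoonright N$ inherits flatness as a team property on $N$. Forward: if $X\in\mathsf{P}\upharpoonright N$ is witnessed by $Y\in\mathsf{P}$ with $Y\upharpoonright N=X$, then every $s\in X$ lifts to some $t\in Y$, and flatness of $\mathsf{P}$ yields $\{t\}\in\mathsf{P}$, whence $\{s\}\in\mathsf{P}\upharpoonright N$. Backward: given that $\{s\}\in\mathsf{P}\upharpoonright N$ for every $s\in X$, select witnesses $Y_s\in\mathsf{P}$ with $Y_s\upharpoonright N=\{s\}$ and set $Y:=\bigcup_{s\in X}Y_s$; each element of $Y$ has its singleton in $\mathsf{P}$, so $Y\in\mathsf{P}$ by flatness, and clearly $Y\upharpoonright N=X$. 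This is the crux of the argument.

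With this in hand, I would define the ``kernel''
$$S\;:=\;\{\,s\in 2^N : \{s\}\in\mathsf{P}\upharpoonright N\,\}.$$
Applying flatness of $\mathsf{P}\upharpoonright N$ gives $X\in\mathsf{P}\upharpoonright N\iff X\subseteq S$, and the classical formula $\Theta_S$ recalled from \cite{VY_PD} supplies the needed definition: by~(\ref{ThetaX_prop}) we have $Y\models\Theta_S\iff Y\subseteq S$, so $\llbracket\Theta_S\rrbracket=\{Y:Y\subseteq S\}=\mathsf{P}\upharpoonright N$, completing the proof. The only subtlety is the degenerate case $S=\emptyset$: here $\Theta_S=\bigsor\emptyset=\bot$ by the paper's convention, and both $\llbracket\bot\rrbracket$ and $\mathsf{P}\upharpoonright N$ reduce to $\{\emptyset\}$ (the latter since flatness vacuously forces $\emptyset\in\mathsf{P}$).
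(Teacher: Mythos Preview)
Your proof is correct and takes essentially the same approach as the paper: both identify the kernel $S=\{s\in 2^N:\{s\}\in\mathsf{P}\upharpoonright N\}$ and show that $\Theta_S$ defines $\mathsf{P}\upharpoonright N$ via~(\ref{ThetaX_prop}). The only difference is that the paper works directly with flat team properties $\mathsf{P}\subseteq\mathcal{P}(2^N)$ on $N$, tacitly identifying $\mathbb{P}_N$ with the set of all flat properties on $N$, whereas you spell out explicitly that the restriction $\mathsf{P}\upharpoonright N$ of a flat property on $\N$ is again flat; this extra care is harmless but not needed for the paper's more streamlined reading of Definition~\ref{expressive_comp_df}.
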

\begin{proof}
Recall from \Cref{expressive_comp_df,team_prop_df} that we need to show that for any finite set $N=\{i_1,\dots,i_n\}$ of indices, every classical formula $\phi(p_{i_1},\dots,p_{i_n})$ defines a flat team property, i.e., $\llbracket \phi\rrbracket$  is flat, and every flat team property $\mathsf{P}\subseteq\mathcal{P}(2^N)$ is definable by some classical formula $\phi(p_{i_1},\dots,p_{i_n})$, i.e., $\llbracket \phi\rrbracket=\mathsf{P}$. The former follows from \Cref{cpl_prop}. We now prove the latter. Let $\mathsf{P}$ be a flat team property and 
$\{s_1\},\dots,\{s_k\}$ all singleton teams  in $\mathsf{P}$. Putting $X=\{s_1,\dots,s_k\}$, we have $\mathsf{P}=\llbracket\Theta_X \rrbracket$, since for any team $Y$ on $N$,
\[Y\models\Theta_X\iff  Y\subseteq X\in \mathsf{P}\iff Y\in \mathsf{P}.\]
\end{proof}


Clearly, from the above theorem it follows that the standard disjunctive normal form of \CPL under the usual single valuation semantics is also a normal form of \CPL under the team semantics.
Another immediate corollary of the expressive completeness of \CPL for the set of all flat properties is that flatness is equivalent to being logically equivalent to a classical formula in propositional team logics, as stated in the theorem below. Flatness was originally introduced by Hodges \cite{Hodges1997b} in the first-order context, and further studied in  \cite{Van07dl}. In the first-order dependence logic case all classical first-order formulas have the flatness property (see, e.g., \cite{Van07dl}) but the converse  is not true. For example, all first-order sentences (i.e., formulas without free variables) have the flatness property for the trivial reason that their truth in any model is decided by the truth in the singleton team $\{\emptyset\}$ of the empty assignment $\emptyset$ alone (see Lemma 1.1.14 in \cite{Yang_dissertation}). 


 \begin{theorem}\label{cpl_char_prop}
Let $\phi$ be a formula in the language of \MPT. The following are equivalent.
\begin{description}
\item[(i)] $\phi$ has the flatness property.
\item[(ii)] $\phi$ is semantically equivalent to a classical formula.
\item[(iii)] $\phi$ has the downward closure property and the union closure property.
\end{description}
\end{theorem}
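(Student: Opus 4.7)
The plan is to argue the cycle (ii)~$\Rightarrow$~(iii)~$\Rightarrow$~(i)~$\Rightarrow$~(ii). The first two implications are quick, while the real content lies in (i)~$\Rightarrow$~(ii), which I will reduce to the expressive completeness of \CPL already proved in \Cref{cpl_exp_compl}.

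The implication (ii)~$\Rightarrow$~(iii) (and in fact (ii)~$\Rightarrow$~(i) directly) is immediate from \Cref{cpl_prop}, which records that every classical formula enjoys flatness, downward closure, and union closure. For (iii)~$\Rightarrow$~(i), I would just unwind definitions: downward closure yields the ``only if'' direction of flatness via $\{s\}\subseteq X$, and for the converse write $X=\bigcup_{s\in X}\{s\}$ and apply union closure to the family $\{\{s\}:s\in X\}$ of satisfying teams. The boundary case $X=\emptyset$ is absorbed by the $\mathcal{X}=\emptyset$ instance of union closure, which gives $\emptyset=\bigcup\emptyset\models\phi$, so flatness also holds vacuously for the empty team.

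The interesting direction is (i)~$\Rightarrow$~(ii). Fix the propositional variables $p_{i_1},\dots,p_{i_n}$ occurring in $\phi$ and set $N=\{i_1,\dots,i_n\}$. By the locality property of \MPT (an easy induction already noted in the paper), the collection $\mathsf{P}=\{X\subseteq 2^N : X\models\phi\}$ is a well-defined team property on $N$, and it inherits flatness from $\phi$. Then \Cref{cpl_exp_compl} produces a classical formula $\psi(p_{i_1},\dots,p_{i_n})$ with $\llbracket\psi\rrbracket=\mathsf{P}$, and invoking locality of both $\phi$ and $\psi$ lifts this equality of team properties on $N$ to the full semantic equivalence $\phi\equiv\psi$ for arbitrary teams on $\mathbb{N}$.

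No step is a genuine obstacle; the combinatorial heavy lifting is entirely absorbed by \Cref{cpl_exp_compl}, and what remains is routine bookkeeping around locality and the empty-team instance of union closure. The only place one needs to be mildly careful is in noticing that the empty family must be allowed in the union closure condition, so that union closure alone already forces the empty team property, which is what makes the implication (iii)~$\Rightarrow$~(i) go through cleanly without a separate side hypothesis.
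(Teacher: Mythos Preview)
Your proof is correct and is essentially the paper's own argument, only spelled out in more detail: the paper simply says that the equivalence of (i) and (ii) follows from \Cref{cpl_exp_compl} and that the equivalence of (i) and (iii) is easy to verify. Your cycle (ii)$\Rightarrow$(iii)$\Rightarrow$(i)$\Rightarrow$(ii) unpacks exactly these two observations, including the point that the empty-family instance of union closure handles the empty team.
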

 \begin{proof}
 The equivalence of (i) and (ii) follows from \Cref{cpl_exp_compl} and the equivalence of (i) and (iii) is easy to verify.
 \end{proof}
 

Our characterization of classical propositional logic can be equivalently formulated as follows: The classical propositional logic cannot be extended in the context of team semantics to a propositional logic which satisfies both the downward closure and the union closure property. This is remotely reminiscent of the characterization of classical first-order logic, known as Lindstr\"om's Theorem \cite{Lindstrom69}, to the effect that classical first-order logic cannot be extended in the context of abstract logics to a logic which satisfies both the Downward L\"owenheim-Skolem Theorem and the Compactness Theorem.

 
Recall  from  \cite{VY_PD}  that \PD is expressively complete for the set of all downward closed team properties which contain the empty team. Having established also the expressive completeness of \CPL for the set of all flat team properties, we now proceed towards the proof of the other expressive completeness results illustrated in \Cref{fig:exp_pw}.

The set of subteams  of a fixed team $X$ on a finite set $N$ is characterized by the classical formula $\Theta_X$ (in the sense of (\ref{ThetaX_prop})). We now show that with the non-emptiness or the nonempty disjunction, a team can  be characterized precisely. 

\begin{lemma}\label{X_ThetaX_nem}
Let $X$ be a team on a finite set $N=\{i_1,\dots,i_{n}\}$ of indices. Define 
\[\Theta^{\ast}_X:=\bigsor_{s\in X}(p_{i_1}^{s(i_1)}\wedge\dots\wedge p_{i_{n}}^{s(i_{n})}\wedge\nem)\quad\text{and}\quad\Theta^{\ast\ast}_X:=\bignsor_{s\in X}(p_{i_1}^{s(i_1)}\wedge\dots\wedge p_{i_{n}}^{s(i_{n})}).\]
For any  team $Y$ on $N$, we have
\begin{description}
\item[(i)] \(Y\models \Theta^\ast_X\iff Y= X.\)
\item[(ii)] \(Y\models \Theta^{\ast\ast}_X\iff Y= X\) or $Y=\emptyset$.
\end{description}
\end{lemma}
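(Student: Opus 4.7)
The plan is to reduce both parts to a single observation: for any valuation $s$ on $N$, the classical conjunction $\chi_s := p_{i_1}^{s(i_1)} \wedge \dots \wedge p_{i_n}^{s(i_n)}$ is satisfied by a team $Y'$ on $N$ precisely when every valuation in $Y'$ agrees with $s$ on $N$, i.e.\ $Y' \subseteq \{s\}$. Combined with the clause for $\nem$, this means $Y' \models \chi_s \wedge \nem$ iff $Y' = \{s\}$. This is immediate from the atomic clauses of the team semantics and does not rely on anything beyond \Cref{TS_CT,TS_PT}.

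For part (i), I would enumerate $X = \{s_1, \ldots, s_k\}$ (disposing of the boundary case $X = \emptyset$ separately, where $\Theta_X^\ast = \bot$ and the claim reduces to the semantic clause for $\bot$). Unfolding the iterated tensor under the lax semantics, $Y \models \Theta_X^\ast$ iff there exist subteams $Y_1, \ldots, Y_k \subseteq Y$ with $Y = Y_1 \cup \dots \cup Y_k$ and $Y_j \models \chi_{s_j} \wedge \nem$ for every $j$. The key observation then forces $Y_j = \{s_j\}$ for each $j$, so $Y = X$. For the converse, given $Y = X$ the decomposition $Y_j := \{s_j\}$ witnesses $Y \models \Theta_X^\ast$.

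For part (ii), the boundary case $X = \emptyset$ is again handled separately: by the convention $\bignsor \emptyset = \bot$, the formula $\Theta_X^{\ast\ast}$ is satisfied only by $Y = \emptyset$, which matches the claim. For $X = \{s_1, \ldots, s_k\}$ with $k \geq 1$, I would argue by a short induction on $k$, using the binary clause for $\nsor$, that $Y \models \Theta_X^{\ast\ast}$ iff either $Y = \emptyset$, or there exist \emph{nonempty} subteams $Y_1, \ldots, Y_k$ with $Y = Y_1 \cup \dots \cup Y_k$ and $Y_j \models \chi_{s_j}$ for every $j$. Applying the key observation once more gives $Y_j = \{s_j\}$ and hence $Y = X$; the converse is witnessed by the same partition $Y_j := \{s_j\}$.

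There is no substantive obstacle here. The only point that requires a small amount of care is the unfolding of the iterated $\nsor$ in part (ii): one must check, by induction on the number of disjuncts, that an iterated nonempty disjunction either is satisfied by the empty team or decomposes the team into pieces each of which is nonempty (rather than only requiring the outermost two pieces to be nonempty). This is routine from the binary clause, after which the rest of both parts is formal.
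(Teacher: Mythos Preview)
Your proposal is correct and follows essentially the same approach as the paper: handle the $X=\emptyset$ boundary case, unfold the (iterated) disjunction to obtain a decomposition $Y=\bigcup_j Y_j$, and use the observation that $Y_j\models\chi_{s_j}\wedge\nem$ (respectively $Y_j\models\chi_{s_j}$ with $Y_j\neq\emptyset$) forces $Y_j=\{s_j\}$. Your explicit induction on $k$ to unfold the iterated $\nsor$ is a detail the paper leaves implicit (it simply says ``by a similar argument''), but otherwise the arguments coincide.
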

\begin{proof}
The direction ``$\Longleftarrow$'' for both items is obvious. For the direction ``$\Longrightarrow$'' of   item (i), suppose $Y\models \Theta^\ast_X$. If $X=\emptyset$, then $\Theta^\ast_X= \bot$, hence $Y=\emptyset=X$. Otherwise, for each $s\in X$ there exists a set $Y_s$ such that 
\[Y=\bigcup_{s\in X}Y_s\text{ and }Y_s\models p_{i_1}^{s(i_1)}\wedge\dots\wedge p_{i_n}^{s(i_n)}\wedge\nem.\]
Clearly, $Y_s=\{s\}$ for each $s\in X$, implying $Y= X$.

For the direction ``$\Longrightarrow$'' of  item (ii), suppose $Y\models \Theta^{\ast\ast}_X$. If $Y=\emptyset$, then we are done. Otherwise, $Y=X$ is proved by a similar argument to  that of item (i).
\end{proof}



Next, we prove that many of the logics we defined in Section 2 (see \Cref{tab:logics}) are expressively complete for certain sets of team properties (see \Cref{expressive_comp_df,team_prop_df} for definitions of the relevant notions).


\begin{theorem}\label{PT_exp_pw} 
\begin{description}
\item[(i)] Both \PT and \MPT  characterize the set of all team properties, i.e., every team property is definable in the logics. In particular, $\PT\equiv \MPT$.
\item[(ii)] \PTw characterizes the set of all team properties which contain the empty team.
\item[(iii)] \PUw characterizes the set of all union closed team properties  which contain the empty team.
\item[(iv)] \PU characterizes the set of all union closed team properties.
\item[(v)] \PD characterizes the set of all downward closed team properties  which contain the empty team. 
\item[(vi)] $\PD^+$ characterizes the set of all downward closed team properties.
\end{description}
\end{theorem}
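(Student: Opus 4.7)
The argument for each part splits into a closure direction (every formula defines a team property with the stated closure properties) and an expressive completeness direction (every such team property is definable). The closure direction proceeds by a straightforward induction on formulas. For $\PTw$, each of $\wedge, \nsor, \vee$ preserves the empty team property (the $\nsor$-clause explicitly admits $X=\emptyset$). For $\PUw$ and $\PU$, union closure is preserved by each connective: given $\mathcal{X}\subseteq \llbracket\phi\sor\psi\rrbracket$ with splittings $X=Y_X\cup Z_X$ for each $X\in\mathcal{X}$, the induction hypothesis gives $\bigcup_X Y_X\models\phi$ and $\bigcup_X Z_X\models\psi$, so $\bigcup\mathcal{X}\models\phi\sor\psi$; the case of $\nsor$ is analogous and $\wedge$ is immediate. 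For $\PD^+$, the linear implication preserves downward closure: if $X\models\phi\multimap\psi$ and $X'\subseteq X$, then for any $Y\models\phi$ we have $X'\cup Y\subseteq X\cup Y\models\psi$, whence $X'\cup Y\models\psi$ by downward closure of $\psi$.

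The expressive completeness direction systematically uses the three characteristic formulas from \Cref{X_ThetaX_nem}. Since $N$ is finite, $\mathsf{P}\subseteq\mathcal{P}(2^N)$ is finite, so all the disjunctions below are well-formed. For (i), setting $\phi_\mathsf{P}:=\bigvee_{X\in\mathsf{P}}\Theta_X^{\ast}$ (with the convention $\bigvee\emptyset:=\bot\wedge\nem$ for the degenerate case $\mathsf{P}=\emptyset$) defines $\mathsf{P}$, since each $\Theta_X^{\ast}$ isolates exactly $\{X\}$; this formula lies in $\PT$, and $\PT\equiv\MPT$ then follows because every $\MPT$-formula defines some team property, which is in turn $\PT$-definable. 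For (ii), since $\PTw$ lacks $\nem$ but $\mathsf{P}$ contains $\emptyset$ by assumption, I would take $\phi_\mathsf{P}:=\bigvee_{X\in\mathsf{P}}\Theta_X^{\ast\ast}$: each $\Theta_X^{\ast\ast}$ defines $\{X,\emptyset\}$, and the incidental $\emptyset$'s are absorbed into $\mathsf{P}$.

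For (iii) and (iv) I would replace the Boolean disjunction by the tensor disjunction, which generates unions of models. For union-closed $\mathsf{P}$ with $\emptyset\in\mathsf{P}$ (case (iii)), the formula $\phi_\mathsf{P}:=\bigsor_{X\in\mathsf{P}}\Theta_X^{\ast\ast}$ works: a team $Y$ satisfies it iff $Y=\bigcup_{X\in\mathsf{P}}Y_X$ with each $Y_X\in\{X,\emptyset\}$, so $Y$ is a union of a subfamily of $\mathsf{P}$ and hence lies in $\mathsf{P}$ by union closure; conversely any $X\in\mathsf{P}$ is obtained by taking $Y_X=X$ and $Y_{X'}=\emptyset$ for $X'\neq X$. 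For case (iv), where $\emptyset$ may not be in $\mathsf{P}$, conjoining with $\nem$ yields $\phi_\mathsf{P}:=\nem\wedge\bigsor_{X\in\mathsf{P}}\Theta_X^{\ast\ast}$. Part (v) is the main result of \cite{VY_PD}, and (vi) reduces to (v) in the nontrivial case, since any nonempty downward closed $\mathsf{P}$ contains $\emptyset$; the only new case $\mathsf{P}=\emptyset$ is defined in $\PD^+$ by the unsatisfiable formula $\top\multimap\bot$. The main point requiring care is the bookkeeping across the six fragments: each restricts which of $\Theta_X,\Theta_X^{\ast},\Theta_X^{\ast\ast}$ lies in its language, and the degenerate cases ($\mathsf{P}=\emptyset$, $X=\emptyset$) must be handled uniformly via the empty-disjunction conventions.
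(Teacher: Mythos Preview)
Your proposal is correct and follows essentially the same approach as the paper: the closure direction is by induction on formulas (the paper leaves this as ``easy to show by induction''), and the expressive completeness direction uses exactly the same defining formulas $\bigvee_{X\in\mathsf{P}}\Theta_X^\ast$, $\bigvee_{X\in\mathsf{P}}\Theta_X^{\ast\ast}$, $\bigsor_{X\in\mathsf{P}}\Theta_X^{\ast\ast}$, $\nem\wedge\bigsor_{X\in\mathsf{P}}\Theta_X^{\ast\ast}$, and $\top\multimap\bot$ in the respective cases. One small point worth stating explicitly: in (iv) the formula $\nem\wedge\bigsor_{X\in\mathsf{P}}\Theta_X^{\ast\ast}$ is used only when $\emptyset\notin\mathsf{P}$, while the case $\emptyset\in\mathsf{P}$ falls back to (iii); your phrasing suggests this but the paper makes the case split explicit.
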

\begin{proof}
Let $N=\{i_1,\dots,i_n\}$ be an arbitrary finite set of indices. 

(i) Obviously every formula $\phi(p_{i_1},\dots,p_{i_n})$ in the language of \PT or \MPT defines a team property on $N$, i.e., $\llbracket \phi\rrbracket\subseteq \mathcal{P}(2^N)$. Conversely, for any team property  $\mathsf{P}\subseteq \mathcal{P}(2^N)$, we shall show  $\mathsf{P}=\llbracket \bigbor_{X\in \mathsf{P}}\Theta^\ast_X\rrbracket$. 

By \Cref{X_ThetaX_nem}(i), for any team $Y$ on $N$, 
\[Y\models\bigbor_{X\in \mathsf{P}}\Theta^\ast_X\iff \exists X\in \mathsf{P}(Y= X)\iff Y\in \mathsf{P}.\]
 In particular, if $\mathsf{P}=\emptyset$, then $\llbracket \bigbor_{X\in \emptyset}\Theta^\ast_X\rrbracket=\llbracket\bot\wedge\nem\rrbracket=\emptyset$.
 
(ii) Since formulas $\phi(p_{i_1},\dots,p_{i_n})$  in the language of \PTw have the empty team property, we have $\emptyset\in\llbracket \phi\rrbracket\subseteq \mathcal{P}(2^N)$. Conversely, for any  team property  $\mathsf{P}\subseteq \mathcal{P}(2^N)$ with $\emptyset\in \mathsf{P}$, we show $\mathsf{P}=\llbracket \bigbor_{X\in \mathsf{P}}\Theta^{\ast\ast}_X\rrbracket$. 

By \Cref{X_ThetaX_nem}(ii), for any team $Y$ on $N$, 
\[Y\models\bigbor_{X\in \mathsf{P}}\Theta^{\ast\ast}_X\iff \exists X\in \mathsf{P}(Y= X)\text{ or }Y=\emptyset\iff Y\in \mathsf{P}.\]

(iii) It is easy to show by induction that every formula $\phi$ in the language of \PUw has the union closure property and the empty team property, which imply that $\llbracket \phi\rrbracket$ is a union closed team property that contains the empty team. Conversely, for any  union closed team property  $\mathsf{P}\subseteq \mathcal{P}(2^N)$ with $\emptyset\in\mathsf{P}$, we show that $\mathsf{P}=\llbracket \bigsor_{X\in \mathsf{P}}\Theta^{\ast\ast}_X\rrbracket$.

 If $Y\in \mathsf{P}$, then, by \Cref{X_ThetaX_nem}(ii) we have  $Y\models\Theta^{\ast\ast}_Y$ and $Y\models \bigsor_{X\in \mathsf{P}}\Theta^{\ast\ast}_X$. Conversely, if $Y\models \bigsor_{X\in \mathsf{P}}\Theta^{\ast\ast}_X$, then for each $X\in\mathsf{P}$ there is $Y_X\subseteq Y$ such that $Y=\bigcup_{X\in\mathsf{P}}Y_X$ and $Y_X\models \Theta^{\ast\ast}_X$. By  \Cref{X_ThetaX_nem}(ii), we have $Y_X=\emptyset$ or $Y_X=X$ for each $X\in \mathsf{P}$. Since $\emptyset\in \mathsf{P}$ and $\mathsf{P}$ is a union closed team property, we conclude that $Y\in \mathsf{P}$.

(iv) 
Obviously $\llbracket\nem\rrbracket$ is a union closed team property. Thus, by item (iii), for every formula $\phi$ in the language of \PU, $\llbracket \phi\rrbracket$ is a  union closed team property. Conversely, for any union closed team property  $\mathsf{P}\subseteq \mathcal{P}(2^N)$, we show that  $\mathsf{P}$ is definable by some formula $\phi$ in the language of \PU. 

If $\mathsf{P}$ contains the empty team, then  $\mathsf{P}=\llbracket\bigsor_{X\in \mathsf{P}}\Theta^{\ast\ast}_X\rrbracket$ by item (iii). If $\emptyset\notin \mathsf{P}$, then it is easy to verify that $\mathsf{P}=\llbracket \nem\wedge\bigsor_{X\in \mathsf{P}}\Theta^{\ast\ast}_X\rrbracket$.

(v) This item is a consequence of results in \cite{VY_PD}. Note that in \cite{VY_PD}, propositional dependence logic and some of its variants (including propositional inquisitive logic) are all shown to be expressively complete for the set of all downward closed team properties which contain the empty team. With a slight abuse of notation, we denote in this paper by \PD any of these equivalent logics.


(vi) It is easy to show, by induction, that every formula $\phi$ in the language of $\PD^+$ has the downward closure property, which implies that $\llbracket \phi\rrbracket$ is a downward closed team property. Conversely, for any downward closed team property  $\mathsf{P}\subseteq \mathcal{P}(2^N)$, we show that  $\mathsf{P}$ is definable by some formula in the language of \PU. 

If $\mathsf{P}$ is a downward closed team property that contains the empty set, then by item (v), we know that  $\mathsf{P}$ is definable by some formula in the language of \PD (thus also in the language of $\PD^+$). If $\emptyset\notin \mathsf{P}$, then since $ \mathsf{P}$ is a downward closed team property, we must have $ \mathsf{P}=\emptyset$. Clearly, $\mathsf{P}=\emptyset=\llbracket \top\multimap\bot\rrbracket$.
\end{proof}

Results in \Cref{PT_exp_pw,cpl_exp_compl} are illustrated in \Cref{fig:exp_pw}, where all those expressively complete logics  are represented in shaded rectangles labeled with their corresponding characteristic team properties. The logics \MPT, \PT and \PTw do not have a label in  \Cref{fig:exp_pw}, as they characterize the set of arbitrary team properties (with or without the empty team). Except for the flat team property, each characteristic team property we study here has two variants. One with the empty team in the property  and the other one without this constraint. We have an  expressively complete logic for certain set of  team properties that has the empty team property (indicated by a solid rectangle) and an expressively complete logic for the same set of team properties without the empty team property (indicated by a dashed rectangle).



We remarked in Section~\ref{premin} that in any propositional team logic with the empty team property, the nonempty disjunction $\nsor$ and the might modality $\triangledown$ are inter-definable (Equation (\ref{nsor_modality_interdf})). Consequently, classical propositional logic extended with the might modality $\triangledown$ has the same expressive power as \PUw and thereby is also expressively complete for the set of all union closed team properties which contain the empty team. 


Propositional independence logic \PI is expressively properly included in the expressively strongest logic \PT. In particular, the independence atoms are definable in \PT: 
\begin{equation}\label{indatm_df}
p_{i_1}\dots p_{i_k} \perp p_{j_1}\dots p_{j_m}\equiv\bigvee_{X\in \mathcal{X}_{I,J}}\Theta_{X}^\ast,
\end{equation}
 where $I=\{i_1,\dots,i_k\}$, $J=\{j_1,\dots,j_m\}$ and
\[\mathcal{X}_{I,J}=\{X\subseteq \{0,1\}^{I\cup J}\mid X=(X\upharpoonright I)\times (X\upharpoonright J)\}.\] 
The expressive power of \PI is an open problem.  

Another immediate consequence of the expressive completeness of \PT for the set of all team properties is that  all the possible atoms and all the instances of all the possible connectives are expressible in the expressively strongest logic  \PT. Consider the \emph{Boolean  negation}  $\sim$ defined as 
\begin{itemize}
\item $X\models\sim\!\phi$ iff $X\not\models\phi$
\end{itemize}
Clearly $\nem\equiv \sim\!\bot$ and $\phi\vee\psi\equiv\sim(\sim\phi\wedge\sim\psi)$. Thus  \ECL extended with the Boolean negation $\sim$ has the same expressive power as \PT. See \cite{Luck2016} for other properties of the Boolean negation $\sim$ and a complete axiomatization for propositional dependence logic extended with $\sim$. In this paper we will restrict our attention  to the considerably simpler logical constant \nem instead of $\sim$. Note that from the equivalence of the two logics, we only derive that \emph{every instance} of $\sim \phi$ is expressible in \PT, but Boolean negation turns out to be not \emph{uniformly definable} in \PT (see \cite{Yang15uniform}).

From the proofs of \Cref{cpl_exp_compl} and Theorem \ref{PT_exp_pw} we obtain interesting disjunctive normal forms for the logics, as listed in (the self-explanatory) \Cref{tb:nf}. 
It is worth taking note of the many similarities and at the same time the subtle differences of these normal forms.


\begin{table}[t]
\begin{center}
\begin{tabular}{|l|c|}
\hline
Logic&Normal Form\\\hline
\PT& \multirow{2}{*}{$\displaystyle\bigbor_{f\in F}\bigsor_{s\in X_f}(p_{i_1}^{s(i_1)}\wedge\dots\wedge p_{i_{n}}^{s(i_{n})}\wedge\nem)$}\\
 \MPT&\\\hline
\PTw& $\displaystyle\bigbor_{f\in F}\bignsor_{s\in X_f}(p_{i_1}^{s(i_1)}\wedge\dots\wedge p_{i_{n}}^{s(i_{n})})$\\\hline
\multirow{2}{*}{\PU}& either $\displaystyle\nem\wedge\bigsor_{f\in F}\bignsor_{s\in X_f}(p_{i_1}^{s(i_1)}\wedge\dots\wedge p_{i_{n}}^{s(i_{n})})$ or $\displaystyle\bigsor_{f\in F}\bignsor_{s\in X_f}(p_{i_1}^{s(i_1)}\wedge\dots\wedge p_{i_{n}}^{s(i_{n})})$\\\hline
\PUw& $\displaystyle\bigsor_{f\in F}\bignsor_{s\in X_f}(p_{i_1}^{s(i_1)}\wedge\dots\wedge p_{i_{n}}^{s(i_{n})})$\\\hline
\PD& $\displaystyle\bigbor_{f\in F}\bigsor_{s\in X_f}(p_{i_1}^{s(i_1)}\wedge\dots\wedge p_{i_{n}}^{s(i_{n})})$\\\hline
$\PD^+$& either $\displaystyle\bigbor_{f\in F}\bigsor_{s\in X_f}(p_{i_1}^{s(i_1)}\wedge\dots\wedge p_{i_{n}}^{s(i_{n})})$ or $\top\multimap\bot$ \\\hline
\CPL& $\displaystyle\bigsor_{s\in X_f}(p_{i_1}^{s(i_1)}\wedge\dots\wedge p_{i_{n}}^{s(i_{n})})$\\\hline
\end{tabular}
\end{center}
\caption{Normal forms of propositional team logics}
\label{tb:nf}
\end{table}%

We now prove the results in \Cref{fig:exp_pw} concerning the comparison of the logics in terms of their expressive powers. In \Cref{fig:exp_pw}, the logics placed in the same rectangle have the same expressive power. If a line connects two sets of logics, then the logics $\LL_2$ positioned above are expressively strictly stronger than the logics $\LL_1$ positioned below, i.e., $\LL_1< \LL_2$. If instead, $\LL_1$ and $\LL_2$ are connected by a double line, then only $\LL_1\leq \LL_2$ is known. As discussed already, the logic \PT and \MPT are expressively complete for the set of all team properties, thus they are both the expressively strongest logics. The logic \PTw is expressively complete for the set of all team properties which contain the empty set, therefore it has stronger expressive power than the two logics \PUw and \PI that characterize certain team properties which contain the empty set. For any other pair of logics that are linked by a solid line or a double line in  \Cref{fig:exp_pw}, if the one that is positioned above is an extension of the other, then it obviously has stronger expressive power. The logics \CPL, \PUw, \PD, \PI and \PTw that have the empty team property are strictly weaker than their corresponding logics \ECL, \PU, \PDs, \PInem and \PT that do not have the empty team property, respectively. \PD is strictly stronger than \CPL because classical formulas have the flatness property, while \PD has formulas that lack the flatness property (see \cite{VY_PD} for detail). The union closed logics \ECL, \PUw and \PU are expressively different from the non-union closed logics \PInem, \PTw and \PT, respectively. For instance, the formulas $p_0\perp p_1$ and $p_0\vee p_1$ are not closed under unions. Similarly, the downward closed logics \PD and \PDs are expressively different from the non-downward closed logics \PI and \PInem, respectively. For instance, the formula $p_0\perp p_1$  is not closed downward.  

It now remains to prove that \PD and \PDs are expressively weaker than \PI and \PInem, respectively. This reduces to showing that dependence atoms are expressible in \PI and in \PInem. First, note that the independence atoms $\vec{x}\perp \vec{y}$ are known as \emph{unconditional} independence atoms in the literature of independence logic. A \emph{conditional} independence atom \cite{D_Ind_GV} is written as $\vec{x} \perp_{\vec{z}} \vec{y}$ and its semantics is defined by the clause
\begin{itemize}
\item $X\models\, p_{j_1}\dots p_{j_b} \perp_{p_{i_1}\dots p_{i_a} } p_{k_1}\dots p_{k_c}$ iff for all $s,s'\in X$ with $s(\vec{i})=s'(\vec{i})$, there exists $s''\in X$ such that
\[s''(\vec{i})=s(\vec{i})=s'(\vec{i}),\quad s''(\vec{j})=s(\vec{j})\quad\text{and}\quad s''(\vec{k})=s'(\vec{k}),\]
where $\vec{i}=i_1\dots i_a$, $\vec{j}=j_1\dots j_b$ and $\vec{k}=k_1\dots k_c$.
\end{itemize}
In our setting conditional independence atoms are expressible in terms of unconditional ones:
\[\displaystyle \vec{x} \perp_{p_{i_1}\dots p_{i_a}} \vec{y}\equiv\bigsor_{s\in 2^I}\big(p_{i_1}^{s(i_1)}\wedge\dots\wedge p_{i_a}^{s(i_a)}\wedge( \vec{x} \perp \vec{y})\big),\]
where $I=\{i_1,\dots,i_a\}$. As observed already in \cite{D_Ind_GV} in the context of first-order logic, dependence atoms are definable in terms of conditional independence atoms: 
\[\dep(\vec{x},p_{i})\equiv p_{i}\perp_{\vec{x}}p_{i}.\]
Putting these altogether, we conclude that dependence atoms are expressible in \PI and in \PInem. This completes the proof of the inter-relationships shown in \Cref{fig:exp_pw}.

%

\section{Metalogical properties}\label{sec:meta_prop}


In this section, we study metalogical properties of propositional team logics. We will prove that propositional team logics are compact, and  closed under classical substitutions.


\subsection{Compactness}


Propositional inquisitive logic was shown to be compact in  \cite{InquiLog}. As a consequence, propositional logics of dependence  are all compact (see Theorem 3.3 in \cite{VY_PD}). Moreover, from the same argument as given in \cite{ivano_msc} the compactness of all the other propositional team logics follows as well.  Below we present a  sketch of this proof (which makes essential use of K\"{o}nig's Lemma) using the terminologies of this paper for the benefit  of the reader.

\begin{theorem}[Compactness Theorem]\label{compactness_PI}
For any set $\Gamma\cup\{\phi\}$ of formulas  in the language of an arbitrary propositional team logic, if $\Gamma\models\phi$, then there exists a finite set $\Gamma_0\subseteq\Gamma$ such that $\Gamma_0\models\phi$. 
\end{theorem}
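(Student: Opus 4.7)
The plan is to argue by contraposition. Assume that no finite $\Gamma_0 \subseteq \Gamma$ satisfies $\Gamma_0 \models \phi$; I will construct a single team $X^{\ast}$ with $X^{\ast} \models \Gamma$ and $X^{\ast} \not\models \phi$. For each finite $\Gamma_0 \in \mathcal{P}_{\mathrm{fin}}(\Gamma)$, fix a witness team $Y_{\Gamma_0}$ with $Y_{\Gamma_0} \models \Gamma_0$ and $Y_{\Gamma_0} \not\models \phi$. Enumerate the propositional variables occurring in $\Gamma \cup \{\phi\}$ as $p_{k_0}, p_{k_1}, \dots$; if this set is finite the theorem is trivial, since then (by the Locality Property) there are only finitely many teams to consider and $\Gamma \models \phi$ reduces to a finite implication. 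So assume the set of variables is infinite and write $N_n = \{k_0, \dots, k_{n-1}\}$.

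The core of the argument is a finitely branching tree $T$, following the pattern of \cite{ivano_msc}. Define $T_n$ to be the set of teams $X$ on $N_n$ with the property that for every $\Gamma_0 \in \mathcal{P}_{\mathrm{fin}}(\Gamma)$ there exists a team $Y$ with $Y \upharpoonright N_n = X$, $Y \models \Gamma_0$, and $Y \not\models \phi$. Edges connect $X \in T_{n+1}$ to its restriction $X \upharpoonright N_n$, which is readily seen to lie in $T_n$. Nonemptiness of each $T_n$ follows from a pigeonhole argument on the directed set $(\mathcal{P}_{\mathrm{fin}}(\Gamma), \subseteq)$: the assignment $\Gamma_0 \mapsto Y_{\Gamma_0} \upharpoonright N_n$ takes values in the finite set $\mathcal{P}(2^{N_n})$, so some single team $X$ must be realized by a cofinal collection of $\Gamma_0$, and any such $X$ belongs to $T_n$.

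König's Lemma then yields an infinite branch $X_0, X_1, X_2, \dots$ in $T$ with $X_n \in T_n$ and $X_{n+1} \upharpoonright N_n = X_n$. Set
\[
X^{\ast} := \{s \in 2^{\mathbb{N}} : s \upharpoonright N_n \in X_n \text{ for every } n \in \mathbb{N}\}.
\]
The main technical obstacle is to verify the key identity $X^{\ast} \upharpoonright N_n = X_n$ for every $n$; the nontrivial inclusion requires, given $t \in X_n$, constructing a full valuation $s$ with $s \upharpoonright N_m \in X_m$ for all $m \geq n$, which I will obtain by a second application of König's Lemma to the finitely branching tree of compatible extensions of $t$ along the branch $X_n, X_{n+1}, \dots$ Once the identity is in place, the Locality Property (satisfied by every logic in this paper, as noted for \MPT\ in Section~\ref{premin}) delivers the conclusion cleanly: each $\psi \in \Gamma$ has its variables inside some $N_m$, and applying the defining property of $T_m$ with $\Gamma_0 = \{\psi\}$ yields a $Y$ with $Y \upharpoonright N_m = X_m$ and $Y \models \psi$, whence $X_m \models \psi$ and $X^{\ast} \models \psi$ by locality; meanwhile, the variables of $\phi$ lie in some $N_n$, and the defining property of $T_n$ applied to $\Gamma_0 = \emptyset$ provides a $Y$ with $Y \upharpoonright N_n = X_n$ and $Y \not\models \phi$, forcing $X_n \not\models \phi$ and hence $X^{\ast} \not\models \phi$. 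This contradicts $\Gamma \models \phi$ and completes the proof.
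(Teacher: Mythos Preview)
Your proof is correct and follows the same K\"onig's-Lemma tree argument as the paper. The paper's setup is slightly leaner---it enumerates $\Gamma$ as $\{\theta_k\}_{k\in\mathbb{N}}$ and lets $N_k$ be the variables in $\theta_1,\dots,\theta_k,\phi$, so that level $k$ of the tree is simply $\{X \subseteq 2^{N_k} : X \models \theta_1 \wedge \dots \wedge \theta_k,\ X \not\models \phi\}$, nonempty directly by hypothesis (avoiding your pigeonhole step)---while your version is more explicit about building the limit team from the branch via a second K\"onig's Lemma, a step the paper leaves as an unproved claim.
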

\begin{proof}

Let $\Gamma=\{\theta_k\mid k\in\mathbb{N}\}$. For each $k\in\mathbb{N}$, define $\gamma_k=\theta_1\wedge\dots\wedge\theta_k$. It is sufficient to show that $\gamma_k\models\phi$ for some $k\in\mathbb{N}$. Towards a contradiction, assume otherwise. Then for each $k\in\mathbb{N}$, there exists a team $X\subseteq 2^{\mathbb{N}}$ such that $X\models\gamma_k$ and $X\not\models\phi$. Let $N_k$ be the set of all indices of all propositional variables occurring in $\gamma_k$ and $\phi$. By the locality property of propositional team logics, the finite set 
\[\mathcal{X}_k=\{X\subseteq 2^{N_k}:X\models\gamma_k\text{ and }X\not\models\phi\}\] 
is nonempty. Put $T=\{\emptyset\}\cup\bigcup_{k\in\mathbb{N}}\mathcal{X}_k$. Define a relation $\leq$ on $T$ by putting 
\begin{itemize}
\item $\emptyset\leq X$ for all $X\in T$;
\item $X\leq Y$ iff $Y\upharpoonright \rm{dom}(X)=X$.
\end{itemize}
It is not hard to see that $(T,\leq)$ is a finitely branching infinite tree. By K\"{o}nig's Lemma, the tree has an infinite branch $\langle X_k\mid k\in\mathbb{N}\rangle$, where $X_k\in\mathcal{X}_k$ for each $k\in\mathbb{N}$. Putting $N=\bigcup_{k\in\mathbb{N}}N_k$, this infinite branch determines a team $X\subseteq 2^N$ in such a way that $X\upharpoonright N_k=X_k$ for each $k\in\mathbb{N}$. Clearly, $X\not\models\phi$ and $X\models \gamma_k$ for each $k\in\mathbb{N}$. These contradict $\Gamma\models\phi$. 
\end{proof}

\subsection{Closure under classical substitutions}






A \emph{substitution} of a propositional team logic \LL is a mapping $\sigma$ from the set ${\rm Form}_\LL$ of all well-formed formulas of \LL into the set ${\rm Form}_\LL$ itself that commutes with the connectives and atoms of \LL. We say that \LL is \emph{closed under} the substitution $\sigma$,
if for any set $\Gamma\cup\{\phi\}$ of formulas of \LL,
\[\Gamma\models\phi\Longrightarrow \{\sigma(\gamma)\mid \gamma\in \Gamma\}\models\sigma(\phi).\]
If \LL is closed under all substitutions, then we say that \LL is \emph{closed under uniform substitution}.
 The logics \MPT, \PTw, \PT, \PD, $\PD^+$, \PI and \PInem  are not closed under uniform substitution, because, for instance, we have 
\[p_i\sor p_i\models p_i\text{ and }p_i\nsor p_i\models p_i,\] 
whereas 
\[\dep(p_i)\sor \dep(p_i)\not\models \dep(p_i),\quad(p_i\perp p_i)\sor (p_i\perp p_i)\not\models p_i\perp p_i,\]
\[(p_i\vee\neg p_i)\sor(p_i\vee\neg p_i)\not\models(p_i\vee\neg p_i)\text{ and }(p_i\vee\neg p_i)\nsor(p_i\vee\neg p_i)\not\models(p_i\vee\neg p_i).\]
The logics \PU and \PUw are not closed under uniform substitution either (for at least the trivial reason that strings of the form $\neg (\phi\nsor\psi)$ are not well-formed formulas), but  nontrivial counter-examples of the above kind for the two logics are yet to be found. 
It was shown in  \cite{ivano_msc} and \cite{IemhoffYang15} that propositional logics of dependence are, nevertheless, closed under \emph{flat substitutions}, i.e., substitutions $\sigma$ such that $\sigma(p)$ has the flatness property for any propositional variable $p$. Using the method in \cite{IemhoffYang15}, we will prove in this section that  propositional team logics are closed under \emph{classical substitutions}, i.e., substitutions $\sigma$ such that $\sigma(p)$ is a classical formula (i.e., a formula in the language of \CPL) for any propositional variable $p$.

Let us start by examining in detail the notion of substitution in our logics.
The well-formed formulas of the propositional team logics we consider in this paper are assumed to be in \emph{negation normal form} and we do not allow arbitrary formulas to occur in a dependence or independence atom. Strings of the form $\neg\phi$, $\vec{\phi}\perp\vec{\psi}$, $\dep(\vec{\phi},\psi)$ and $\vec{\phi}\subseteq\vec{\psi}$ are not necessarily well-formed formulas of our logics. As such, the notion of substitution is actually not well-defined in our logics. 
To derive our intended closure under substitution result we will then first need to seek for ways to make sense of the notion of substitution in our logics. In general, there are two possible solutions: either to expand the languages of the logics so as to allow more well-formed formulas, or to restrict the range of a substitution to a subset of the full set $ {\rm Form}_\LL$ of well-formed formulas. We will take both approaches at the same time.  
We will confine ourselves to classical substitutions only and will also expand the language of our logics to include every substitution instance $\sigma(\phi)$ of a classical substitution $\sigma$ to be a well-formed formula. Our reason for restricting attention to classical substitutions only is twofold. Conceptually, we do not have a good intuition of the intended semantics of the formulas $\neg(\phi\nsor\psi)$ and $\neg(\phi\vee\psi)$ or of the dependence and independence atoms $\vec{\phi}\perp\vec{\psi}$, $\dep(\vec{\phi},\psi)$ and $\vec{\phi}\subseteq\vec{\psi}$ with arbitrary arguments. Technically, arbitrary substitutions are not very interesting, as the logics are not closed under uniform substitution.


For simplicity, in what follows we will only work with \MPT which has the maximal set of atoms and connectives among the propositional team logics we consider in this paper. Similar results for the other logics can be easily obtained as corollaries of those for \MPT. Let us now expand the language of \MPT and include strings of the forms $\neg\alpha$, $\top$, $\vec{\alpha}\perp\vec{\beta}$, $\dep(\vec{\alpha},\beta)$ and $\vec{\alpha}\subseteq\vec{\beta}$ as well-formed formulas, where $\alpha,\beta,\vec{\alpha}$ and $\vec{\beta}$ are classical formulas or sequences of classical formulas. Denote the extended logic by $\mathbb{MPT}$.
Observe that $\mathbb{MPT}$ will, clearly, have the same expressive power as \MPT, since the latter is already expressively complete for the set of all team properties (\Cref{PT_exp_pw}(i)).

We now define the semantics of $\mathbb{MPT}$. Given sequences $\vec{\alpha}=\alpha_1\dots\alpha_k$ and $\vec{\beta}=\beta_1\dots\beta_k$  of classical formulas,  we define an equivalence relation $\sim_{(\vec{\alpha},\vec{\beta})}$ on the set of all valuations as follows:
\[
 s \sim_{(\vec{\alpha},\vec{\beta})} s'\quad \text{ iff } \quad\forall  i\in\{1,\dots, k\} \, (\{s\}\models \alpha_i \iff \{s'\} \models \beta_i).
 \]
 We  write $\sim_{\vec{\alpha}}$ for $\sim_{(\vec{\alpha},\vec{\alpha})}$.
 
 \begin{definition}\label{ext_mpt_ts}
The team semantics of well-formed formulas of $\mathbb{MPT}$ is defined inductively in the same way as in \Cref{TS_CT,TS_PT} and additionally we have the following extra clauses:
\begin{itemize}
\item $X\models \neg\alpha$ iff $s\not\models\alpha$ in the usual sense for all $s\in X$.
\item $X\models\top$ always holds.
\item $X\models\vec{\alpha}\perp\vec{\beta}$ iff for all $s,s'\in X$, there exists $s''\in X$ such that
\(s\sim_{\vec{\alpha}}s''\text{ and } \mathop{s'\sim_{\vec{\beta}}s''}.\)
\item $X\models\dep(\vec{\alpha},\beta)$ iff for all $s,s'\in X$, 
\(\text{if }s\sim_{\vec{\alpha}}s'\text{ then }s\sim_{\beta}s'.\)
\item $X\models\vec{\alpha}\subseteq\vec{\beta}$ iff for all $s\in X$, there exists $s'\in X$ such that
\(s\sim_{(\vec{\alpha},\vec{\beta})}s'.\) 
\end{itemize}
\end{definition}

The above definition deserves some comments. In the literature of logics of dependence and independence, negation is usually treated only \emph{syntactically}. That is, a negated classical formula $\neg\phi$ is defined to have the same semantics as the unique formula $\phi^{\sim}$ in \emph{negation normal form} obtained by exhaustively applying the De Morgan's laws and the following syntactic rewrite rules: 
\begin{equation}\label{neg_df}
\begin{array}{rclcrclcrcl}
 p^\sim&\mapsto& \neg p &~~~~~~~~& \top^\sim&\mapsto&\bot&~~~~~~~~& (\phi\wedge\psi)^\sim&\mapsto&\phi^\sim\sor\psi^\sim\\
(\neg p)^\sim&\mapsto& p  &&\bot^\sim&\mapsto&\top& &(\phi\sor\psi)^\sim&\mapsto&\phi^\sim\wedge\psi^\sim \\
 \end{array}
\end{equation}
A routine inductive proof shows that $\neg\alpha\equiv\alpha^\sim$ for all classical formulas $\alpha$ (or see \cite{IemhoffYang15} for the proof), i.e., our negation as defined in \Cref{ext_mpt_ts} coincides with the above syntactic negation when applied to classical formulas. It is also worth noting that our negation corresponds to the defined connective $\sim\downarrow$ in Hodges \cite{Hodges1997a,Hodges1997b}.

The extended inclusion atom $\vec{\alpha}\subseteq\vec{\beta}$ is also studied in \cite{HellaStumpf2015} in the context of modal inclusion logic. There it is shown that the usual modal logic extended with the extended inclusion atoms has the same expressive power as the usual modal logic extended with the might modality $\triangledown$ (see Section 2, under the heading ``non-emptiness atom"). Given this result, it is natural to conjecture  that classical propositional logic extended with the extended inclusion atoms has the same expressive power as classical propositional logic extended with the might modality $\triangledown$, and it is also 
 expressively complete for the set of  union closed team properties which contain the empty team (see the remark after \Cref{PT_exp_pw}).

If $\alpha$ is a classical formula and $\sigma$ is a classical substitution, then $\sigma(\alpha)$ is still a classical formula. In particular, given any classical substitution, the substitution instance of an extended dependence and independence atom is a well-formed formula of the extended logic $\mathbb{MPT}$. Having the notion of classical substitution well-defined in $\mathbb{MPT}$, we will now  prove that $\mathbb{MPT}$ is closed under classical substitutions, namely the following theorem holds.

\begin{theorem}\label{closure_flatsub} Let $\Gamma\cup\{\phi\}$ be a set of formulas in the language of $\mathbb{MPT}$, and $\sigma$ a classical substitution. If $\Gamma\models\phi$, then $\{\sigma(\gamma)\mid \gamma\in \Gamma\}\models\sigma(\phi)$.
\end{theorem}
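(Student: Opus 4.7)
The natural strategy is to define a pullback of teams along $\sigma$ and prove a substitution lemma reducing satisfaction of $\sigma(\phi)$ to satisfaction of $\phi$. Concretely, for each valuation $s$ let $s^\sigma$ be the valuation with $s^\sigma(i)=1$ iff $\{s\}\models\sigma(p_i)$; this is well-posed since each $\sigma(p_i)$ is a classical formula and hence flat. For a team $X$ set $X^\sigma:=\{s^\sigma : s\in X\}$. Note that $s\mapsto s^\sigma$ is a surjection of $X$ onto $X^\sigma$, so in particular $X=\emptyset$ iff $X^\sigma=\emptyset$.

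First I would establish, by induction on the classical formula $\alpha$, the auxiliary statement $\{s\}\models\sigma(\alpha)$ iff $\{s^\sigma\}\models\alpha$. The base cases $p_i$ and $\neg p_i$ are immediate from the definition of $s^\sigma$, using that $\sigma(\neg p_i)$ is semantically $\neg\sigma(p_i)$ via the negation-normal-form rules (\ref{neg_df}); the $\wedge$ and $\sor$ steps reduce to the singleton case by flatness. The main step is then the substitution lemma: for every $\phi\in\mathbb{MPT}$ and every team $X$, $X\models\sigma(\phi)$ iff $X^\sigma\models\phi$, proved by induction on $\phi$. The constants $\bot,\nem,\top$ are handled via the equivalence $X=\emptyset\iff X^\sigma=\emptyset$, and the literals $p_i,\neg p_i$ via the auxiliary statement combined with flatness. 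For the extended atoms $\vec{\alpha}\perp\vec{\beta}$, $\dep(\vec{\alpha},\beta)$ and $\vec{\alpha}\subseteq\vec{\beta}$, the auxiliary statement yields $s\sim_{(\sigma(\vec{\alpha}),\sigma(\vec{\beta}))}s'$ iff $s^\sigma\sim_{(\vec{\alpha},\vec{\beta})}(s')^\sigma$, which together with surjectivity of the map $X\to X^\sigma$ lets the quantifier patterns in the semantic clauses transfer verbatim. The Boolean connectives $\wedge$ and $\vee$ are immediate from the induction hypothesis.

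The main obstacle is the case of the splitting disjunctions $\sor$ and $\nsor$. Given a decomposition $X^\sigma=A\cup B$ witnessing $X^\sigma\models\phi\sor\psi$, I would lift it via preimages by setting $Y:=\{s\in X : s^\sigma\in A\}$ and $Z:=\{s\in X : s^\sigma\in B\}$; then $Y\cup Z=X$, and by surjectivity $Y^\sigma=A$ and $Z^\sigma=B$, so the induction hypothesis yields $Y\models\sigma(\phi)$ and $Z\models\sigma(\psi)$. The converse direction simply projects any decomposition of $X$ forward to $X^\sigma$. The case of $\nsor$ uses additionally that $Y\neq\emptyset$ iff $Y^\sigma\neq\emptyset$, which is again immediate from surjectivity. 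Once the substitution lemma is in hand, the theorem is immediate: if $X\models\sigma(\gamma)$ for every $\gamma\in\Gamma$, then $X^\sigma\models\gamma$ for every such $\gamma$, whence $X^\sigma\models\phi$ by $\Gamma\models\phi$, and hence $X\models\sigma(\phi)$.
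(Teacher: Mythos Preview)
Your proposal is correct and follows essentially the same approach as the paper: define the map $s\mapsto s^\sigma$ (the paper writes $s_\sigma$), set $X^\sigma=\{s^\sigma:s\in X\}$, and prove the substitution lemma $X\models\sigma(\phi)\iff X^\sigma\models\phi$ by induction on $\phi$, from which the theorem follows immediately. Your treatment is in fact slightly more explicit than the paper's in two respects: you isolate the singleton auxiliary $\{s\}\models\sigma(\alpha)\iff\{s^\sigma\}\models\alpha$ for classical $\alpha$ (the paper invokes this implicitly as part of the induction hypothesis), and you spell out the preimage construction for the backward direction of $\sor$ and $\nsor$, which the paper leaves as ``analogous.'' One small omission: the extended language $\mathbb{MPT}$ also contains $\neg\alpha$ for arbitrary classical $\alpha$ (not just literals $\neg p_i$), so you should mention this case in the main induction, but it is handled by exactly the flatness-plus-auxiliary argument you already use for literals.
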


 Note that this theorem also implies that the original logic \MPT is closed under those  classical substitutions $\sigma$ such that $\sigma(\phi)$ is  a well-formed formula of \MPT, whenever $\phi$ is. In this sense, \Cref{closure_flatsub} also characterizes the behavior of substitutions in the original logic \MPT.

For the proof of \Cref{closure_flatsub}, we first establish the following lemma, which generalizes Lemma 3.5 in \cite{IemhoffYang15} concerning flat substitutions (note that classical substitutions are also flat substitutions). For any valuation $s$ and any substitution $\sigma$, define a valuation $s_\sigma$ as 
\[
 s_\sigma (i) =  
  \left\{
   \begin{array}{ll}
    1 & \text{if $\{s\}\models \sigma(p_i)$;} \\
    0 & \text{if $\{s\}\not\models \sigma(p_i)$.} \\
   \end{array}
  \right.
\]
For any team $X$, we write
\(
 X_\sigma = \{s_\sigma \mid s \in X \}.
\)

\begin{lemma}
 \label{lemsigmaoperation} 
For any formula $\phi$ in the language of $\mathbb{MPT}$ and any classical substitution $\sigma$, we have 
\[
 X \models \sigma(\phi) \iff X_\sigma \models \phi.
\]
\end{lemma}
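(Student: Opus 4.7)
The plan is to prove the lemma by induction on the structure of $\phi$. Before entering the induction, I would record a convenient sub-observation: for every classical formula $\alpha$ and valuation $s$, one has $\{s\}\models\sigma(\alpha)\iff\{s_\sigma\}\models\alpha$. Since $\sigma(\alpha)$ is again a classical formula, this is a restriction of the main lemma to singleton teams of classical formulas, and can be verified directly by an easy induction on $\alpha$ using the definition of $s_\sigma$ (for $p_i$ it holds by definition; the cases $\neg p_i,\bot,\wedge,\sor$ follow from the flatness of classical formulas proved in \Cref{cpl_prop}). An immediate consequence is that for any two sequences $\vec{\alpha},\vec{\beta}$ of classical formulas and any valuations $s,s'$, $s\sim_{(\sigma(\vec{\alpha}),\sigma(\vec{\beta}))}s'\iff s_\sigma\sim_{(\vec{\alpha},\vec{\beta})}s'_\sigma$.

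For the base cases, when $\phi$ is a classical atom or literal, the equivalence reduces to the flatness of $\sigma(\phi)$ and the sub-observation above. For $\phi=\bot$ and $\phi=\nem$, one uses that $s\mapsto s_\sigma$ sends $X$ to a nonempty team iff $X$ is nonempty, so $X\models\sigma(\bot)\iff X=\emptyset\iff X_\sigma=\emptyset\iff X_\sigma\models\bot$, and similarly for $\nem$ and $\top$. For the extended atoms $\vec{\alpha}\perp\vec{\beta}$, $\dep(\vec{\alpha},\beta)$, and $\vec{\alpha}\subseteq\vec{\beta}$, I would apply the semantic clauses in \Cref{ext_mpt_ts} together with the observation about the equivalence relations $\sim$: each existential or universal quantification over $s''\in X$ in the clause for $\sigma(\phi)$ corresponds, via $s\mapsto s_\sigma$, bijectively to the analogous quantification over $s''_\sigma\in X_\sigma$ in the clause for $\phi$.

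For the connectives $\wedge$ and $\vee$ the step is routine, using the inductive hypothesis on the two subformulas. The main case requiring care is $\sor$ (and the very similar $\nsor$): if $X\models\sigma(\psi)\sor\sigma(\chi)$ via a decomposition $X=Y\cup Z$ with $Y\models\sigma(\psi)$ and $Z\models\sigma(\chi)$, the induction hypothesis gives $Y_\sigma\models\psi$, $Z_\sigma\models\chi$, and $X_\sigma=Y_\sigma\cup Z_\sigma$ as required. Conversely, given a decomposition $X_\sigma=Y'\cup Z'$ witnessing $X_\sigma\models\psi\sor\chi$, I would lift it back to $X$ by setting $Y=\{s\in X\mid s_\sigma\in Y'\}$ and $Z=\{s\in X\mid s_\sigma\in Z'\}$; a short verification shows $Y\cup Z=X$ and $Y_\sigma=Y'$, $Z_\sigma=Z'$, after which the induction hypothesis closes the case. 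For $\nsor$ the extra point to check is that $Y',Z'$ nonempty forces $Y,Z$ nonempty, which follows from $Y_\sigma=Y'$ and $Z_\sigma=Z'$.

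The only real obstacle I foresee is the preimage construction for the tensor and nonempty disjunctions: one must confirm that the naive lift of a decomposition of $X_\sigma$ really does decompose $X$ and has the correct images under $\sigma$, and that nonemptiness is preserved in the $\nsor$ case. Everything else reduces cleanly, via flatness of classical formulas, to the sub-observation that $\{s\}\models\sigma(\alpha)\iff\{s_\sigma\}\models\alpha$ for classical $\alpha$.
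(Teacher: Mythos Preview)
Your proposal is correct and follows essentially the same inductive strategy as the paper. The paper is slightly terser: it handles the converse direction of $\sor$ and $\nsor$ with ``proved analogously'' without spelling out the preimage lift $Y=\{s\in X\mid s_\sigma\in Y'\}$, and it invokes ``the induction hypothesis'' for the $\sim$-equivalence in the atom cases where you instead isolate the sub-observation $\{s\}\models\sigma(\alpha)\iff\{s_\sigma\}\models\alpha$ explicitly; your version is a bit more careful on both points but the argument is the same.
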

\begin{proof}

We prove the lemma  by induction on  $\phi$.
The case $\phi=\{\nem,\bot,\top\}$ is trivial. Case $\phi=p_i$. Since $\sigma(p_i)$ has the flatness property, we have 
\[
  X \models \sigma(p_i) \iff \forall s \in X (\{s\}\models \sigma(p_i)) \iff 
  \forall s_\sigma \in X_\sigma (\{s_\sigma\} \models p_i) \iff X_\sigma \models p_i.
\]

Case $\phi=\vec{\alpha}\perp \vec{\beta}$, where $\vec{\alpha}$ and $\vec{\beta}$ are sequences of classical formulas. To show the direction ``$\Longrightarrow$'', assume $X\models \sigma(\vec{\alpha})\perp \sigma(\vec{\beta})$. For any $s_\sigma,s'_\sigma\in X_\sigma$, we have $s,s'\in X$ and there is $s''\in X$ such that $s\sim_{\sigma(\vec{\alpha})}s''$ and $s'\sim_{\sigma(\vec{\beta})}s''$. By the induction hypothesis, we have $s_\sigma\sim_{\vec{\alpha}}s''_\sigma$ and $s'_\sigma\sim_{\vec{\beta}}s''_\sigma$, as required. The other direction ``$\Longleftarrow$'' is proved analogously.

The cases $\phi=\dep(\vec{\alpha},\beta)$ and $\phi=\vec{\alpha}\subseteq \vec{\beta}$, where $\vec{\alpha},\vec{\beta},\beta$ are (sequences of) classical formulas, are proved analogously.

Case $\phi=\psi \nsor \chi$. 
For the direction ``$\Longrightarrow$'', assuming $X \models \sigma(\psi) \nsor \sigma(\chi)$, if $X=\emptyset$, then $X_\sigma=\emptyset\models \psi \nsor \chi$. If $X\neq\emptyset$, then there are nonempty sets $Y,Z \subseteq X$ such that $X = Y \cup Z$ and $Y \models \sigma(\psi)$ and $Z \models \sigma(\chi)$. Since $Y_\sigma \cup Z_\sigma = X_\sigma$ and $Y_\sigma,Z_\sigma\neq\emptyset$, we obtain $X_\sigma\models \psi \nsor \chi$ by the induction hypothesis. The other direction ``$\Longleftarrow$'' is proved analogously.

The case $\phi=\psi \sor \chi$ is proved analogously. The cases $\phi=\neg\alpha$ for $\alpha$ classical, $\phi=\psi\wedge\chi$ and $\phi=\psi\vee\chi$ follow readily from the induction hypothesis. 
\end{proof}


Finally, we give the proof of \Cref{closure_flatsub}.

\begin{proof}[Proof of \Cref{closure_flatsub}]
 If $\Gamma\models\phi$, then for any team $X$,
\begin{align*}
X\models\sigma(\gamma)\text{ for all }\gamma\in\Gamma&\Longrightarrow X_\sigma\models\gamma\text{ for all }\gamma\in\Gamma~~\text{ (by \Cref{lemsigmaoperation})}\\
&\Longrightarrow X_\sigma\models\phi~~\text{ (by the assumption)}\\
&\Longrightarrow X\models\sigma(\phi)~~\text{ (by \Cref{lemsigmaoperation})}
\end{align*}
Hence $\{\sigma(\gamma)\mid \gamma\in \Gamma\}\models\sigma(\phi)$.
\end{proof}

\section{Axiomatizations}\label{axioma}

In this section, we  study the axiomatization problem of propositional team logics. For a set of $n$ propositional variables, there are in total $2^{2^n}$ teams. Therefore propositional team logics are clearly \emph{decidable}. Concrete axiomatizations for propositional logics of dependence can be found in \cite{VY_PD}. In this section, we give natural deduction systems and prove the Completeness Theorem for the logics \PT, \ECL, \PInem and  \PIncs. Among the propositional team logics we have defined, these are the logics that have the non-emptiness \nem but not the nonempty disjunction $\nsor$ in their languages. The problem of finding (nontrivial) axiomatizations for the other logics, especially for propositional independence logic \PI, is open.



%
%

\subsection{\PT}\label{ptlogic}

In this subsection, we define a natural deduction system of strong propositional team logic (\PT), an expressively strongest logic, and prove the Soundness and Completeness Theorems for it.


We first present our natural deduction system. We adopt the standard conventions of natural deduction systems (readers who are not familiar with natural deduction systems are referred to, e.g., \cite{logic_structure_vanDalen,TroelstraSchwichtenberg96_book}). For example, the letter $D$ (with or without subscripts) in the following definition stand for an arbitrary derivation (with or without extra assumptions). As discussed in Section 4.2, \PT (and also the other propositional team logics to be axiomatized) are \emph{not} closed under uniform substitution. Therefore, the (sound and complete) deduction system given by the definition below will \emph{not} admit the \emph{Substitution Rule} 
\[\AxiomC{$\phi(p_{i_1},\dots,p_{i_n})$} \RightLabel{\Sub}\UnaryInfC{$\phi(\sigma(p_{i_1})/p_{i_1},\dots,\sigma(p_{i_n})/p_{i_n})$} \DisplayProof\]
In particular, the axioms and rules of the system presented below using concrete formulas such as $p_i$ should \emph{not} be read as  schemata. Only the metalanguage symbols $\phi$, $\psi$ and $\chi$  stand for arbitrary formulas. 


\begin{definition}[A natural deduction system of \PT]\label{Natrual_Deduct_CTLe}\

\vspace{-1\baselineskip}

\begin{center}
\setlength{\tabcolsep}{6pt}
\renewcommand{\arraystretch}{1.8}
\setlength{\extrarowheight}{1pt}
\begin{tabular}{|C{0.465\linewidth}C{0.465\linewidth}|}
\multicolumn{2}{c}{\textbf{AXIOMS}}\\\hline\hline
Atomic excluded middle&\nem introduction\\
\AxiomC{}\noLine\UnaryInfC{} \RightLabel{\exclmid}\UnaryInfC{$p_i\sor\neg p_i$}\noLine\UnaryInfC{}\DisplayProof&\AxiomC{} \RightLabel{\nemi}\UnaryInfC{$\bot\bor\nem$} \DisplayProof \\\hline
\end{tabular}
\end{center}


\begin{center}
\setlength{\tabcolsep}{6pt}
\renewcommand{\arraystretch}{1.8}
\setlength{\extrarowheight}{1pt}
\begin{tabular}{|C{0.465\linewidth}C{0.465\linewidth}|}
\multicolumn{2}{c}{\textbf{RULES}}\\\hline\hline
Conjunction introduction&Conjunction elimination\\
\AxiomC{}\noLine\UnaryInfC{$D_1$}\noLine\UnaryInfC{$\phi$}\AxiomC{}\noLine\UnaryInfC{$D_2$}\noLine\UnaryInfC{$\psi$}\RightLabel{\ci}\BinaryInfC{$\phi\wedge\psi$}\noLine\UnaryInfC{} \DisplayProof &
\AxiomC{}\noLine\UnaryInfC{$D$}\noLine\UnaryInfC{$\phi\wedge\psi$} \RightLabel{\ce}\UnaryInfC{$\phi$}\noLine\UnaryInfC{}  \DisplayProof\quad\AxiomC{}\noLine\UnaryInfC{$D$}\noLine\UnaryInfC{$\phi\wedge\psi$} \RightLabel{\ce}\UnaryInfC{$\psi$}\noLine\UnaryInfC{}  \DisplayProof\\\hline
\end{tabular}
\end{center}
\begin{center}
\setlength{\tabcolsep}{6pt}
\renewcommand{\arraystretch}{1.8}
\setlength{\extrarowheight}{1pt}
\begin{tabular}{|C{0.465\linewidth}C{0.465\linewidth}|}
\hline
Boolean disjunction introduction&Boolean disjunction elimination\\
\AxiomC{$D$}\noLine\UnaryInfC{$\phi$} \RightLabel{\bori}\UnaryInfC{$\phi\bor\psi$}  \DisplayProof\quad
\AxiomC{$D$}\noLine\UnaryInfC{$\psi$} \RightLabel{\bori}\UnaryInfC{$\phi\bor\psi$}  \DisplayProof&
\AxiomC{$D_0$}\noLine\UnaryInfC{$\phi\bor\psi$} \AxiomC{}\noLine\UnaryInfC{$[\phi]$}\noLine\UnaryInfC{$D_1$}\noLine\UnaryInfC{$\chi$} \AxiomC{}\noLine\UnaryInfC{$[\psi]$}\noLine\UnaryInfC{$D_2$}\noLine\UnaryInfC{$\chi$} \RightLabel{\bore}\TrinaryInfC{$\chi$}\noLine\UnaryInfC{} \DisplayProof\\\hline
Tensor disjunction weak introduction&Tensor disjunction weakening\\
\AxiomC{}\noLine\UnaryInfC{$D$}\noLine\UnaryInfC{$\phi$}\RightLabel{($\ast$)~\sorwi} \UnaryInfC{$\phi\sor\psi$}  \DisplayProof
&\multirow{2}{*}{\AxiomC{$D$}\noLine\UnaryInfC{$\phi$} 
 \RightLabel{\sorwk}\UnaryInfC{$\phi\sor\phi$} \DisplayProof}\\
{\small($\ast$) whenever $\psi$ dose not contain \nem}&\\\hline
Tensor disjunction weak elimination&Tensor disjunction weak substitution\\
\AxiomC{$D_0$}\noLine\UnaryInfC{$\phi\sor\psi$} \AxiomC{$[\phi]$}\noLine\UnaryInfC{$D_1$}\noLine\UnaryInfC{$\alpha$} \AxiomC{}\noLine\UnaryInfC{$[\psi]$}\noLine\UnaryInfC{$D_2$}\noLine\UnaryInfC{$\alpha$} \RightLabel{$(\ast)$~~\sorwe}\TrinaryInfC{$\alpha$}\DisplayProof 
&\AxiomC{$D_1$}\noLine\UnaryInfC{$\phi\sor\psi$} \AxiomC{}\noLine\UnaryInfC{$[\psi]$}\noLine\UnaryInfC{$D_2$}\noLine\UnaryInfC{$\chi$} \RightLabel{($\ast$)~\sorws}\BinaryInfC{$\phi\sor\chi$} \DisplayProof\\
{\small($\ast$) whenever $\alpha$ is a classical formula and the undischarged assumptions in the derivations $D_1$ and $D_2$ contain classical formulas only}
&{\small($\ast$) whenever the undischarged assumptions in the derivation $D_2$ contain classical formulas only }\\\hline
\multicolumn{2}{|c|}{Commutative and associative laws for tensor disjunction}\\
\AxiomC{}\noLine\UnaryInfC{$D$}\noLine\UnaryInfC{$\phi\sor\psi$} \RightLabel{$\com\sor$}\UnaryInfC{$\psi\sor\phi$}\noLine\UnaryInfC{} \DisplayProof
&
\AxiomC{}\noLine\UnaryInfC{$D$}\noLine\UnaryInfC{$\phi\sor(\psi\sor \chi)$} \RightLabel{$\ass\sor$}\UnaryInfC{$(\phi\sor\psi)\sor\chi$}\noLine\UnaryInfC{} \DisplayProof\\\hline
Contradiction introduction&Contradiction elimination\\
\AxiomC{}\noLine\UnaryInfC{$D$}\noLine\UnaryInfC{$p_i\wedge\neg p_i$}\RightLabel{\boti} \UnaryInfC{$\bot$}\noLine\UnaryInfC{} \DisplayProof
&\AxiomC{}\noLine\UnaryInfC{$D$}\noLine\UnaryInfC{$\phi\sor \bot$} \RightLabel{\bote}\UnaryInfC{$\phi$}\noLine\UnaryInfC{} \DisplayProof\\\hline
%
Strong \emph{ex falso}&Strong contradiction contraction\\
\AxiomC{}\noLine\UnaryInfC{$D$}\noLine\UnaryInfC{$\bot\wedge\nem$} \RightLabel{\sexfalso}\UnaryInfC{$\phi$}\noLine\UnaryInfC{} \DisplayProof
&\AxiomC{}\noLine\UnaryInfC{$D$}\noLine\UnaryInfC{$\phi\sor(\bot\wedge\nem)$} \RightLabel{\sctrc}\UnaryInfC{$\bot\wedge\nem$}\noLine\UnaryInfC{} \DisplayProof\\\hline
\end{tabular}
\end{center}
\begin{center}
\setlength{\tabcolsep}{6pt}
\renewcommand{\arraystretch}{1.8}
\setlength{\extrarowheight}{1pt}
\begin{tabular}{|C{0.465\linewidth}C{0.465\linewidth}|}
\hline
\multicolumn{2}{|c|}{Strong contradiction introduction}\\
\multicolumn{2}{|c|}{\def\ScoreOverhang{0.5pt}
 \def\defaultHypSeparation{\hskip .1in}
\AxiomC{}\noLine\UnaryInfC{$D$}\noLine\UnaryInfC{$\displaystyle\Big(\bigsor_{s\in X}(p_{i_1}^{s(i_1)}\wedge\dots\wedge p_{i_{n}}^{s(i_n)}\wedge\nem)\Big)\wedge\Big(\bigsor_{s\in Y}(p_{i_1}^{s(i_1)}\wedge\dots\wedge p_{i_{n}}^{s(i_n)}\wedge\nem)\Big)$} \RightLabel{($\ast$)~\sctri}\UnaryInfC{$\bot\wedge\nem$}
 \DisplayProof}\\
\multicolumn{2}{|c|}{\small($\ast$) whenever $X$ and $Y$ are distinct teams on $\{i_1,\dots,i_n\}$}\\\hline
\multicolumn{2}{|c|}{Distributive laws}\\
\AxiomC{$D$}\noLine\UnaryInfC{$\phi\otimes(\psi\bor\chi)$} \RightLabel{$\dstr\sor\bor$}\UnaryInfC{$(\phi\otimes \psi)\bor(\phi\otimes \chi)$}  \DisplayProof
&\AxiomC{$D$}\noLine\UnaryInfC{$\displaystyle\nem\wedge\bigsor_{i\in I}\phi_i$} \RightLabel{\dstr$\nem\wedge\sor$}\UnaryInfC{$\displaystyle\bigbor_{\emptyset\neq J\subseteq I}\,\bigsor_{i\in J}(\nem\wedge\phi_i)$}\noLine\UnaryInfC{} \DisplayProof\\\hline
\end{tabular}
\end{center}
\end{definition}

The above deduction system consists of two axioms and fifteen sets of rules.  The atomic excluded middle axiom \exclmid is \emph{not} an axiom schema. Especially, as discussed in the previous section, the substitution instances of $\neg p_i$ are not necessarily well-formed formulas of \PT. The conjunction $\wedge$ and the  Boolean disjunction $\vee$ have the usual introduction and elimination rules. The usual commutative law, associative law and distributive laws for the two connectives can be derived easily in the system. Over classical formulas the tensor disjunction admits the usual Introduction Rule and Elimination Rule. Over non-classical formulas the rules for the tensor disjunction $\otimes$  are more subtle. The Introduction Rule \sorwi  is not in general sound in case $\psi$ does not have the empty team property. For instance, we have  $\bot\not\models\bot\sor\nem$. But the weakening rule \sorwk is sound and we will apply this rule later on in our completeness proof.
The elimination rule \sorwe is not in general sound in case $\alpha$ is non-classical. For instance, we have $p_i\sor \neg p_i\not\models p_i\vee\neg p_i$, even if $p_i\models  p_i\vee\neg p_i$ and $\neg p_i\models  p_i\vee\neg p_i$. The rules \sorwe and \sorws have some side conditions concerning the undischarged assumptions in the sub-derivations. To see why these restrictions are necessary, note, for example, that for the rule \sorwe we have
\[(\nem\wedge p)\sor(\nem\wedge\neg p),p\sor\neg p\not\models \bot,\]
\[\text{ even if }(\nem\wedge p)\sor(\nem\wedge\neg p),p\models \bot\text{ and }(\nem\wedge p)\sor(\nem\wedge\neg p),\neg p\models \bot\]
and for the rule \sorws we have
\[\nem\not\models \nem\sor(\nem\wedge\bot),\text{ even if }\nem\models \nem\sor\bot\text{ and }\nem,\bot\models\nem\wedge\bot.\]
In the absence of the usual (Strong) Introduction Rule and Elimination Rule for $\sor$, we added the (weak) substitution, commutative and associative rules for $\sor$ to the system. We also include some of the distributive laws that involve $\otimes$ in our system, and we derive the other sound distributive laws in the next proposition. The usual distributive laws that are not listed in Definition \ref{Natrual_Deduct_CTLe} are not sound in our system, see \cite{VY_PD} for examples of the failure of these laws. The non-emptiness \nem, the weak and strong negation ($\bot$ and $\bot\wedge\nem$) have self-explanatory  rules in our system. 

\begin{proposition}\label{PT_derivable_rules} The following are derivable in the natural deduction system of \PT:
\begin{description}
\item[(i)] Weak ex falso (\wexfalso): If $\phi$ does not contain \nem, then $\bot\vdash\phi$. 


\item[(ii)] The usual commutative law, associative law and distributive law for conjunction and Boolean disjunction.

\item[(iii)] Distributive laws:

\begin{tabular}{l}
(a)~ $\phi\sor(\psi\wedge\chi)\vdash(\phi\sor \psi)\wedge(\phi\sor \chi)$ ($\dstr\sor\wedge$)\\ 
(b)~ $\phi\bor (\psi\sor \chi)\vdash(\phi\bor\psi)\sor(\phi\bor\chi)$ ($\dstr\bor\sor$)\\
(c)~ $(\phi\otimes \psi)\bor(\phi\otimes \chi)\vdash\phi\otimes(\psi\bor\chi)$ ($\dstr\sor\bor\sor$)\\
(d)~ If $\alpha$ is a classical formula, then \vspace{4pt}\\
\quad\begin{tabular}{l}
- $(\alpha\wedge \psi)\sor(\alpha\wedge \chi)\vdash\alpha\wedge(\psi\sor\chi)$  ($\dstrs\wedge\sor\wedge$)\\
- $\alpha\wedge(\psi\sor\chi)\vdash(\alpha\wedge \psi)\sor(\alpha\wedge \chi)$ ($\dstrs\wedge\sor$)
\end{tabular}
\end{tabular}
%
%
\end{description}
\end{proposition}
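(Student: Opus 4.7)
The plan for proving Proposition~4.1 follows the tripartite structure of the statement.

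For (i), I would argue by induction on the complexity of $\phi$. Since $\phi$ is $\nem$-free, every subformula is too, so the side condition of $\sorwi$ is available throughout. For the atomic cases $\phi \in \{p_i, \neg p_i, \bot\}$, apply $\sorwi$ to $\bot$ to obtain $\bot \sor \phi$, then $\com\sor$ to commute, then $\bote$ to extract $\phi$. The inductive step for $\wedge$ is $\ci$ on two copies of the IH; for $\bor$, $\bori$ on one copy; for $\sor$, the IH on one disjunct followed by $\sorwi$, whose side condition is met because the other disjunct is also $\nem$-free. No other connective appears in the language of $\PT$, so the induction is complete.

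Part (ii) is routine: the commutative, associative, and distributive laws for $\wedge$ and $\bor$ are derived in the standard manner from $\ci$, $\ce$, $\bori$, $\bore$, independently of $\sor$ and $\nem$, exactly as in intuitionistic propositional calculus.

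For part (iii), the key issue is that $\sor$ is governed only by the restricted rules $\sorwi$, $\sorwe$, $\sorws$, whose side conditions (classicality of the conclusion of $\sorwe$; classicality of the undischarged assumptions in the subderivation of $\sorws$) must be checked at every step. The pattern I plan to use repeatedly is $\sorws$ in the form ``given $\phi \sor \psi$ and a subderivation assuming $\psi$ that yields $\chi$, conclude $\phi \sor \chi$'', verifying at each application that the subderivation has no live undischarged assumption beyond classical ones. In (a), both $\phi \sor \psi$ and $\phi \sor \chi$ arise from $\phi \sor (\psi \wedge \chi)$ via $\sorws$ whose subderivation ``assume $\psi \wedge \chi$, apply $\ce$'' has no undischarged assumption at all, after which $\ci$ combines the two. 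In (b), apply $\bore$ to $\phi \bor (\psi \sor \chi)$: on the $\psi \sor \chi$ branch, two successive $\sorws$ steps with $\bori$ in each subderivation yield $(\phi \bor \psi) \sor (\phi \bor \chi)$; on the $\phi$ branch, first use $\sorwk$ to split $\phi$ into $\phi \sor \phi$, then rewrite each copy via $\sorws$ and $\bori$, interleaving $\com\sor$. Item (c) is dual: $\bore$ on $(\phi \sor \psi) \bor (\phi \sor \chi)$ followed by a single $\sorws$ with $\bori$ in each branch.

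The classical distributive laws in (d) are where the classicality side conditions become genuinely essential, and this is the main technical obstacle. For $(\alpha \wedge \psi) \sor (\alpha \wedge \chi) \vdash \alpha \wedge (\psi \sor \chi)$, I extract $\alpha$ via $\sorwe$---its classicality requirement on the conclusion $\alpha$ is met, and the subderivations ``assume $\alpha \wedge \psi$, apply $\ce$'' have no undischarged assumption---and extract $\psi \sor \chi$ by two $\sorws$ steps patterned on (a), closing with $\ci$. For the converse, after separating $\alpha$ and $\psi \sor \chi$ by $\ce$, I apply $\sorws$ to $\psi \sor \chi$ with subderivation ``assume $\psi$, then by $\ci$ with the hypothesis $\alpha$ derive $\alpha \wedge \psi$''; here the undischarged assumption $\alpha$ is classical, which is precisely what the $\sorws$ side condition permits. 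Repeating this (via $\com\sor$) for the other disjunct completes the derivation. The main bookkeeping burden throughout is tracking which hypotheses remain live in each subderivation and confirming that the $\sor$-rule side conditions are satisfied; the classicality of $\alpha$ in (d) is what licenses its use as an ambient assumption under $\sorws$.
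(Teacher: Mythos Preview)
Your proposal is correct and aligns with the paper's approach for (ii) and (iii); in particular, your treatment of (iii)(b) matches the paper's derivation exactly (use \bore, with \sorwk on the $\phi$-branch and \sorws/\bori throughout), and for (iii)(a), (c), (d) you supply details where the paper simply says these are ``not hard to derive'' and refers to \cite{VY_PD}.

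The one noteworthy difference is in (i): you argue by induction on $\phi$, but the paper gives a two-line direct derivation that works uniformly for every $\nem$-free $\phi$, namely
\[
\bot \;\xrightarrow{\ \sorwi\ }\; \bot\sor\phi \;\xrightarrow{\ \com\sor,\ \bote\ }\; \phi.
\]
Your base case is precisely this derivation, so you already have the full argument in hand; the inductive step is redundant because the side condition of \sorwi requires only that $\phi$ be $\nem$-free, not that it be atomic. Your induction is not wrong, just unnecessary.
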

\begin{proof}
The rules in item (ii) are derived as usual. 
The items (a), (c) and (d) are not hard to derive (or see the proof of Proposition 4.6 in \cite{VY_PD}). We will only give the derivation for the other rules.

For \wexfalso, assuming that $\phi$ does not contain \nem, we have the following derivation
\begin{center}\AxiomC{$\bot$} \RightLabel{\sorwi}\UnaryInfC{$\bot\sor\phi$}\RightLabel{\bote}\UnaryInfC{$\phi$} \DisplayProof\end{center}



For $\dstr\bor\sor$, we have the following derivation
\begin{center}
 \def\defaultHypSeparation{\hskip .1in}

\AxiomC{$\phi\bor (\psi\sor \chi)$}

\AxiomC{[$\phi$]}
\RightLabel{\sorwk}
\UnaryInfC{$\phi\sor\phi$}

\RightLabel{\bori, \sorws}
\UnaryInfC{$(\phi\bor\psi)\sor(\phi\bor\chi)$}

\AxiomC{[$\psi\sor\chi$]}

\RightLabel{\bori, \sorws}
\UnaryInfC{$(\phi\bor\psi)\sor(\phi\bor\chi)$}

\RightLabel{\bore}
 \TrinaryInfC{$(\phi\bor\psi)\sor(\phi\bor\chi)$}\DisplayProof
\end{center}

%
%
%
\end{proof}

Next, we prove the Soundness Theorem for our deduction system.

\begin{theorem}[Soundness Theorem]\label{PDbornem_soundness}
For any set $\Gamma\cup\{\phi\}$ of formulas in the language of \PT, we have 
\(\Gamma\vdash\phi\,\Longrightarrow\,\Gamma\models\phi.\)
\end{theorem}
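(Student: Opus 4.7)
I would prove soundness by induction on the length (equivalently, the height) of the derivation witnessing $\Gamma\vdash\phi$. The inductive step amounts to checking, for each axiom, that it is valid in every team, and for each rule, that whenever the premises hold on a team $X$ under all undischarged assumptions, the conclusion also holds on $X$. Both axioms are immediate: $X\models p_i\sor\neg p_i$ holds by splitting $X$ into $\{s\in X:s(i)=1\}$ and $\{s\in X:s(i)=0\}$, and $X\models\bot\bor\nem$ holds because a team is either empty or nonempty.

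The rules for $\wedge$, for $\bor$, for tensor commutativity/associativity, for $\boti$, for $\bote$, and for $\sexfalso$ are routine from the clauses in \Cref{TS_CT,TS_PT}. For the weakening rule $\sorwk$, I would use the fact that $X\models\phi\sor\phi$ whenever $X\models\phi$ via the trivial split $X=X\cup X$. The distributive law $\dstr\sor\bor$ follows from the observation that if $X=Y\cup Z$ with $Y\models\phi$ and $Z\models\psi\bor\chi$, then either $Z\models\psi$ (giving $X\models\phi\sor\psi$) or $Z\models\chi$ (giving $X\models\phi\sor\chi$).

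The delicate cases are the rules with side conditions on classical formulas, together with $\sctri$ and $\dstr\nem\wedge\sor$. For $\sorwi$ I would first prove the easy auxiliary fact that any $\nem$-free formula has the empty team property (by induction on the formula); then if $X\models\phi$, the split $X=X\cup\emptyset$ witnesses $X\models\phi\sor\psi$. For $\sorwe$ I would use \Cref{cpl_prop}: if $X\models\phi\sor\psi$ with witness decomposition $X=Y\cup Z$, and all undischarged assumptions of the subderivations are classical, then by the Downward Closure Property those assumptions still hold on $Y$ and on $Z$; applying the induction hypothesis yields $Y\models\alpha$ and $Z\models\alpha$, and the Union Closure Property of the classical $\alpha$ gives $X=Y\cup Z\models\alpha$. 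The rule $\sorws$ is handled analogously: from $X=Y\cup Z$ with $Y\models\phi$ and $Z\models\psi$, we may invoke the subderivation of $\chi$ from $\psi$ on the team $Z$ (its classical undischarged assumptions survive by downward closure) to obtain $Z\models\chi$, whence $X\models\phi\sor\chi$.

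For $\sctri$ I would appeal to \Cref{X_ThetaX_nem}(i) together with the Locality Property: if $X$ and $Y$ are distinct teams on $\{i_1,\ldots,i_n\}$ and a team $Z$ satisfied the conjunction $\Theta_X^\ast\wedge\Theta_Y^\ast$, then by locality $Z\upharpoonright\{i_1,\ldots,i_n\}$ would equal both $X$ and $Y$, contradicting $X\neq Y$; hence no team satisfies the premise and the rule is vacuously sound (with conclusion the strong contradiction). The hardest case will be $\dstr\nem\wedge\sor$: assuming $X\models\nem\wedge\bigsor_{i\in I}\phi_i$, I would take the witnessing decomposition $X=\bigcup_{i\in I}Y_i$ with $Y_i\models\phi_i$, let $J=\{i\in I:Y_i\neq\emptyset\}$ (nonempty since $X\neq\emptyset$), and verify that the corresponding disjunct $\bigsor_{i\in J}(\nem\wedge\phi_i)$ is witnessed by the same decomposition restricted to $J$. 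The main obstacle is to execute this bookkeeping cleanly in the inductive framework and to confirm in each of $\sorwe$, $\sorws$, and the analogous substitution step that the classicality side conditions actually suffice — i.e., that downward closure plus union closure of classical formulas is precisely what makes these otherwise unsound rules sound.
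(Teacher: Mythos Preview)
Your proposal is correct and follows essentially the same approach as the paper's own proof: induction on the derivation, with the routine cases dispatched quickly and the delicate cases identified as exactly $\sorwi$, $\sorwe$, $\sorws$, $\sctri$, and $\dstr\nem\wedge\sor$, each handled via the same key ingredients (the empty team property for $\nem$-free formulas, downward and union closure of classical formulas from \Cref{cpl_prop}, \Cref{X_ThetaX_nem} together with locality for $\sctri$, and the nonempty-index subset $J$ for $\dstr\nem\wedge\sor$). The only rule you do not name explicitly is $\sctrc$, but its soundness is indeed routine (no team satisfies $\bot\wedge\nem$, so the premise is unsatisfiable).
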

\begin{proof}
We show that $\Gamma\models\phi$ holds for each derivation  $D=\{\delta_1,\dots,\delta_k\}$ with the conclusion $\phi$ and the hypotheses in $\Gamma$.

If $D$ is a singleton, then $\phi\in \Gamma$ or $\phi=p_i\sor\neg p_i$ or $\phi=\bot\vee\nem$. In the first case, obviously $\{\phi\}\models\phi$. The last two cases are also easy because the two formulas are clearly valid.

The induction steps for the rules 1-4, 6, 9-12, 14 and the distributive rules $\dstr\sor\vee$ and $\dstr\sor\vee\sor$ are easy to verify. We only check the induction step for the other rules.



$\sorwi$: Assume that $D$ is a derivation for $\Pi\vdash\phi$. We show that $\Pi\models\phi\sor\psi$ whenever $\psi$ is does not contain \nem. Suppose $X\models\theta$ for all $\theta\in \Pi$. By the induction hypothesis, we have $\Pi\models\phi$, thus $X\models\phi$ follows. Now, since $\psi$ does not contain \nem, an easy inductive proof shows it has the empty team property, 
i.e., $\emptyset\models\psi$. 
Hence we obtain $\emptyset\cup X\models\phi\sor\psi$ as required.


$\sorwe$: Assume that $D_0$, $D_1$ and $D_2$ are derivations for $\Pi_0\vdash\phi\sor\psi$, $\Pi_1,\phi\vdash\alpha$ and $\Pi_2,\psi\vdash\alpha$, respectively. Assume that  $\alpha$ is a classical formula and $\Pi_1$ and $\Pi_2$ contain classical formulas only. We show that $\Pi_0,\Pi_1,\Pi_2\models\alpha$ follows from the induction hypothesis $\Pi_0\models\phi\sor\psi$,  $\Pi_1,\phi\models\alpha$ and $\Pi_2,\psi\models\alpha$.  Suppose  $X\models\theta$ for all $\theta\in \Pi_0\cup\Pi_1\cup\Pi_2$. Then we have $X\models\phi\sor\psi$, which means that there exist $Y,Z\subseteq X$ such that $X=Y\cup Z$, $Y\models\phi$ and $Z\models\psi$. For all $\theta\in \Pi_1\cup\Pi_2$, since $\theta$ is a classical formula, by \Cref{cpl_prop} we know that $\theta$ has the downward closure property.  It then follows that  $Y\models\theta_1$ for all $\theta_1\in \Pi_1$ and $Z\models\theta_2$ for all $\theta_2\in \Pi_2$.  Thus $Y\models \alpha$ and $Z\models\alpha$.  Since $\alpha$ is a classical formula,  by \Cref{cpl_prop} we know that $\alpha$ has the union closure property, which yields that $X\models\alpha$.

$\sorws$: Assume that  $D_1$ and $D_2$ are derivations for $\Pi_1\vdash\phi\sor\psi$ and $\Pi_2,\psi\vdash\chi$, respectively and $\Pi_2$ contains classical formulas only. We show that $\Pi_1,\Pi_2\models\phi\sor\chi$ follows from the induction hypothesis $\Pi_1\models\phi\sor\psi$ and $\Pi_2,\psi\models\chi$. Suppose $X\models\theta$ for all $\theta\in \Pi_1\cup \Pi_2$. By the induction hypothesis, we have $X\models \phi\sor\psi$, which means there exist $Y,Z\subseteq X$ such that $X=Y\cup Z$, $Y\models\phi$ and $Z\models\psi$.  For all $\theta\in \Pi_2$, since  $\theta$ is a classical formula, by \Cref{cpl_prop} we know that $\theta$ has the downward closure property. Thus $Z\models\theta$. It then follows from the induction hypothesis that $Z\models\chi$. Hence we conclude $Y\cup Z=X\models\phi\sor\chi$.

\sctri: It suffices to show that if $X$ and $Y$ are distinct teams on $\{i_1,\dots,i_n\}$, then $Z\not\models\Theta_X^\ast\wedge\Theta_Y^\ast$ for all teams $Z$. In view of the locality property, we may assume that $Z$ is a team on $\{i_1,\dots,i_n\}$. By Lemma \ref{X_ThetaX_nem}, if $Z\models\Theta^\ast_X\wedge\Theta^\ast_Y$, then $X=Z=Y$, which contradicts the assumption.

\dstrs$\wedge\sor$: It suffices to show that $ \phi\wedge(\psi\sor\chi)\models(\phi\wedge \psi)\sor(\phi\wedge \chi)$, whenever $\phi$ does not contain \nem. Suppose $X\models\phi\wedge(\psi\sor\chi)$. Then $X\models\phi$, $Y\models\psi$ and $Z\models\chi$ for some teams $Y,Z\subseteq X$ with $Y\cup Z=X$. Since $\phi$ does not contain \nem, an easy inductive proof shows that $\phi$ has the downward closure property. 
Thus $Y\models \phi$ and $Z\models\phi$. Hence $Y\models\phi\wedge \psi$  and $Z\models\phi\wedge\chi$, implying $X\models(\phi\wedge \psi)\sor(\phi\wedge \chi)$.

\dstr\nem$\wedge\sor$: It suffices to show that if $X\models \nem\wedge\bigsor_{i\in I}\phi_i$ for some team $X$, then $X\models\bigsor_{i\in J}(\nem\wedge\phi_i)$ for some nonempty $J\subseteq I$. The assumption implies that $X\neq \emptyset$ and there exist teams $X_i\subseteq X$ for each $i\in I$ such that $\bigcup_{i\in I}X_i=X$ and $X_i\models\phi_i$. Let $J\subseteq I$ be the set of indices $i\in I$ such that $X_i\neq\emptyset$. Since $X\neq\emptyset$, we must have that $J\neq\emptyset$. Hence $X_i\models\nem\wedge\phi_i$ for each $i\in J$ and $X\models \bigsor_{i\in J}(\nem\wedge\phi_i)$.
\end{proof}

We now proceed to prove the main result of this section, the Completeness Theorem for our system. Our argument is similar to that of the proof of the Completeness Theorem for propositional logics of dependence in \cite{VY_PD}. The reader may compare what follows with Section 4.2 in  \cite{VY_PD}. To begin with, below we present a crucial lemma that is very similar to Lemma 3.2 in \cite{VY_PD}.

\begin{lemma}\label{comp_CLTe_main_lm}
For any finite nonempty collections of teams $\{X_f\mid f\in F\}$, $\{Y_g\mid g\in G\}$ on some fixed domain, the following are equivalent: 
\begin{description}
\item[(a)] $\displaystyle\bigbor_{f\in F}\Theta_{X_f}^\ast\models\bigbor_{g\in G}\Theta_{Y_g}^\ast$;
\item[(b)] for each $f\in F$, we have $X_f= Y_{g_f}$ for some $g\in G$.
\end{description}
\end{lemma}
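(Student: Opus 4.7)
The proof should be almost immediate from Lemma~\ref{X_ThetaX_nem}(i), which says that $\Theta_X^\ast$ precisely characterizes the team $X$: for any team $Y$ on the fixed domain $N=\{i_1,\dots,i_n\}$, we have $Y\models\Theta_X^\ast$ iff $Y=X$. My plan is to use this characterization in both directions. I will also invoke the Locality Property (Lemma~\ref{cpl_prop}, which extends to all of $\MPT$ as noted after Definition~\ref{TS_PT}) so that I may restrict attention to teams on the common finite domain $N$ on which all the $X_f$ and $Y_g$ live.

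For the direction $(b)\Rightarrow(a)$, I would take an arbitrary team $Z$ satisfying $\bigbor_{f\in F}\Theta_{X_f}^\ast$. By the semantics of $\bor$, there is some $f\in F$ with $Z\models\Theta_{X_f}^\ast$, hence $Z=X_f$ by Lemma~\ref{X_ThetaX_nem}(i). Assumption (b) gives some $g_f\in G$ with $X_f=Y_{g_f}$, so $Z=Y_{g_f}$ and therefore $Z\models\Theta_{Y_{g_f}}^\ast$, giving $Z\models\bigbor_{g\in G}\Theta_{Y_g}^\ast$.

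For the direction $(a)\Rightarrow(b)$, fix any $f\in F$. By Lemma~\ref{X_ThetaX_nem}(i) applied with $Y=X_f$, we have $X_f\models\Theta_{X_f}^\ast$, and thus $X_f\models\bigbor_{f\in F}\Theta_{X_f}^\ast$. Hypothesis (a) then yields $X_f\models\bigbor_{g\in G}\Theta_{Y_g}^\ast$, so there exists $g_f\in G$ with $X_f\models\Theta_{Y_{g_f}}^\ast$. One more application of Lemma~\ref{X_ThetaX_nem}(i) gives $X_f=Y_{g_f}$, as required.

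There is no real obstacle here; the only subtlety is to make sure the teams are considered on a common domain so that the characterization of Lemma~\ref{X_ThetaX_nem} applies uniformly, which is handled by the Locality Property. The assumption that the collections $\{X_f\mid f\in F\}$ and $\{Y_g\mid g\in G\}$ are nonempty is not strictly used in the argument above, but is natural in view of the empty-disjunction convention $\bigbor\emptyset=\bot\wedge\nem$, which is never satisfied and would make the equivalence trivial in degenerate cases.
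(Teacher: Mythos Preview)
Your proposal is correct and is exactly the approach the paper takes: the paper's proof consists of the single line ``Follows easily from Lemma~\ref{X_ThetaX_nem},'' and you have simply spelled out the straightforward details of that derivation. Your remark about Locality and about the nonemptiness hypothesis are accurate side observations but not essential to the argument.
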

\begin{proof}
Follows easily from Lemma \ref{X_ThetaX_nem}.
%
\end{proof}

Two formulas $\phi$ and $\psi$ are said to be \emph{provably equivalent} (written $\phi\dashv\vdash\psi$) if both $\phi\vdash\psi$ and $\psi\vdash\phi$ hold.
Another crucial step for the completeness proof is to establish that every formula  is provably equivalent in the deduction system to a formula in the disjunctive normal form shown in \Cref{tb:nf}.  Let us now state this as a lemma.

\begin{lemma}\label{DNF_CTLe}
Every formula $\phi(p_{i_1},\dots,p_{i_{n}})$ in the language of \PT is provably equivalent to a formula in the normal form
\begin{equation}\label{NF_CTLe_eq}
\bigbor_{f\in F}\Theta_{X_f}^\ast,~\text{ where }~\Theta_{X_f}^\ast=\bigsor_{s\in X_f}(p_{i_1}^{s(i_1)}\wedge\dots\wedge p_{i_{n}}^{s(i_{n})}\wedge\nem),
\end{equation}
 $\{X_f\mid f\in F\}$  a  finite set of teams on $N=\{i_1,\dots,i_{n}\}$. 
\end{lemma}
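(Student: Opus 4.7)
The plan is to proceed by structural induction on $\phi$, maintaining the invariant that the formula is provably equivalent to a Boolean disjunction $\bigbor_{f\in F}\Theta^*_{X_f}$ of full-width descriptions of teams on $N=\{i_1,\dots,i_n\}$. The two nontrivial ingredients are the distributive laws collected in \Cref{PT_derivable_rules}(iii) and the axiom \sctri, which together let us manipulate Boolean disjunctions of $\Theta^*_X$-formulas up to provable equivalence.

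\textbf{Base cases.} The constant $\bot$ is already in the desired shape, since $\Theta^*_\emptyset$ is, by the empty tensor convention, equal to $\bot$. For $\nem$, $p_j$ and $\neg p_j$ (with $j\in N$), we exhibit the normal form $\bigbor\{\Theta^*_X : X\subseteq 2^N,\ X\models\phi\}$. The equivalence is proved by splitting on each of the $n$ variables via \exclmid, then using the distributive rules $\dstr\sor\bor$ and $\dstrs\wedge\sor$ together with $\wedge$ introduction and elimination to realign each resulting disjunct in the shape $\Theta^*_X$; conversely, each $\Theta^*_X$ with $X\models\phi$ entails $\phi$ either by extracting the appropriate conjunct with \ce, or (for $X=\emptyset$, i.e. the disjunct $\bot$) by \wexfalso.

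\textbf{Inductive steps.} For conjunction, the induction hypothesis plus distributivity (\Cref{PT_derivable_rules}(ii)) give
\[
\phi\wedge\psi\;\dashv\vdash\;\bigbor_{f,g}\bigl(\Theta^*_{X_f}\wedge\Theta^*_{Y_g}\bigr).
\]
Axiom \sctri yields $\Theta^*_{X_f}\wedge\Theta^*_{Y_g}\vdash\bot\wedge\nem$ whenever $X_f\neq Y_g$, and $\Theta^*_X\wedge\Theta^*_X\dashv\vdash\Theta^*_X$ is immediate. Using \sexfalso with \bore to absorb any disjunct provably equal to $\bot\wedge\nem$, what remains is precisely $\bigbor\{\Theta^*_{X_f} : (\exists g)\,X_f=Y_g\}$, or $\bot\wedge\nem$ itself (the empty Boolean disjunction, hence in normal form) if no matches exist. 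For tensor disjunction, the distributive laws $\dstr\sor\bor$ and $\dstr\sor\bor\sor$ push $\sor$ inside, reducing everything to the claim $\Theta^*_X\sor\Theta^*_Y\dashv\vdash\Theta^*_{X\cup Y}$. The ``$\supseteq$'' direction uses \sorwk to duplicate the single-valuation conjuncts $\delta_s$ for $s\in X\cap Y$, after which $\com\sor$ and $\ass\sor$ regroup the factors into two $\sor$-blocks indexed by $X$ and $Y$. The ``$\subseteq$'' direction is the reverse, relying on the idempotence $\delta_s\sor\delta_s\dashv\vdash\delta_s$, which is derived by \sorws (legitimate because the classical conjuncts of $\delta_s$ meet the side-condition on undischarged assumptions) together with $\wedge$-bookkeeping to recover the $\nem$-conjunct. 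For Boolean disjunction the step is trivial, since the two normal forms concatenate.

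\textbf{Main obstacle.} The decisive technical point is the syntactic equivalence $\Theta^*_X\sor\Theta^*_Y\dashv\vdash\Theta^*_{X\cup Y}$ when $X\cap Y\neq\emptyset$. This forces a controlled merging of duplicated $\delta_s$ factors, and must respect the restriction in \sorwe and \sorws that the relevant subderivations carry only classical undischarged assumptions. Once this merging is carried out for single-valuation $\delta_s$, the rest of the argument is essentially bookkeeping via $\com\sor$, $\ass\sor$, \sctri, and the distributive laws. The rest of the induction then goes through as sketched, yielding the stated normal form.
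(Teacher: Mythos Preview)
Your overall strategy matches the paper's---structural induction, with the $\wedge$-step handled via \sctri, the $\sor$-step via distributivity plus the merge $\Theta^*_X\sor\Theta^*_Y\dashv\vdash\Theta^*_{X\cup Y}$, and the $\vee$-step by concatenation. Those inductive steps are essentially correct and track the paper's argument.

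The gap is in the base cases. You propose to obtain the normal form of $\nem$ (and the full-$N$ normal form of $p_j$) by conjoining with excluded-middle instances and then applying the distributive laws $\dstr\sor\bor$ and $\dstrs\wedge\sor$. But $\dstrs\wedge\sor$ carries the side condition that the formula being distributed is \emph{classical}, and $\nem$ is not; neither of the laws you cite lets you push a bare $\nem$ across a tensor disjunction. The only rule in the system that converts $\nem\wedge\bigsor_i\phi_i$ into a Boolean disjunction is the primitive rule $\dstr\nem\wedge\sor$, which you do not invoke anywhere. This rule is precisely what the paper uses, and it is indispensable: from $\nem$ and the excluded-middle axiom one gets $\nem\wedge(p_{i_1}\sor\neg p_{i_1})$, and then $\dstr\nem\wedge\sor$ yields the Boolean disjunction $(p_{i_1}\wedge\nem)\vee(\neg p_{i_1}\wedge\nem)\vee\big((p_{i_1}\wedge\nem)\sor(\neg p_{i_1}\wedge\nem)\big)$. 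Without this step there is no way to generate the requisite $\vee$-structure from $\nem$.

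The same omission undermines your plan for the atomic $p_j$ case over the full index set $N$: the paper first shows $p_{i_k}\dashv\vdash\bot\vee(p_{i_k}\wedge\nem)$ (a one-variable normal form) and then proves a separate variable-expansion Claim, whose forward direction again rests on $\dstr\nem\wedge\sor$. Your direct route to the $n$-variable normal form faces the same obstacle. Once you add $\dstr\nem\wedge\sor$ to your toolkit and supply the variable-expansion argument, the rest of your sketch goes through.
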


The detailed proof of the above lemma will be postponed. We now give the proof of the  Completeness Theorem for our system.

\begin{theorem}[Completeness Theorem]\label{CTLe_completeness}
For  any  formulas $\phi$ and $\psi$ in the language of \PT, we have $\psi\models\phi\Longrightarrow \psi\vdash\phi$. In particular, $\models\phi\Longrightarrow\,\vdash\phi$.
\end{theorem}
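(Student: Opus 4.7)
The plan is to reduce the completeness claim to the combinatorial characterization of entailment between normal forms provided by Lemma~\ref{comp_CLTe_main_lm}. Fix a finite set $N=\{i_1,\dots,i_n\}$ of indices containing the index of every propositional variable occurring in $\psi$ or $\phi$. By the normal form Lemma~\ref{DNF_CTLe}, there are finite families of teams $\{X_f\mid f\in F\}$ and $\{Y_g\mid g\in G\}$ on $N$ such that
$$\psi\dashv\vdash\psi':=\bigbor_{f\in F}\Theta_{X_f}^{\ast}\quad\text{and}\quad\phi\dashv\vdash\phi':=\bigbor_{g\in G}\Theta_{Y_g}^{\ast}.$$
By the Soundness Theorem~\ref{PDbornem_soundness}, $\psi\equiv\psi'$ and $\phi\equiv\phi'$, so the hypothesis $\psi\models\phi$ yields $\psi'\models\phi'$. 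It then suffices to derive $\psi'\vdash\phi'$, since composing with $\psi\vdash\psi'$ and $\phi'\vdash\phi$ gives the desired derivation.

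The degenerate cases are immediate. If $F=\emptyset$ then $\psi'=\bot\wedge\nem$, and a single application of the strong \emph{ex falso} rule \sexfalso produces $\phi$. Otherwise $F\neq\emptyset$, and I claim $G\neq\emptyset$: indeed, for any $f\in F$ the team $X_f$ itself satisfies $\Theta_{X_f}^{\ast}$ by Lemma~\ref{X_ThetaX_nem}, hence $X_f\models\psi'\models\phi'$, so $\phi'$ is satisfiable, which forces $G\neq\emptyset$ (as $G=\emptyset$ would give $\phi'=\bot\wedge\nem$).

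With both $F,G\neq\emptyset$, Lemma~\ref{comp_CLTe_main_lm} applied to $\psi'\models\phi'$ yields, for each $f\in F$, an index $g_f\in G$ with $X_f=Y_{g_f}$, so $\Theta_{X_f}^{\ast}$ and $\Theta_{Y_{g_f}}^{\ast}$ are syntactically identical. The derivation of $\psi'\vdash\phi'$ is then routine: apply Boolean disjunction elimination \bore to $\psi'=\bigbor_{f\in F}\Theta_{X_f}^{\ast}$, and in the $f$-th sub-derivation (with temporary assumption $\Theta_{X_f}^{\ast}=\Theta_{Y_{g_f}}^{\ast}$) apply \bori repeatedly to obtain $\bigbor_{g\in G}\Theta_{Y_g}^{\ast}=\phi'$. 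Every branch reaches $\phi'$, so \bore concludes $\psi'\vdash\phi'$.

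The main obstacle in this approach is not the completeness argument itself, which is the three-step reduction above, but the normal form Lemma~\ref{DNF_CTLe} that underlies it; its proof requires a careful induction on formula complexity, pushing $\bor$ outward through $\sor$ and $\wedge$ via the distributive laws $\dstr\sor\bor$, $\dstr\bor\sor$, $\dstr\sor\wedge$ and $\dstr\nem\wedge\sor$, normalising tensor products of state descriptions using commutativity/associativity $\com\sor$, $\ass\sor$, and collapsing syntactically incompatible state descriptions (of the form $\Theta_X^{\ast}\wedge\Theta_Y^{\ast}$ with $X\neq Y$) to $\bot\wedge\nem$ by \sctri, to be subsequently eliminated via \sctrc. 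That lemma is the real work; modulo it, the proof of Theorem~\ref{CTLe_completeness} is the short argument just outlined.
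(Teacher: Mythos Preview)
Your proof is correct and follows essentially the same route as the paper: reduce both formulas to normal form via Lemma~\ref{DNF_CTLe}, pass to the semantic entailment between normal forms by Soundness, invoke Lemma~\ref{comp_CLTe_main_lm} to match disjuncts, and finish with \bori/\bore, handling the degenerate empty-index cases via \sexfalso. Your treatment of the degenerate case is slightly cleaner than the paper's (you argue semantically that $F\neq\emptyset$ forces $G\neq\emptyset$, whereas the paper handles $G=\emptyset$ separately), but this is a cosmetic difference.
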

\begin{proof}
Suppose $\psi\models\phi$, where 
$\phi=\phi(p_{i_1},\dots,p_{i_n})$ and $\psi=\psi(p_{i_1},\dots,p_{i_n})$. By  \Cref{DNF_CTLe}, we have 
\begin{equation}\label{ctle_compl_eq1}
\phi\dashv\vdash \bigbor_{f\in F}\Theta^\ast_{X_f}\quad\text{ and }\quad\psi\dashv\vdash \bigbor_{g\in G}\Theta^\ast_{Y_g}
\end{equation}
 for some finite sets $\{X_f\mid f\in F\}$ and $\{Y_g\mid g\in G\}$ of teams on $\{i_1,\dots,i_n\}$. The Soundness Theorem and (\ref{ctle_compl_eq1}) imply that
\begin{equation}\label{ctle_compl_eq2}
\bigbor_{f\in F}\Theta^\ast_{X_f}\models\bigbor_{g\in G}\Theta^\ast_{Y_g}.
\end{equation}

If $F=\emptyset$, then $\phi\dashv\vdash\bot\wedge\nem$. We obtain  $\phi\vdash\psi$ by \sexfalso. If $G=\emptyset$, then $\psi\dashv\vdash\bot\wedge\nem$. In view of (\ref{ctle_compl_eq2}), we must have $F=\emptyset$ as well and  $\phi\dashv\vdash\bot\wedge\nem$. Hence $\phi\vdash \psi$.

If $F,G\neq \emptyset$, then by Lemma \ref{comp_CLTe_main_lm},  for each $f\in F$ we have $X_f=Y_{g_f}$ and $\Theta^\ast_{X_f}=\Theta^\ast_{Y_{g_f}}$ for some $g_f\in G$,  
which implies that \(\Theta^\ast_{X_f}\vdash\bigbor_{g\in G}\Theta^\ast_{Y_g}\) by \bori.
Hence, we obtain
\(
\bigbor_{f\in F}\Theta^\ast_{X_f}\vdash\bigbor_{g\in G}\Theta^\ast_{Y_g}
\) by applying  \bore. Finally, in view of (\ref{ctle_compl_eq1}) we conclude that $\phi\vdash\psi$.
\end{proof}

\begin{theorem}[Strong Completeness Theorem]
For  any set $\Gamma\cup\{\phi\}$ of formulas in the language of \PT, we have 
\(\Gamma\models\phi\,\Longrightarrow\,\Gamma\vdash\phi.\)
\end{theorem}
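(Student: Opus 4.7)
The plan is to obtain strong completeness as a routine consequence of two facts already established in the paper: the Compactness Theorem (Theorem~4.1) and the (single-premise) Completeness Theorem just proved (Theorem~5.3). The one technical point is bridging from finite sets of premises to the single-formula format in which Theorem~5.3 is stated; this is handled via conjunction.

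Assume $\Gamma\models\phi$. By the Compactness Theorem, there is a finite subset $\Gamma_0=\{\gamma_1,\dots,\gamma_n\}\subseteq\Gamma$ with $\Gamma_0\models\phi$. From the clause for $\wedge$ in the team semantics of Definition~2.3 one has the obvious equivalence
\[
 \Gamma_0\models\phi\iff\gamma_1\wedge\dots\wedge\gamma_n\models\phi,
\]
so setting $\gamma:=\gamma_1\wedge\dots\wedge\gamma_n$ we obtain $\gamma\models\phi$. Applying the (weak) Completeness Theorem~5.3 yields $\gamma\vdash\phi$, i.e.\ there is a derivation of $\phi$ from the single hypothesis $\gamma_1\wedge\dots\wedge\gamma_n$.

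To finish, I assemble a derivation from $\Gamma_0$. Starting from the hypotheses $\gamma_1,\dots,\gamma_n\in\Gamma$, repeated applications of conjunction introduction \ci{} produce $\gamma_1\wedge\dots\wedge\gamma_n$. Concatenating this with the derivation provided by Theorem~5.3 gives $\Gamma_0\vdash\phi$, and hence $\Gamma\vdash\phi$, as required.

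The argument is entirely straightforward; there is no genuine obstacle, since all the work has already been done in establishing compactness (Theorem~4.1) and single-premise completeness (Theorem~5.3). The only minor subtlety is that Theorem~5.3 is phrased for a single premise rather than a finite set, but this is harmlessly bypassed by folding the finite $\Gamma_0$ into a conjunction and then unfolding it with \ci{} in the final derivation.
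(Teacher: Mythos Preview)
Your proposal is correct and follows essentially the same approach as the paper, which simply cites the Compactness Theorem and the single-premise Completeness Theorem; you have merely spelled out the conjunction-folding step that the paper leaves implicit. The only small edge case worth noting is $\Gamma_0=\emptyset$, but this is already covered by the ``In particular, $\models\phi\Longrightarrow\,\vdash\phi$'' clause of the Completeness Theorem.
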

\begin{proof}
By \Cref{CTLe_completeness} and the Compactness Theorem (\Cref{compactness_PI}).
\end{proof}

We end this section by supplying the proof of \Cref{DNF_CTLe}.

\begin{proof}[Proof of \Cref{DNF_CTLe}]
Note that in the statement of the lemma we have fixed a set $\{p_{i_1},\dots,p_{i_n}\}$ of variables. These variables all occur in the formula (\ref{NF_CTLe_eq}) in normal form, but not necessarily all of them actually occur in the formula $\phi(p_{i_1},\dots,p_{i_n})$. In order to take care of this subtle point we first prove the following claim:

\begin{claim*} 
If $\{i_1,\dots,i_m\}\subset \{j_1,\dots,j_k\}$, then any formula $\psi(p_{i_1},\dots,p_{i_m})$ in the normal form is provably equivalent to a formula $\theta(p_{j_1},\dots,p_{j_k})$ in the normal form.
\end{claim*}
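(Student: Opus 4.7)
The plan is to argue by induction on $r = |\{j_1,\dots,j_k\}\setminus\{i_1,\dots,i_m\}|$; the base case $r=0$ is trivial, and the inductive step reduces the general case to $r=1$. So assume $J = \{i_1,\dots,i_m,j\}$ with $j\notin I = \{i_1,\dots,i_m\}$. Given $\psi=\bigbor_{f\in F}\Theta^\ast_{X_f}$, define $\theta=\bigbor_{g\in G}\Theta^\ast_{Y_g}$ where $\{Y_g\}_{g\in G}$ enumerates all teams $Y$ on $J$ with $Y\upharpoonright I\in\{X_f:f\in F\}$; semantic equivalence of $\psi$ and $\theta$ follows from the locality property and Lemma~\ref{X_ThetaX_nem}. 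By \bori and \bore, provable equivalence $\psi\dashv\vdash\theta$ reduces to the following single-team \emph{lifting equivalence} for each team $X$ on $I$:
\[\Theta^\ast_X\;\dashv\vdash\;\bigbor_{Y\subseteq 2^J,\,Y\upharpoonright I=X}\Theta^\ast_Y. \qquad (\star)\]

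For the forward direction of $(\star)$, I would first treat the singleton case $X=\{s\}$: writing $p_I^s$ for $p_{i_1}^{s(i_1)}\wedge\dots\wedge p_{i_m}^{s(i_m)}$, use \ci with the axiom \exclmid to derive $(p_I^s\wedge\nem)\wedge(p_j\sor\neg p_j)$, apply $\dstrs\wedge\sor$ to the inner classical formula $p_I^s$ to push the $\sor$ outward, and apply $\dstr\nem\wedge\sor$ to produce a three-way Boolean disjunction whose disjuncts are exactly the three $\Theta^\ast_Y$'s for $Y$ a lift of $\{s\}$. For a general $X$, iterate this singleton derivation across the $\sor$-disjuncts of $\Theta^\ast_X$ using $\sorws$ (whose classicality side-condition is vacuously met since each singleton derivation uses no undischarged hypotheses beyond its starting formula and the axiom \exclmid has none), and flatten the resulting $\sor$'s of $\bor$'s into a single top-level $\bor$ by iterated application of the primitive rule $\dstr\sor\bor$. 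Because lifts of distinct $\{s\}$ and $\{s'\}$ are disjoint as teams on $J$, each tensor-combination $\Theta^\ast_{Y_1}\sor\Theta^\ast_{Y_2}$ equals $\Theta^\ast_{Y_1\cup Y_2}$ up to reordering by $\com\sor$ and $\ass\sor$, so the resulting Boolean indexing matches exactly the lifts of $X$.

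For the backward direction of $(\star)$, apply \bore to reduce to $\Theta^\ast_Y\vdash\Theta^\ast_X$ for each lift $Y$. Grouping $\Theta^\ast_Y=\bigsor_{s\in X}\mu_s$ with $\mu_s=\bigsor_{t\in Y_s}(p_J^t\wedge\nem)$ and $Y_s=\{t\in Y:t\upharpoonright I=s\}$, the proof reduces, via iterated $\sorws$ (whose side-condition is met because the sub-derivations are assumption-free), to establishing $\mu_s\vdash p_I^s\wedge\nem$ for each $s$. The case $|Y_s|=1$ is immediate from \ce and \ci. For $|Y_s|=2$, $\sorwe$ with the classical $\alpha=p_I^s$ yields $\mu_s\vdash p_I^s$, while two applications of $\sorws$ substituting each inner $p_J^t\wedge\nem$ by $\nem$ via \ce yield $\mu_s\vdash\nem\sor\nem$; \ci then gives $\mu_s\vdash p_I^s\wedge(\nem\sor\nem)$.

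The hard part will be establishing the derivable rule $\nem\sor\nem\vdash\nem$, which is not among the primitive rules of the system. I plan to prove it by exploiting the axiom \nemi together with the strong \emph{ex falso}: conjoin $\nem\sor\nem$ with \nemi via \ci to get $(\nem\sor\nem)\wedge(\bot\bor\nem)$, distribute $\wedge$ over $\bor$ (via the standard derived law in item (ii) of Proposition~\ref{PT_derivable_rules}), and split by \bore. The \nem-case is immediate by \ce. In the \bot-case, commute to $\bot\wedge(\nem\sor\nem)$, apply $\dstrs\wedge\sor$ with the classical $\alpha=\bot$ to obtain $(\bot\wedge\nem)\sor(\bot\wedge\nem)$, apply $\sorws$ with the assumption-free derivation $\bot\wedge\nem\vdash\bot$ (via \ce) to get $(\bot\wedge\nem)\sor\bot$; then \bote yields $\bot\wedge\nem$ and \sexfalso finally delivers $\nem$. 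Combining both branches via \bore completes the derivation of $\nem\sor\nem\vdash\nem$, which then feeds back via \ce and \ci into the backward direction to conclude $\mu_s\vdash p_I^s\wedge\nem$ and hence the claim.
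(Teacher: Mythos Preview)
Your proof is correct and uses the same essential toolkit as the paper (\exclmid, $\dstrs\wedge\sor$, $\dstr\nem\wedge\sor$, $\dstr\sor\bor$ for the expansion; \ce\ and \sorws\ for the contraction), but organised differently: you induct on the number $r$ of new variables and treat singleton teams first before gluing via \sorws, whereas the paper handles all $r$ new variables at once by conjoining with the full classical disjunction $\bigsor_{t\in 2^{K\setminus M}}(p_{i_{m+1}}^{t(i_{m+1})}\wedge\dots\wedge p_{i_k}^{t(i_k)})$ and distributing in bulk. Your organisation makes the bijection between tuples of singleton lifts and lifts of $X$ explicit, which is a nice touch. More importantly, the paper's backward direction (its $\theta\vdash\psi$) simply writes ``(\ce, \sorws)'' followed by ``(\bore)'', but passing from $\bigsor_{s\in Y}(p_{i_1}^{s(i_1)}\wedge\dots\wedge p_{i_m}^{s(i_m)}\wedge\nem)$ to $\Theta^\ast_{X_f}$ requires contracting repeated $\sor$-disjuncts of the form $p_I^u\wedge\nem$ whenever some $u\in X_f$ has two lifts in $Y$---and this reduces precisely to the $\nem\sor\nem\vdash\nem$ lemma you isolate. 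Your derivation of that lemma via \nemi, distributivity of $\wedge$ over $\bor$, $\dstrs\wedge\sor$ with $\alpha=\bot$, \bote, and \sexfalso\ is correct and fills a step the paper glosses over.
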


\begin{proofclaim}
Without loss of generality we may assume that $K=\{j_1,\dots,j_k\}=\{i_1,\dots,i_m,i_{m+1},\dots,i_k\}$ and $k>m$. 
By the assumption, we have
\[\psi(p_{i_1},\dots,p_{i_m})= \bigbor_{f\in F}\bigsor_{s\in X_f}(p_{i_1}^{s(i_1)}\wedge\dots\wedge p_{i_{m}}^{s(i_{m})}\wedge\nem),\]
where $\{X_f\mid f\in F\}$ is a finite set of teams on $M=\{i_1,\dots,i_m\}$. Let
\[\theta(p_{i_1},\dots,p_{i_k})=\bigbor_{f\in F}\mathop{\bigvee_{Y\subseteq 2^K}}_{Y\upharpoonright M=X_f}\bigsor_{s\in Y}(p_{i_1}^{s(i_1)}\wedge\dots\dots p_{i_k}^{s(i_k)}\wedge\nem).\]

The following derivation proves $\theta\vdash\psi$:
\[
\begin{array}{rll}
(1) & \displaystyle\bigbor_{f\in F}\mathop{\bigvee_{Y\subseteq 2^K}}_{Y\upharpoonright M=X_f}\bigsor_{s\in Y}(p_{i_1}^{s(i_1)}\wedge\dots\dots p_{i_k}^{s(i_k)}\wedge\nem)\\
(2) & \displaystyle\bigbor_{f\in F}\mathop{\bigvee_{Y\subseteq 2^K}}_{Y\upharpoonright M=X_f}\bigsor_{s\in Y}(p_{i_1}^{s(i_1)}\wedge\dots\dots p_{i_m}^{s(i_m)}\wedge\nem)&(\ce, \sorws)\\
(3) & \displaystyle\bigbor_{f\in F}\bigsor_{s\in X_f}(p_{i_1}^{s(i_1)}\wedge\dots\wedge p_{i_{m}}^{s(i_{m})}\wedge\nem)&(\bore)
\end{array}
\]

Conversely, $\psi\vdash\theta$ is proved by the following derivation:
\[
\begin{array}{rl}
(1) & \displaystyle\bigbor_{f\in F}\bigsor_{s\in X_f}(p_{i_1}^{s(i_1)}\wedge\dots\wedge p_{i_{m}}^{s(i_{m})}\wedge\nem)\\
(2) &\displaystyle (p_{i_{m+1}}\sor\neg p_{i_{m+1}})\wedge\dots\wedge (p_{i_k}\sor\neg p_{i_k})\quad\text{(\exclmid, \ci)}\\
(3) &\displaystyle \bigsor_{t\in 2^{K\setminus M}}(p_{i_{m+1}}^{t(i_{m+1})}\wedge\dots\wedge p_{i_{k}}^{t(i_{k})})\quad((2), \dstrs\wedge\sor)\\
(4) &\displaystyle \Big(\bigbor_{f\in F}\bigsor_{s\in X_f}(p_{i_1}^{s(i_1)}\wedge\dots\wedge p_{i_{m}}^{s(i_{m})}\wedge\nem)\Big)\wedge\Big( \bigsor_{t\in 2^{K\setminus M}}(p_{i_{m+1}}^{t(i_{m+1})}\wedge\dots\wedge p_{i_{k}}^{t(i_{k})})\Big)\\
&~~((1), (3), \ci)\\
(5) & \displaystyle \bigbor_{f\in F}\bigsor_{s\in X_f}\Big(p_{i_1}^{s(i_1)}\wedge\dots\wedge p_{i_{m}}^{s(i_{m})}\wedge\big(\nem\wedge  \bigsor_{t\in 2^{K\setminus M}}(p_{i_{m+1}}^{t(i_{m+1})}\wedge\dots\wedge p_{i_{k}}^{t(i_{k})})\big)\Big)\\
&~~( \dstrs\wedge\sor)\\
(6) &  \displaystyle\bigbor_{f\in F}\bigsor_{s\in X_f}\Big(p_{i_1}^{s(i_1)}\wedge\dots\wedge p_{i_{m}}^{s(i_{m})}\wedge\bigvee_{\emptyset\neq Z\subseteq 2^{K\setminus M}}\,\bigsor_{t\in Z}(p_{i_{m+1}}^{t(i_{m+1})}\wedge\dots\wedge p_{i_{k}}^{t(i_{k})}\wedge \nem)\Big)\\
&~~(\dstr\nem\wedge\sor, \sorws)\\
(7) &  \displaystyle\bigbor_{f\in F}\bigsor_{s\in X_f}\,\bigvee_{\emptyset\neq Z\subseteq 2^{K\setminus M}}\Big(p_{i_1}^{s(i_1)}\wedge\dots\wedge p_{i_{m}}^{s(i_{m})}\wedge\bigsor_{t\in Z}(p_{i_{m+1}}^{t(i_{m+1})}\wedge\dots\wedge p_{i_{k}}^{t(i_{k})}\wedge \nem)\Big)\\
(8) &  \displaystyle\bigbor_{f\in F} \,\bigsor_{s\in X_f}\,\bigvee_{\emptyset\neq Z\subseteq 2^{N\setminus M}}\,\bigsor_{t\in Z}(p_{i_1}^{s(i_1)}\wedge\dots\wedge p_{i_{m}}^{s(i_{m})}\wedge p_{i_{m+1}}^{t(i_{m+1})}\wedge\dots\wedge p_{i_{k}}^{t(i_{k})}\wedge \nem)\\
&~~(\dstrs\wedge\sor)\\
\end{array}
\]
\[
\begin{array}{rl}
(9) & \displaystyle\bigbor_{f\in F}\,\bigvee_{G: X_f\to \mathcal{Z}}\bigsor_{s\in X_f}\,\bigsor_{t\in G(s)}(p_{i_1}^{s(i_1)}\wedge\dots\wedge p_{i_{m}}^{s(i_{m})}\wedge p_{i_{m+1}}^{t(i_{m+1})}\wedge\dots\wedge p_{i_{k}}^{t(i_{k})}\wedge \nem)\\
&~~\text{where }\mathcal{Z}=\{Z\subseteq 2^{K\setminus M}\mid Z\neq\emptyset\}\quad(\dstr\sor\bor)\\
(10) & \displaystyle\bigbor_{f\in F}\mathop{\bigvee_{Y\subseteq 2^K}}_{Y\upharpoonright M=X_f}\bigsor_{s\in Y}(p_{i_1}^{s(i_1)}\wedge\dots\dots p_{i_k}^{s(i_k)}\wedge\nem)\quad(\text{since dom}(X_f)=M)
\end{array}
\]
\end{proofclaim}

We now prove \Cref{DNF_CTLe} by induction on $\phi(p_{i_1},\dots,p_{i_n})$.

Case $\phi(p_{i_1},\dots,p_{i_n})=p_{i_k}$. 
We prove that
\(p_{i_k}\dashv\vdash\bot\bor(p_{i_k}\wedge\nem).\)
For $p_{i_k}\vdash\bot\bor(p_{i_k}\wedge\nem)$, we have the following derivation: \allowdisplaybreaks
\[
\begin{array}{rll}
(1)& p_{i_k}\\
(2)&\bot\bor\nem&\text{(\nemi)}\\
(3)&\big( p_{i_k}\wedge\bot\big)\bor(p_{i_k}\wedge\nem)&\text{((1), (2), \ci, \dstr)}\\
(4)&\bot\bor(p_{i_k}\wedge\nem)&\text{(\ce)}
\end{array}
\]
Conversely, for $\bot\bor(p_{i_k}\wedge\nem)\vdash p_{i_k}$, we have the following derivation
\allowdisplaybreaks
\[
\begin{array}{rll}
(1)&\bot\bor(p_{i_k}\wedge\nem)\\
(2)& p_{i_k}\vee p_{i_k}&\text{(\wexfalso, \ce)}\\
(3)& p_{i_k}&\text{(\bore)}
\end{array}
\]
 By the Claim, the formula $p_{i_k}$ is provably equivalent to a formula $\theta(p_{i_1},\dots,p_{i_n})$ in the normal form.

\vspace{0.5\baselineskip}

Case $\phi(p_{i_1},\dots,p_{i_n})=\neg p_{i_k}$. Similar to the above case.

\vspace{0.5\baselineskip}

%
%
Case $\phi(p_{i_1},\dots,p_{i_n})=\nem$. 
Note that \nem is a formula with no propositional variable, but for the sake of the inductive proof, we need to prove the theorem for \nem viewed as $\nem(p_{i_1},\dots,p_{i_n})$, a formula whose propositional variables are among $p_{i_1},\dots,p_{i_n}$. 
By the claim, it suffices to derive the normal form for \nem when it is viewed as $\nem(p_{i_1})$. We prove that $\nem\dashv\vdash\theta$, where
\begin{equation*}
\theta:= (p_{i_1}\wedge \nem)\vee(\neg p_{i_1}\wedge \nem)\vee \big((p_{i_1}\wedge \nem)\sor (\neg p_{i_1}\wedge \nem)\big).
\end{equation*}

For $\nem\vdash\theta$, we have the following derivation:
\allowdisplaybreaks
\[
\begin{array}{rll}
(1)&\nem\\
(2)&\nem\wedge(p_{i_1}\sor\neg p_{i_1})&\text{(\exclmid, \ci)}\\
(3)&(p_{i_1}\wedge \nem)\vee(\neg p_{i_1}\wedge \nem)\vee \big((p_{i_1}\wedge \nem)\sor (\neg p_{i_1}\wedge \nem)\big)& \text{(\dstr\,$\nem\wedge\sor$)}
\end{array}
\]

For  the other direction $\theta\vdash \nem$, we have the following derivation:
\allowdisplaybreaks
\[
\begin{array}{rll}
(1)&(p_{i_1}\wedge \nem)\vee(\neg p_{i_1}\wedge \nem)\vee \big((p_{i_1}\wedge \nem)\sor (\neg p_{i_1}\wedge \nem)\big)\\
(2)&(p_{i_1}\wedge \nem)\vee(\neg p_{i_1}\wedge \nem)\vee \big((p_{i_1}\sor \neg p_{i_1})\wedge \nem\big)& \text{(\dstrs$\wedge\sor\wedge$)}\\
(3)&\nem\vee\nem\vee \nem& \text{(\ce)}\\
(4) &\nem&\text{(\bore)}
\end{array}
\]

\vspace{0.5\baselineskip}

Case $\phi(p_{i_1},\dots,p_{i_n})=\bot$. Trivially $\bot\dashv\vdash\Theta^\ast_\emptyset=\bot$.

\vspace{0.5\baselineskip}

\vspace{0.5\baselineskip}

 Case $\phi(p_{i_1},\dots,p_{i_n})=\psi(p_{i_1},\dots,p_{i_n})\bor\chi(p_{i_1},\dots,p_{i_n})$. By the induction hypothesis, we have
  \begin{equation}\label{pdbornem_nf_proof_IH}
  \psi\dashv\vdash\bigbor_{f\in F}\Theta^\ast_{X_f}\text{ and }\chi\dashv\vdash\bigbor_{g\in G}\Theta^\ast_{X_g},
  \end{equation}
where  each $X_{f},X_g\subseteq 2^{N}$. Then it follows from the rules \bore and \bori that 
\[\psi\bor\chi\dashv\vdash\bigbor_{f\in F}\Theta_{X_f}^\ast\vee\bigbor_{g\in G}\Theta_{X_g}^\ast.\]
If $\psi\dashv\vdash \bot\wedge\nem$ (i.e., $F=\emptyset$), then we obtain further by \sexfalso, \bore and \bori that $\psi\bor\chi\dashv\vdash\bigbor_{g\in G}\Theta_{X_g}^\ast$. Similarly for the case $\chi\dashv\vdash \bot\wedge\nem$.


\vspace{0.5\baselineskip}

Case $\phi(p_{i_1},\dots,p_{i_n})=\psi(p_{i_1},\dots,p_{i_n})\sor\chi(p_{i_1},\dots,p_{i_n})$. By the induction hypothesis, we have (\ref{pdbornem_nf_proof_IH}). If $\psi\dashv\vdash \bot\wedge\nem$ (i.e., $F=\emptyset$), then we derive $\psi\sor\chi\dashv\vdash\bot\wedge\nem=\bigbor\emptyset$ by (\sctrc) and (\sexfalso). Similarly for the case $\chi\dashv\vdash \bot\wedge\nem$ (i.e., $G=\emptyset$).


Now, assume $F,G\neq\emptyset$. We show that $\psi\sor\chi\dashv\vdash \theta$, where 
\[\theta:= \bigbor_{f\in F}\bigbor_{g\in G}\Theta^\ast_{X_f\cup X_g}.\]
For the direction $\psi\sor\chi\vdash\theta$, we have the following derivation:
\allowdisplaybreaks
\[
\begin{array}{rll}
(1)&\psi\sor\chi\\
(2)&\displaystyle\Big(\bigbor_{f\in F}\Theta^\ast_{X_f}\Big)\sor\Big(\bigbor_{g\in G}\Theta^\ast_{X_g}\Big)\\
(3)&\displaystyle\bigbor_{f\in F}\bigbor_{g\in G}\left(\Theta^\ast_{X_f}\sor\Theta^\ast_{X_g}\right)&\text{(\dstr$\sor\bor$)}\\
(4)&\displaystyle\bigbor_{f\in F}\bigbor_{g\in G}\Theta^\ast_{X_f\cup X_g}&(\sorwe, \sorws)
\end{array}
\]
The other direction $\theta\vdash\psi\sor\chi$ is proved similarly using \sorwk and $\dstr\sor\bor\sor$.

\vspace{0.5\baselineskip}

Case $\phi(p_{i_1},\dots,p_{i_n})=\psi(p_{i_1},\dots,p_{i_n})\wedge\chi(p_{i_1},\dots,p_{i_n})$. By the induction hypothesis, we have (\ref{pdbornem_nf_proof_IH}).  If $\psi\dashv\vdash \bot\wedge\nem$ (i.e., $F=\emptyset$), then we derive $\psi\wedge\chi\dashv\vdash \bot\wedge\nem=\bigbor\emptyset$ by (\ce) and (\sexfalso). Similarly for the case $\chi\dashv\vdash \bot\wedge\nem$ (i.e., $G=\emptyset$). 

Now, assume $F,G\neq\emptyset$. We show that $\psi\wedge\chi\dashv\vdash \theta$, where
\[\theta:=  \bigbor_{h\in H}\Theta^\ast_{X_h}\text{ and }\{X_f\mid f\in F\}\cap\{X_g\mid g\in G\}=\{X_h\mid h\in H\}.\]
For $\psi\wedge\chi\vdash\theta$, we have the following derivation:
\allowdisplaybreaks
\[\begin{array}{rll}
(1)&\psi\wedge\chi\\
(2)&\displaystyle\Big(\bigbor_{f\in F}\Theta^\ast_{X_f}\Big)\wedge\Big(\bigbor_{g\in G}\Theta^\ast_{X_g}\Big)\\
(3)&\displaystyle\bigbor_{f\in F}\bigbor_{g\in G}\left(\Theta^\ast_{X_f}\wedge\Theta^\ast_{X_g}\right)\\
(4)&\displaystyle\Big(\mathop{\bigbor_{(f,g)\in F\times G}}_{X_f\neq X_g}\left(\Theta^\ast_{X_f}\wedge\Theta^\ast_{X_g}\right)\Big)\bor\Big(\mathop{\bigbor_{(f,g)\in F\times G}}_{X_f=X_g}\left(\Theta^\ast_{X_f}\wedge\Theta^\ast_{X_g}\right)\Big)\\
\end{array}
\]
\[\begin{array}{rll}
(5)&\displaystyle\big(\bot\wedge\nem\big)\bor\mathop{\bigbor_{(f,g)\in F\times G}}_{X_f=X_g}\left(\Theta^\ast_{X_f}\wedge\Theta^\ast_{X_g}\right)&(\sctri)\quad\quad\quad\quad\quad\quad\quad\quad\\
(6)&\displaystyle\mathop{\bigbor_{(f,g)\in F\times G}}_{X_f=X_g}\left(\Theta^\ast_{X_f}\wedge\Theta^\ast_{X_g}\right)&(\sctre)\\
(7)&\displaystyle\bigbor_{h\in H}\Theta^\ast_{X_h}&(\ce, \bore)
\end{array}
\]
\normalsize
For the other direction $\theta\vdash\psi\wedge\chi$, we have the following derivation:
\allowdisplaybreaks
\[
\begin{array}{rll}
(1) & \displaystyle \bigbor_{h\in H}\Theta^\ast_{X_h}\\
(2)&\displaystyle\Big(\bigbor_{h\in H}\Theta^\ast_{X_h}\Big)\wedge\Big(\bigbor_{h\in H}\Theta^\ast_{X_h}\Big)&(\ci)\\
(3)&\displaystyle\Big(\bigbor_{f\in F}\Theta^\ast_{X_f}\Big)\wedge\Big(\bigbor_{g\in G}\Theta^\ast_{X_g}\Big)&(\text{\bori,  $H\subseteq F,G$})\\
(4)&\displaystyle\psi\wedge\chi
\end{array}
\]
\end{proof}

\subsection{\ECL}\label{sec:ctl}


We will give a complete axiomatization of \ECL in the style of natural deduction. For this end, let us review our  proof above of the Completeness Theorem for \PT.  In a crucial step of the proof we transformed a formula into its disjunctive normal form $\bigvee_{f\in F}\Theta^\ast_{X_f}$. Each disjunct $\Theta^\ast_{X_f}$ is a formula in the language of  \ECL, but \ECL is a fragment of \PT that does not have the Boolean disjunction $\vee$ in the language, so we seem to be in trouble. Our trick is that we view the set $\{\Theta^\ast_{X_f}\mid f\in F\}$ of formulas, rather than the disjunction of this set, as a \emph{weak} normal form for formulas in the language of \ECL.  On the basis of this plan, we can  axiomatize \ECL and prove the completeness theorem.



We will define a natural deduction system of \ECL in which  every formula $\phi$ is essentially provably equivalent to its disjunctive normal form $\bigvee_{f\in F}\Theta^\ast_{X_f}$. In particular, we will be able to essentially derive the provable equivalence between the non-emptiness \nem and its disjunctive normal form $\bigvee_{\emptyset\neq Y\subseteq 2^N}\Theta^\ast_Y$. The behavior of the usual Introduction Rule and  Elimination Rule  of the Boolean disjunction $\vee$ (\bori and \bore)  will be simulated by two Strong Elimination Rules (\sen and \see) that do not involve $\vee$.

To define the Strong Elimination Rules, we will need to specify a particular occurrence of a subformula inside a formula. For this purpose, we identify a formula in the language of \ECL with a finite string of symbols. A propositional variable $p_i$ and the non-emptiness \nem are symbols and the other symbols are $\wedge,\sor,\neg$. Starting from the leftmost symbol, we number each symbol in a formula with a positive integer, as in the following example:
\begin{center}
\begin{tabular}{cccccccc}
$\nem$&$\sor$&$($&$\neg$&$p_1$&$\wedge$&$\nem$&$)$\vspace{4pt}\\
1&2&3&4&5&6&7&8
\end{tabular}
\end{center}
%
%
If the $m$th symbol of a formula $\phi$ starts a string $\psi$ which is a subformula of $\phi$, we denote the subformula  $[\psi,m]_\phi$, or simply $[\psi,m]$. We will sometimes refer to an occurrence of a formula $\chi$ inside a subformula $\psi$ of $\phi$. In this case we will  use the same counting for the subformula $\psi$, rather than restart the counting from $1$. We write $\phi(\beta/[\alpha,m])$ for the formula obtained from $\phi$ by replacing the occurrence of the subformula $[\alpha,m]$ by $\beta$. For example, for the  formula $\phi=\nem\sor(\neg p_1\wedge\nem)$,  the second occurrence of the non-emptiness \nem is denoted by  $[\nem,7]$, and the same notation also designates the occurrence of $\nem$ inside the subformula $\neg p_1\sor\nem$. The notation $\phi(\psi/[\nem,7])$ designates the formula $\nem\sor(\neg p_1\wedge \psi)$.

Below we present the natural deduction system of \ECL.

\begin{definition}[A natural deduction system of \ECL]\label{Natrual_Deduct_ctl}\ 

\begin{center}
\setlength{\tabcolsep}{6pt}
\renewcommand{\arraystretch}{1.8}
\setlength{\extrarowheight}{1pt}
\begin{tabular}{|C{0.95\linewidth}|}
\multicolumn{1}{c}{\textbf{AXIOM}}\\\hline\hline
Atomic excluded middle\\
\AxiomC{}\noLine\UnaryInfC{} \RightLabel{\exclmid}\UnaryInfC{$p_i\sor\neg p_i$}\noLine\UnaryInfC{}\DisplayProof\\\hline
\end{tabular}
\end{center}


\begin{center}
\setlength{\tabcolsep}{6pt}
\renewcommand{\arraystretch}{1.8}
\setlength{\extrarowheight}{2pt}
\begin{tabular}{|C{0.95\linewidth}|}
 \multicolumn{1}{c}{\textbf{RULES}}\\\hline\hline
All of the rules in \Cref{Natrual_Deduct_CTLe} that do not involve Boolean disjunction, i.e., the rules \ci, \ce, \sorwi, \sorwk, \sorwe, \sorws,  \com\sor, \ass\sor, \boti, \bote,  \sexfalso, \sctri, \sctrc, \dstrs$\wedge\sor$.\\\hline
Strong elimination rules\\
\def\ScoreOverhang{0.5pt}
\def\defaultHypSeparation{\hskip .1in}
\AxiomC{$D_0$}\noLine\UnaryInfC{$\phi$} \AxiomC{}\noLine\UnaryInfC{[$\phi(\Theta^\ast_{Y_1}/[\nem,m])$]}\noLine\UnaryInfC{$D_1$} \noLine\UnaryInfC{$\theta$} \AxiomC{\null}\noLine\UnaryInfC{\null}\noLine\UnaryInfC{$\ldots$}\AxiomC{}\noLine\UnaryInfC{[$\phi(\Theta^\ast_{Y_k}/[\nem,m])$]}\noLine\UnaryInfC{$D_k$}\noLine\UnaryInfC{$\theta$}\RightLabel{\sen}\QuaternaryInfC{$\theta$} \DisplayProof\\
{\small where $\{Y_1,\dots,Y_k\}$ is the set of all nonempty teams on a set $N$ of indices}\\
\AxiomC{$D_0$}\noLine\UnaryInfC{$\phi$} \AxiomC{}\noLine\UnaryInfC{[$\phi(\psi\wedge\bot/[\psi,m])$]}\noLine\UnaryInfC{$D_1$} \noLine\UnaryInfC{$\theta$} \AxiomC{}\noLine\UnaryInfC{}\noLine\UnaryInfC{[$\phi(\psi\wedge\nem/[\psi,m])$]}\noLine\UnaryInfC{$D_2$}\noLine\UnaryInfC{$\theta$} \RightLabel{\see}\TrinaryInfC{$\theta$}\noLine\UnaryInfC{} \DisplayProof\\\hline
\end{tabular}
\end{center}
\end{definition}


All rules that do not involve the Boolean disjunction $\vee$ in the natural deduction system of \PT (\Cref{Natrual_Deduct_CTLe}) are included in the above system. Thus all clauses in \Cref{PT_derivable_rules} that do not involve $\vee$ are also derivable in the above system. 

Let us ponder why we define the Strong Elimination Rules the way they are in our system in the absence of the Boolean disjunction $\vee$. The idea of the elimination rules for the conjunction is simply that if we have inferred $\phi\wedge\psi$, we can infer both $\phi$ and $\psi$. The elimination rule for disjunction in classical logic is that if we have $\phi\vee\psi$ and we can derive $\theta$ separately from both $\phi$ and $\psi$ then we have $\theta$. In both cases the elimination rule builds into the syntax of the proof the semantics of the logical operation. This is roughly the general idea of natural deduction, due to Gentzen. We followed the same line of thinking when we introduce our rules for Boolean disjunction $\vee$ in the system for \PT. Now, we moved to the weak logic \ECL which does not have Boolean disjunction in the language. But still, the non-emptiness \nem is semantically equivalent to the formula $\bigvee_{i=1}^k\Theta^\ast_{Y_i}$ (in the language of \PT), where $\{Y_1,\dots,Y_k\}$ is the set of all nonempty teams on a set $N$ of indices. To derive a formula $\theta$ from \nem given some assumptions, in \PT in the presence of the Boolean disjunction we could build up the following derivation:
\begin{center}\AxiomC{$D_0$}\noLine\UnaryInfC{$\nem$}\UnaryInfC{$\bigvee_{i=1}^k\Theta^\ast_{Y_i}$} \AxiomC{[$\Theta^\ast_{Y_1}$]}\noLine\UnaryInfC{$D_1$} \noLine\UnaryInfC{$\theta$} \AxiomC{\null}\noLine\UnaryInfC{\null}\noLine\UnaryInfC{$\ldots$}\AxiomC{[$\Theta^\ast_{Y_k}$]}\noLine\UnaryInfC{$D_k$}\noLine\UnaryInfC{$\theta$}\QuaternaryInfC{$\theta$} \DisplayProof \end{center}
Evidently such a derivation can be simulated in our deduction system of \ECL using the rule \sen. More generally, a formula $\phi$ whose $n$th symbol is \nem is semantically equivalent to the formula $\phi(\bigvee_{i=1}^k\Theta^\ast_{Y_i}/[\nem,m])$ (in the language of \PT), which, as $\vee$ distributes over all connectives, is semantically equivalent to $\bigvee_{i=1}^k\phi(\Theta^\ast_{Y_i}/[\nem,m])$. In the same way, to derive a formula $\theta$ from $\phi$ given some assumptions, it suffices to derive $\theta$ from each $\phi(\Theta^\ast_{Y_i}/[\nem,m])$. This is exactly what the Strong Elimination Rule \sen characterizes. Analogously, the rule \see characterizes the equivalence between a formula $\phi$ and $\phi(\psi\wedge(\bot\vee\nem)/[\psi,m])$.




We now prove the Soundness Theorem for the deduction system of \ECL.

\begin{theorem}[Soundness Theorem]\label{ctl_soundness}
For any set $\Gamma\cup\{\phi\}$ of formulas in the language of \ECL, we have 
\(\Gamma\vdash\phi\,\Longrightarrow\,\Gamma\models\phi.\)
\end{theorem}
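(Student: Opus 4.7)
The plan is to proceed by induction on the length of derivations, treating the axiom \exclmid and each rule in turn. For \exclmid and for every rule of \Cref{Natrual_Deduct_ctl} that is inherited verbatim from the system for \PT (i.e.\ all rules other than \sen and \see), the soundness justification given in \Cref{PDbornem_soundness} transfers without change, since none of those arguments made any use of the presence of the Boolean disjunction in the object language. The substantive work is therefore concentrated in the two Strong Elimination Rules.

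For \sen, I would fix an index set $N = \{i_1, \dots, i_n\}$ and let $Y_1, \dots, Y_k$ enumerate all nonempty teams on $N$. The semantic key is the equivalence
\[
\nem \;\equiv\; \bigbor_{i=1}^{k} \Theta^{\ast}_{Y_i},
\]
which follows from \Cref{X_ThetaX_nem}(i) together with locality: a team $X$ satisfies the right-hand side iff $X\upharpoonright N = Y_i$ for some nonempty $Y_i$, iff $X$ is itself nonempty. Combining this with compositionality of team semantics, together with the readily verified semantic distributivities of $\vee$ over $\wedge$ and over $\sor$, gives (as a semantic identity in the extended language $\ECL + \vee$)
\[
\phi \;\equiv\; \bigbor_{i=1}^{k} \phi\bigl(\Theta^{\ast}_{Y_i}/[\nem,m]\bigr).
\]
So if $X\models\phi$, then $X\models \phi(\Theta^{\ast}_{Y_i}/[\nem,m])$ for some $i$, and the induction hypothesis applied to the subderivation $D_i$ yields $X\models\theta$. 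For \see the template is identical: since $\bot\bor\nem$ is valid one has $\psi \equiv \psi\wedge(\bot\bor\nem)$, which by distributivity of $\vee$ over $\wedge$ gives $\psi \equiv (\psi\wedge\bot)\bor(\psi\wedge\nem)$; commuting the substitution past $\vee$ produces
\[
\phi \;\equiv\; \phi(\psi\wedge\bot/[\psi,m]) \;\bor\; \phi(\psi\wedge\nem/[\psi,m]),
\]
and the conclusion then follows exactly as in the case of \sen using $D_1$ and $D_2$.

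The main delicate point, and the one I expect to be the chief obstacle, is justifying the two moves used above at a semantic level: (a) replacement of a subformula at a marked position by a semantically equivalent formula preserves meaning; and (b) the Boolean disjunction commutes with every connective of $\ECL$ when pulled out of a marked position. Both facts are established by a routine induction on the syntactic path from the root of $\phi$ down to the marked occurrence $m$. Because $\ECL$-formulas are in negation normal form, the marked position is never under a negation and so no monotonicity/polarity issue arises; the induction therefore only needs to check the step cases $\wedge$ and $\sor$, both of which reduce to the distributivities already noted. Once this congruence-and-distribution lemma is in hand, the soundness arguments for \sen and \see are as sketched, completing the induction.
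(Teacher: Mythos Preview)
Your proposal is correct and takes essentially the same approach as the paper: both defer the inherited rules to the soundness proof for \PT, and both reduce soundness of \sen and \see to a semantic entailment $\phi\models\bigbor_i\phi^\ast_i$ (resp.\ $\phi\models\phi^\ast_+\bor\phi^\ast_-$) established by the same induction on the subformula path through $\wedge$ and $\sor$. The only cosmetic difference is that the paper proves just this one-directional entailment, which suffices, whereas you state the full equivalence; your packaging of (a) replacement-by-equivalent and (b) pulling $\bor$ past $\wedge,\sor$ as a separate lemma is exactly what the paper's subformula induction accomplishes inline.
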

\begin{proof}
We show that for each derivation $D$ with the conclusion $\phi$ and the hypotheses in $\Gamma$ we have  $\Gamma\models\phi$.
We only verify the cases where the Strong Elimination Rules are applied. The other cases follow from the Soundness Theorem for \PT.

\sen: Put $\phi^\ast_i=\phi(\Theta^\ast_{Y_i}/[\nem,m])$ for each $i\in\{1,\dots,k\}$. Assume that $D_0,D_1,\dots,D_k$ are derivations for $\Pi_0\vdash\phi$,\, $\Pi_1,\phi^\ast_1\vdash\theta$, ..., $\Pi_k,\phi^\ast_k\vdash\theta$, respectively. We show that $\Pi_0,\Pi_1,\dots,\Pi_k\models\theta$ follows from the induction hypothesis $\Pi_0\models\phi$,\, $\Pi_1,\phi^\ast_1\models\theta$, ..., $\Pi_k,\phi^\ast_k\models\theta$. This reduces to showing that $\phi\models\phi^\ast_1\vee\dots\vee\phi^\ast_k$ by induction on the subformulas $\psi$ of $\phi$.

Case $\psi=\nem$. By the locality property, $X\models\nem\iff X\upharpoonright N \models\nem$ for any team $X$. Now, since $\{Y_1,\dots,Y_k\}$ is the set of all nonempty teams on $N$, we have $X\upharpoonright N\models\nem\iff X\upharpoonright N\models \Theta^\ast_{Y_i}$ for some $i\in\{1,\dots,k\}$. Hence $\nem\models\Theta^\ast_{Y_1}\vee\dots\vee\Theta^\ast_{Y_k}$.

If $\psi$ is $p_j$ or $\neg p_j$, then $\psi^\ast_i=\psi$ for each $i\in\{1,\dots,k\}$ and $\psi\models\psi^\ast_1\vee\dots\vee\psi^\ast_k$ holds trivially.

If $\psi=\theta\sor\chi$ and without loss of generality we assume that the occurrence of the $\nem$ is in the subformula $\theta$. Then by the induction hypothesis we have  $\theta\models \theta^\ast_1\vee\dots\vee\theta^\ast_k$. Thus $\theta\sor\chi\models (\theta^\ast_1\vee\dots\vee\theta^\ast_k)\sor\chi\models (\theta^\ast_1\sor\chi)\vee\dots\vee(\theta^\ast_k\sor\chi)$.

The case $\psi=\theta\wedge\chi$ is proved similarly.

\see: Put $\phi^\ast_+=\phi(\psi\wedge\nem/[\psi,m])$ and $\phi^\ast_-=\phi(\psi\wedge\bot/[\psi,m])$. Assume that $D_0,D_1$ and $D_2$ are derivations for $\Pi_0\vdash\phi$,\, $\Pi_1,\phi^\ast_+\vdash\theta$ and $\Pi_2,\phi^\ast_-\vdash\theta$, respectively. We show that $\Pi_0,\Pi_1,\Pi_2\models\theta$ follows from the induction hypothesis $\Pi_0\models\phi$,\, $\Pi_1,\phi^\ast_+\models\theta$ and $\Pi_2,\phi^\ast_-\models\theta$. This is reduced to showing that $\phi\models\phi^\ast_+\vee\phi^\ast_-$ by induction on the subformulas $\delta$ of $\phi$. 

If $\delta$ is an atom and $\delta\neq [\psi,m]$, then $\delta^\ast_+=\delta=\delta^\ast_-$ and $\delta\models\delta^\ast_+\vee\delta^\ast_-$ holds trivially.

If $\delta=[\psi,m]$, then  $\delta^\ast_+=\delta\wedge\nem$ and $\delta^\ast_-=\delta\wedge\bot$. Since $\models\nem\vee\bot$, we have $\delta\models\delta\wedge(\nem\vee\bot)\models(\delta\wedge\nem)\vee(\delta\wedge\bot)$.

The induction steps are proved analogously to the \sen case.
\end{proof}

In the remainder of this section we prove the Completeness Theorem for our system. This proof is similar to the completeness proof for \PD that we gave in \cite{VY_PD}. The reader may compare this section with Section 4.3 in \cite{VY_PD}. When proving the Completeness Theorem for the deduction system of \PT, we transformed  a formula into its disjunctive normal form. Here in \ECL we follow essentially the same idea. But in the absence of the Boolean disjunction we will not be able to express the relevant disjunctive normal form in the logic. Instead, we work with the weak normal form (i.e., the set of all disjuncts of a disjunctive normal form) and the behavior of the (strong) normal form can be simulated by using the Strong Elimination Rules \sen and \see. The disjuncts of the disjunctive normal of a formula (or elements in the weak normal form) can be obtained from what we call \emph{strong realizations}. Our strong realizations are analogues of the ``resolutions" in \cite{Ciardelli_dissertation}, and they are more complex than the ``(weak) realizations" we defined in \cite{VY_PD}. These strong realizations will play a crucial role in our argument. Let us now define this notion formally.

 Let $\alpha\in\{p_i,\neg p_i,\nem,\bot\}$ be an atom and $Y\subseteq 2^N$ a team on a set $N$ of indices such that $Y\models\alpha$. A \emph{strong realization} $\alpha^\ast_Y$ of $\alpha$ over $Y$ is defined as 
 \[\alpha^\ast_Y:=\Theta^\ast_Y.\]
Let 
\(o=\langle [\alpha_1,m_1],\,\dots,\,[\alpha_c,m_c]\rangle\)
be a sequence of some of the occurrences of atoms in a formula $\phi$ in the language of \ECL. A \emph{strongly realizing sequence of $\phi$ over $o$} is a sequence $\Omega=\langle Y_1,\dots,Y_{c}\rangle$ such that $Y_i\models\alpha_i$ for each $i\in\{1,\dots,c\}$. We call the formula $\phi_\Omega^\ast$ defined 
as follows a \emph{strong realization of $\phi$ over $o$}: 
\[\phi_{\langle Y_1,\dots,Y_{c}\rangle}^\ast:=\phi((\alpha_1)_{Y_1}^\ast/[\alpha_1,m_1],\dots,(\alpha_c)_{Y_c}^\ast/[\alpha_c,m_c]).\]
Let $O$ be the sequence of all occurrences of all atoms in $\phi$. A strongly realizing sequence of $\phi$ over $O$ is called a \emph{maximal strongly realizing sequence}. A strong realization $\phi^\ast_\Omega$ over $O$ is called a \emph{strong realization} of $\phi$. 

For example, consider the formula $\phi=\nem\sor(\neg p_1\wedge\nem)$. Let $Y_1$ and $Y_2$ be two nonempty teams and $X=\{\{(1,0)\}\}$ a team on $\{1\}$. Over $o=\langle[\nem,7]\rangle$ the sequence $\langle Y_1\rangle$ is a strongly realizing sequence of $\phi$ and the formula $\nem\sor(\neg p_1\wedge\nem^\ast_{Y_1})$  is a strong realization of $\phi$. Both $\nem^\ast_{Y_1}\sor((\neg p_1)^\ast_X\wedge\nem^\ast_{Y_2})$ and $\nem^\ast_{Y_2}\sor((\neg p_1)^\ast_X\wedge\nem^\ast_{Y_1})$ are strong realizations of $\phi$. Note that a formula always has at least one atom, so its maximal strongly realizing sequence is always a nonempty sequence.

In the next lemma we prove that every formula is semantically equivalent to the Boolean disjunction of all of its strong realizations over an arbitrary sequence of some occurrences of atoms, particularly of all its maximal strongly realizing sequences.

\begin{lemma}\label{WNF_ctl_semantic}
Let $\phi$ be a formula in the language of \ECL and $\Lambda$ the set of its strongly realizing sequences over a sequence $o$. 
Then 
\(\displaystyle\phi\equiv \bigbor_{\Omega\in \Lambda}\phi^\ast_\Omega.\)
\end{lemma}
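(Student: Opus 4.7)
The plan is to prove the equivalence by structural induction on $\phi$, using \Cref{X_ThetaX_nem} at the base and the distributivity of Boolean disjunction over conjunction and tensor disjunction in the inductive step. First, fix a finite index set $N$ containing every propositional variable occurring in $\phi$, and restrict the teams $Y_i$ appearing in strongly realizing sequences to subteams of $2^N$; the Locality Property ensures this loses no generality, while also guaranteeing that $\Lambda$ is finite so that $\bigbor_{\Omega \in \Lambda} \phi^*_\Omega$ is a genuine formula.

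For the base case, $\phi$ is an atom in $\{p_i, \neg p_i, \nem, \bot\}$. If $o = \langle\rangle$, then $\Lambda = \{\langle\rangle\}$ with $\phi^*_{\langle\rangle} = \phi$, and the claim is trivial. Otherwise $o = \langle [\phi, 1]\rangle$, and we must show $\phi \equiv \bigbor_{Y \subseteq 2^N,\, Y \models \phi} \Theta^*_Y$. Given $X \models \phi$, set $Y := X \upharpoonright N$; then $Y \models \phi$ by locality, and by \Cref{X_ThetaX_nem}(i) together with locality we obtain $X \models \Theta^*_Y$. Conversely, $X \models \Theta^*_Y$ forces $X \upharpoonright N = Y$ by \Cref{X_ThetaX_nem}(i), whence $X \models \phi$ again by locality.

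For the inductive step, suppose $\phi = \phi_1 \circ \phi_2$ with $\circ \in \{\wedge, \sor\}$. Partition $o$ into the sub-sequences $o_1$ and $o_2$ of occurrences falling inside $\phi_1$ and $\phi_2$, respectively. Each $\Omega \in \Lambda$ then decomposes uniquely as $(\Omega_1, \Omega_2) \in \Lambda_1 \times \Lambda_2$ with $\phi^*_\Omega = (\phi_1)^*_{\Omega_1} \circ (\phi_2)^*_{\Omega_2}$. The induction hypothesis yields $\phi_i \equiv \bigbor_{\Omega_i \in \Lambda_i} (\phi_i)^*_{\Omega_i}$ for $i = 1, 2$, and applying the (semantically valid) distributive laws of $\vee$ over $\wedge$ and over $\sor$ gives
\[\phi \equiv \Big(\bigbor_{\Omega_1 \in \Lambda_1} (\phi_1)^*_{\Omega_1}\Big) \circ \Big(\bigbor_{\Omega_2 \in \Lambda_2} (\phi_2)^*_{\Omega_2}\Big) \equiv \bigbor_{\Omega_1,\Omega_2} \big((\phi_1)^*_{\Omega_1} \circ (\phi_2)^*_{\Omega_2}\big) = \bigbor_{\Omega \in \Lambda} \phi^*_\Omega.\]

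The main delicate point is bookkeeping: one has to check carefully that the positional indexing of atom occurrences behaves correctly under the decomposition $\phi = \phi_1 \circ \phi_2$, so that each occurrence in $o$ falls unambiguously inside exactly one of $\phi_1, \phi_2$ and the strong realization operation commutes with this split. Beyond that, the argument is essentially mechanical, resting on the exact characterization of subteams by $\Theta^*_Y$ provided in \Cref{X_ThetaX_nem}.
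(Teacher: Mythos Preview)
Your proof is correct and follows essentially the same approach as the paper's: structural induction on $\phi$, with the base case handled via \Cref{X_ThetaX_nem} (the paper cites the closely related \Cref{comp_CLTe_main_lm}) and the inductive step via monotonicity of $\sor$ and $\wedge$ together with distributivity of $\bor$ over them. Your presentation is slightly more explicit about the bookkeeping (restricting to teams on $N$, splitting $o$ and $\Omega$ along $\phi_1\circ\phi_2$), but the argument is the same.
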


\begin{proof}
We prove the lemma by induction on the subformulas $\psi$ of $\phi$. Let $N$ be the set of indices of propositional variables occurring in $\phi$. 

Base case: $\psi$ is an atom. If the occurrence of $\psi$ is not listed in $o$, then $\psi^\ast_\Omega=\psi$ for all $\Omega\in\Lambda$ and  $\psi\equiv \bigvee_{\Omega\in\Lambda}\psi^\ast_\Omega$ holds trivially. Otherwise, the occurrence $\psi=[\psi,m_i]$ is in $o$ and the set $\mathcal{Y}=\{Y_i\mid \langle Y_1,\dots,Y_c\rangle\in \Lambda\}$ consists of all teams on $N$ that satisfy $\psi$. For any team $X$ on $N$, by \Cref{comp_CLTe_main_lm} we have $X\models\psi\iff X\in\mathcal{Y}\iff X\models \bigvee_{Y\in\mathcal{Y}}\Theta^\ast_{Y}\iff X\models\bigvee_{\Omega\in\Lambda}\psi^\ast_\Omega$.


The induction case $\psi=\theta\sor\chi$ follows from the induction hypothesis and the fact that 
\([\,A\models A'\text{ and }B\models B'\,] \Longrightarrow A\sor B\models A'\sor B'\)
and that
$A\sor(B\bor C)\models (A\sor B)\bor(A\sor C)$ for all formulas $A,B,C$. Analogously for the case $\psi=\theta\wedge\chi$.
\end{proof}

We will show that in our system one derives essentially the equivalence between a formula $\phi$ and the Boolean disjunction $\bigbor_{\Omega\in \Lambda}\phi^\ast_\Omega$ of its strong realizations over some sequence of occurrences of atoms. We first prove the direction that $\phi$ follows from $\bigbor_{\Omega\in \Lambda}\phi^\ast_\Omega$, which is simulated in our system by the derivation that each Boolean disjunct $\phi^\ast_\Omega$ implies $\phi$.

\begin{lemma}\label{se_2_form} 
If $\Omega$ is a strongly realizing sequence of a formula $\phi$ in the language of \ECL over a sequence of some occurrences of  atoms in $\phi$, then $\phi^\ast_\Omega\vdash\phi$.
\end{lemma}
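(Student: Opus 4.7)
My plan is to prove the claim by structural induction on $\phi$, showing that $\phi^\ast_\Omega \vdash \phi$ for every strongly realizing sequence $\Omega$ over every sequence $o$ of atom occurrences. The case split follows the syntax of $\phi$. At an atom $\phi = \alpha$, either its occurrence is absent from $o$---in which case $\phi^\ast_\Omega = \phi$ and there is nothing to prove---or it is present, in which case $\phi^\ast_\Omega = \alpha^\ast_Y = \Theta^\ast_Y$ with $Y \models \alpha$, and the task reduces to deriving $\alpha$ from $\Theta^\ast_Y$. At $\phi = \psi \wedge \chi$ or $\phi = \psi \sor \chi$, the sequence $o$ splits into subsequences $\Omega_\psi, \Omega_\chi$ according to which occurrences lie in $\psi$ versus $\chi$, and the inductive hypothesis will apply to each immediate subformula.

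For the classical atoms the base case is routine. When $\alpha = \bot$, the hypothesis $Y \models \bot$ forces $Y = \emptyset$, so $\Theta^\ast_\emptyset = \bot$ and the claim is immediate. When $\alpha \in \{p_i, \neg p_i\}$ and $Y = \emptyset$, I would first derive \wexfalso from $\sorwi$, $\com\sor$ and $\bote$ to conclude $\bot \vdash \alpha$. Otherwise $Y \neq \emptyset$, and by the definition of $Y \models \alpha$ each disjunct $C_s$ of $\Theta^\ast_Y$ contains $\alpha$ as a conjunct; one $\ce$ inside each disjunct followed by iterated $\sorwe$ (permitted because the conclusion $\alpha$ is classical and the only undischarged assumption of each sub-derivation, $C_s$ itself, is discharged) yields $\Theta^\ast_Y \vdash \alpha$.

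The hard part will be the case $\alpha = \nem$ with $Y$ nonempty, since $\sorwe$ cannot be used to collapse a tensor disjunction to the non-classical conclusion $\nem$. My plan is to invoke the strong elimination rule $\see$ on $\Theta^\ast_Y$ at its outermost position, taking the chosen subformula to be $\Theta^\ast_Y$ itself. The case $\Theta^\ast_Y \wedge \nem$ is dispatched by a single $\ce$. In the case $\Theta^\ast_Y \wedge \bot$, the key trick is to recognise that the convention $\bigsor\emptyset := \bot$ identifies $\bot$ with $\Theta^\ast_\emptyset$; after commuting the conjunction we have $\Theta^\ast_\emptyset \wedge \Theta^\ast_Y$ with $\emptyset$ and $Y$ distinct teams on a common index set, so $\sctri$ fires and produces $\bot \wedge \nem$, from which $\sexfalso$ finally delivers $\nem$.

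For the inductive steps the derivations are short. For $\phi = \psi \wedge \chi$ I would apply $\ce$ twice to $\psi^\ast_{\Omega_\psi} \wedge \chi^\ast_{\Omega_\chi}$, use the inductive hypothesis on each conjunct, and close with $\ci$. For $\phi = \psi \sor \chi$ the IH supplies derivations $[\chi^\ast_{\Omega_\chi}] \vdash \chi$ and $[\psi^\ast_{\Omega_\psi}] \vdash \psi$ whose only open assumption is the one that $\sorws$ will discharge, so the side condition of $\sorws$ (which asks for undischarged assumptions to be classical only) holds vacuously. A first $\sorws$ replaces $\chi^\ast_{\Omega_\chi}$ by $\chi$, then $\com\sor$ and a second $\sorws$ replace $\psi^\ast_{\Omega_\psi}$ by $\psi$, and a final $\com\sor$ produces $\psi \sor \chi$.
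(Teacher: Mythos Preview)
Your proof is correct and follows the same inductive structure as the paper's. The paper leaves the induction steps to the reader and dismisses the $\nem$ base case with ``derived by a similar argument''; your explicit route via \see (splitting $\Theta^\ast_Y$ into $\Theta^\ast_Y\wedge\nem$ and $\Theta^\ast_Y\wedge\bot$) followed by \sctri and \sexfalso is a clean way to fill that gap---and indeed the naive analogy to the $p_{i_k}$ case would stall at the final collapsing step, since \sorwe requires a classical conclusion and $\nem$ is not classical.
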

\begin{proof}
We derive the lemma by induction on the subformulas $\psi$ of $\phi$.  Let $N=\{i_1,\dots,i_n\}$ be the set of indices of propositional variables occurring in $\phi$. 

The induction step is left to the reader. We only check the basic case when $\psi$ is an atom.  If the occurrence of $\psi$ is not listed in $o$, then $\psi^\ast_{\Omega}=\psi$ and the statment holds trivially. Now, assume otherwise. Then $\psi^\ast_{\Omega}=\Theta^\ast_X$ and $X$ is a team on  $N$ that satisfies $\psi$. If $\psi=\bot$, then $X=\emptyset$ and $\Theta^\ast_\emptyset=\bot$. Thus $\psi^\ast_{\Omega}\vdash\bot$ holds trivially. If $\psi=p_{i_k}$ and  $X=\emptyset$, then $\Theta^\ast_\emptyset=\bot\vdash p_{i_k}$ follows from  \wexfalso. If $X\neq\emptyset$, then we have $s(i_k)=1$ for all $s\in X$ and we derive $\Theta^\ast_X\vdash p_i$ as follows:
\[
\begin{array}{rll}
  (1)&\displaystyle\bigsor_{s\in X}(p_{i_1}^{s(i_1)}\wedge\dots\wedge p_{i_{k-1}}^{s(i_{k-1})}\wedge p_{i_k}\wedge p_{i_{k+1}}^{s(i_{k+1})}\wedge\dots\wedge p_{i_n}^{s(i_n)}\wedge\nem)\\
 (3)&\displaystyle\bigsor_{s\in X}p_{i_k}&(\ce, \sorws)\\
 (4)& p_{i_k}&(\sorwe)
\end{array}
\]
 The case $\psi=\neg p_{i_k}$ is proved analogously. If $\psi=\nem$, then $X\neq\emptyset$ and $\Theta^\ast_X\vdash \nem$ is derived by a similar argument.
\end{proof}

Next, we turn to prove that the Boolean disjunction $\bigbor_{\Omega\in \Lambda}\phi^\ast_\Omega$ of the strong realizations of some formula $\phi$ over some sequence $o$ of occurrences of atoms follows essentially from $\phi$. Following the idea we used when we defined the Strong Elimination Rules, we simulate the derivation by proving in our system that a formula $\theta$ follows from $\phi$, given that $\theta$ follows from each $\phi^\ast_\Omega$. We prove this  in steps. First of all, if $o$ is a sequence of one occurrence of the non-emptiness \nem, then the statement follows by applying the \sen rule. We now generalize this result and show that the statement holds if $o$ is a sequence of one occurrence of any atom.

\begin{lemma}\label{replacement_lm}
If $\delta\vdash\theta$, then $\phi(\delta/[\psi,m])\vdash\phi(\theta/[\psi,m])$. 
\end{lemma}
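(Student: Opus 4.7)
The plan is to prove the lemma by structural induction on $\phi$, tracing the designated occurrence $[\psi,m]$ as I descend into its parse tree. For the base case, when the occurrence $[\psi,m]$ coincides with the whole formula $\phi$, one has $\phi(\delta/[\psi,m])=\delta$ and $\phi(\theta/[\psi,m])=\theta$, so the conclusion is immediate from the hypothesis $\delta\vdash\theta$. Since the atoms $p_i,\neg p_i,\bot,\nem$ admit no proper subformulas, only the base case can apply to them, and in the induction step I only need to consider the two binary connectives.

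If $\phi=\phi_1\wedge\phi_2$ and, without loss of generality, the occurrence lies in $\phi_1$, then the derivation of $\phi_1(\theta/[\psi,m])\wedge\phi_2$ from the assumption $\phi_1(\delta/[\psi,m])\wedge\phi_2$ is routine: extract each conjunct by \ce, push $\phi_1(\delta/[\psi,m])$ to $\phi_1(\theta/[\psi,m])$ using the induction hypothesis, and recombine by \ci.

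The subtle case is $\phi=\phi_1\sor\phi_2$, since the system offers no unrestricted substitution rule for the tensor disjunction. Suppose the occurrence lies in $\phi_1$. Starting from $\phi_1(\delta/[\psi,m])\sor\phi_2$, I would apply $\com\sor$ to obtain $\phi_2\sor\phi_1(\delta/[\psi,m])$. The induction hypothesis supplies a derivation $D$ of $\phi_1(\theta/[\psi,m])$ whose only open assumption is $\phi_1(\delta/[\psi,m])$. Plugging $D$ into \sorws with $\phi_1(\delta/[\psi,m])$ as the discharged assumption yields $\phi_2\sor\phi_1(\theta/[\psi,m])$, and a final $\com\sor$ completes the step.

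The main obstacle is the side condition on \sorws, which requires every undischarged assumption of the sub-derivation to be a classical formula — a condition that $\phi_1(\delta/[\psi,m])$ will typically violate. What rescues the argument is the observation that the sole open assumption of the IH-derivation $D$ is $\phi_1(\delta/[\psi,m])$ itself, which is precisely the assumption that \sorws discharges. After the discharge, no other open assumptions remain in $D$, so the side condition is vacuously satisfied, and the induction goes through cleanly without any need to invoke the strong elimination rules \sen or \see.
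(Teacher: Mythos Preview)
Your proof is correct and follows essentially the same approach as the paper's: structural induction on $\phi$, with the $\wedge$ case handled by \ce/\ci and the $\sor$ case handled by \sorws (plus \com$\sor$). Your treatment is in fact more careful than the paper's, which simply says ``an application of the rule \sorws yields\ldots'' without commenting on the side condition; your observation that the sole open assumption of the IH-derivation is exactly the one discharged by \sorws, so that the classical-formulas-only constraint is vacuous, is the right justification and is confirmed by the soundness proof of \sorws (where $\Pi_2$ denotes the undischarged assumptions \emph{other than} the discharged hypothesis).
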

\begin{proof}
We prove the lemma by induction on the subformulas $\chi$ of $\phi$. 

If $\chi$ is an atom and $\chi\neq [\psi,m]$, then $\chi(\delta/[\psi,m])=\chi=\chi(\theta/[\psi,m])$ and trivially $\chi(\delta/[\psi,m])\vdash\chi(\theta/[\psi,m])$.

If $\chi=[\psi,m]$, then $\chi(\delta/[\psi,m])=\delta$ and $\chi(\theta/[\psi,m])=\theta$. Thus $\chi(\delta/[\psi,m])\vdash\chi(\theta/[\psi,m])$ follows directly from the assumption.

Suppose $\chi=\chi_0\sor\chi_1$. Without loss of generality we may assume that the occurrence of the formula $\psi$ is in the subformula $\chi_0$. By the induction hypothesis we have $\chi_0(\delta/[\psi,m])\vdash\chi_0(\theta/[\psi,m])$. An application of the rule \sorws yields $\chi_0(\delta/[\psi,m])\sor\chi_1\vdash\chi_0(\theta/[\psi,m])\sor\chi_1$.

The case $\chi=\chi_0\wedge\chi_1$ is proved analogously by applying \ce and \ci.
\end{proof}

\begin{lemma}\label{general_se_single_lm}
Let $[\alpha,m]$ be an occurrence of an atom in a formula $\phi$ and $\mathcal{Y}$ the set of all teams on $N=\{i_1,\dots,i_n\}$ that satisfy $\alpha$.
 For any set $\Gamma\cup\{\theta\}$ of formulas in the language of \ECL, if $\Gamma,\phi^\ast_{Y}\vdash\theta$ for all $Y\in \mathcal{Y}$, then $\Gamma,\phi\vdash\theta$.
\end{lemma}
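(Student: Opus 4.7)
The plan is to split on the form of the atom $\alpha$. The cases $\alpha = \nem$ and $\alpha = \bot$ are essentially immediate. For $\alpha = \nem$, the set $\mathcal{Y}$ is exactly the collection of all nonempty teams on $N$ and $\phi^\ast_Y = \phi(\Theta^\ast_Y/[\nem,m])$ by definition, so the conclusion is a direct application of \sen. For $\alpha = \bot$, we have $\mathcal{Y} = \{\emptyset\}$ and $\Theta^\ast_\emptyset = \bot$, so $\phi^\ast_\emptyset = \phi(\bot/[\bot,m]) = \phi$ and the hypothesis already \emph{is} the conclusion.

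The substantive case is $\alpha = p_{i_k}$, with $\alpha = \neg p_{i_k}$ being entirely symmetric. I would first apply \see at the occurrence $[p_{i_k},m]$, reducing the task to deriving $\theta$ from each of $\phi_- := \phi(p_{i_k}\wedge\bot/[p_{i_k},m])$ and $\phi_+ := \phi(p_{i_k}\wedge\nem/[p_{i_k},m])$. The branch $\phi_-$ is routine: $p_{i_k}\wedge\bot \dashv\vdash \bot$ is derivable (\ce one way, \wexfalso and \ci the other), so \Cref{replacement_lm} gives $\phi_- \dashv\vdash \phi(\bot/[p_{i_k},m]) = \phi^\ast_\emptyset$; since $\emptyset \models p_{i_k}$ we have $\emptyset \in \mathcal{Y}$ and this branch closes by hypothesis.

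For $\phi_+$, I would apply \sen at the newly-introduced $\nem$ inside the subformula $p_{i_k}\wedge\nem$, so that it suffices to show $\Gamma, \phi(p_{i_k}\wedge\Theta^\ast_Y/[p_{i_k},m]) \vdash \theta$ for every nonempty team $Y$ on $N$. When $Y \models p_{i_k}$, every disjunct of $\Theta^\ast_Y$ contains the conjunct $p_{i_k}$, so an argument like that in the proof of \Cref{se_2_form} (using \ce, \sorws, \sorwe) yields $\Theta^\ast_Y \vdash p_{i_k}$ and hence $p_{i_k}\wedge\Theta^\ast_Y \dashv\vdash \Theta^\ast_Y$; via \Cref{replacement_lm} and the fact that $Y \in \mathcal{Y}$, this sub-branch closes by hypothesis. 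When $Y \not\models p_{i_k}$, fix some $s^\ast \in Y$ with $s^\ast(i_k) = 0$: the $s^\ast$-disjunct $\theta_{s^\ast}$ of $\Theta^\ast_Y$ then contains $\neg p_{i_k}$, so distributing $p_{i_k}$ through the tensor disjunction via $\dstrs\wedge\sor$ produces a summand $p_{i_k}\wedge\theta_{s^\ast}$ from which \ce, \boti and \ci derive $\bot\wedge\nem$; rearranging the disjunction by $\com\sor$ and $\ass\sor$ to place this summand last, applying \sorws to replace it with $\bot\wedge\nem$, and finally \sctrc, yields $p_{i_k}\wedge\Theta^\ast_Y \vdash \bot\wedge\nem$. \Cref{replacement_lm} then gives $\phi(p_{i_k}\wedge\Theta^\ast_Y/[p_{i_k},m]) \vdash \phi(\bot\wedge\nem/[p_{i_k},m])$, and a straightforward side-induction on $\phi$ --- using \ce at $\wedge$-nodes and the chain \sorws, $\com\sor$, \sctrc at $\sor$-nodes --- shows $\phi(\bot\wedge\nem/[p_{i_k},m]) \vdash \bot\wedge\nem$, after which \sexfalso delivers $\theta$.

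The main obstacle I anticipate is precisely this last sub-case: it combines a distributivity calculation inside a tensor disjunction with a contradiction-propagation induction through arbitrary $\wedge/\sor$-contexts, and the classicality side conditions on \sorws must be monitored at every use. They all hold, but only because in every application the sole undischarged assumption in the relevant subderivation is the one being discharged by the rule itself, so the side condition is vacuously met.
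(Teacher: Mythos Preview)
Your proof is correct, but the paper handles the $\phi_+$ branch more economically by exploiting the freedom in the index set of \sen. Instead of invoking \sen with the full index set $N$ and then splitting on whether the resulting team $Y$ satisfies $p_{i_k}$, the paper applies \sen with index set $N\setminus\{i_k\}$. For each nonempty team $X$ on $N\setminus\{i_k\}$ one then has $p_{i_k}\wedge\Theta^\ast_X \vdash \Theta^\ast_Y$ via $\dstrs\wedge\sor$, where $Y = \{s\in 2^N \mid s\upharpoonright(N\setminus\{i_k\})\in X \text{ and } s(i_k)=1\}$; this $Y$ automatically satisfies $p_{i_k}$, so $Y\in\mathcal{Y}$ and the hypothesis closes every branch uniformly. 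The ``bad'' case $Y\not\models p_{i_k}$ simply never arises.

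What this buys the paper is the elimination of your entire contradiction-propagation argument: no need to distribute $p_{i_k}$ into $\Theta^\ast_Y$, isolate a clashing disjunct, derive $\bot\wedge\nem$, and then run the side induction showing $\phi(\bot\wedge\nem/[p_{i_k},m])\vdash\bot\wedge\nem$. Your route is sound --- the side conditions on \sorws are indeed vacuous at each use, and the side induction goes through --- but it is considerably longer and requires tracking the classicality conditions at several points. The paper's trick of shrinking the index set in \sen is worth internalising: it is the natural way to ensure that the realisations produced by \sen are already compatible with the ambient atom.
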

\begin{proof}
If $\alpha$ is the non-emptiness \nem, then the statement follows from \sen. If $\alpha=\bot$, then $\mathcal{Y}=\{\emptyset\}$, $\Theta^\ast_\emptyset=\bot$ and $\phi^\ast_\emptyset=\phi$. Thus the statement holds trivially. The nontrivial case is when $\alpha$ is $p_{i_k}$ or $\neg p_{i_k}$. We only give the proof for the case $\alpha=p_{i_k}$. The case  $\alpha=\neg p_{i_k}$ is proved similarly.

In view of \see, to show $\Gamma,\phi\vdash\theta$ it suffices to show that $\Gamma,\phi(p_{i_k}\wedge\nem/[p_{i_k},m])\vdash\theta$ and $\Gamma,\phi(p_{i_k}\wedge\bot/[p_{i_k},m])\vdash\theta$. To show the latter, first note that by the assumption we have $\Gamma,\phi(\Theta^\ast_\emptyset/[p_{i_k},m])\vdash\theta$, i.e., $\Gamma,\phi(\bot/[p_{i_k},m])\vdash\theta$. It then suffices to check that $\phi(p_{i_k}\wedge\bot/[p_{i_k},m])\vdash\phi(\bot/[p_{i_k},m])$. But this follows  from \Cref{replacement_lm}, as by \ce we have $p_{i_k}\wedge\bot\vdash\bot$.

To show the former, in view of \sen it suffices to derive $\Gamma,\phi(p_{i_k}\wedge\Theta^\ast_X/[p_{i_k},m])\vdash\theta$ for all nonempty teams $X$ on $N\setminus\{i_k\}$. By the rule \dstrs$\wedge\sor$, we have $p_{i_k}\wedge\Theta^\ast_{X}\vdash\Theta^\ast_{Y}$, where $Y\subseteq 2^N$ is defined as
\[Y=\{s:N\to 2\mid s\upharpoonright N\setminus\{i_k\}\in X\text{ and }s(i_k)=1\}.\]
It then follows from \Cref{replacement_lm} that $\phi(p_{i_k}\wedge\Theta^\ast_X/[p_{i_k},m])\vdash\phi(\Theta^\ast_Y/[p_{i_k},m])$. On the other hand, clearly $Y\in \mathcal{Y}$ and the assumptions implies that $\Gamma,\phi(\Theta^\ast_Y/[p_{i_k},m])\vdash\theta$. Hence we obtain $\Gamma,\phi(p_{i_k}\wedge\Theta^\ast_X/[p_{i_k},m])\vdash\theta$, as desired.
\end{proof}

Now we are ready to prove the full statement for an arbitrary sequence $o$ of occurrences of atoms.

\begin{lemma}\label{general_se_multi_lm}
Let $\Lambda$ be the set of all strongly realizing sequences of $\phi$ over a sequence $o$ of some occurrences of atoms in a formula $\phi$. For any set $\Gamma\cup\{\theta\}$ of formulas in the language of \ECL, if $\Gamma,\phi^\ast_\Omega\vdash\theta$ for all $\Omega\in \Lambda$, then $\Gamma,\phi\vdash\theta$.
\end{lemma}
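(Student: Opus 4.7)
The plan is to argue by induction on the length $c$ of the sequence $o = \langle[\alpha_1,m_1],\dots,[\alpha_c,m_c]\rangle$, peeling off one occurrence at a time and invoking the single-occurrence version \Cref{general_se_single_lm} at each step.

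For the base case $c=0$, the sequence $o$ is empty, so $\Lambda=\{\langle\rangle\}$ and $\phi^\ast_{\langle\rangle}=\phi$; the hypothesis $\Gamma,\phi^\ast_{\langle\rangle}\vdash\theta$ is then literally $\Gamma,\phi\vdash\theta$.

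For the inductive step, assume the lemma holds for all sequences of length $c$, and consider a sequence $o$ of length $c+1$. Let $o'=\langle[\alpha_1,m_1],\dots,[\alpha_c,m_c]\rangle$ be obtained from $o$ by removing the last occurrence $[\alpha_{c+1},m_{c+1}]$, and let $\Lambda'$ be the set of strongly realizing sequences over $o'$. For any $\Omega'=\langle Y_1,\dots,Y_c\rangle\in\Lambda'$, the substitutions performed at positions $m_1,\dots,m_c$ do not touch the subformula at position $m_{c+1}$, so the atom $\alpha_{c+1}$ still occurs as a designated subformula of $\phi^\ast_{\Omega'}$ (although its numerical position may have shifted); call this occurrence $[\alpha_{c+1},m']$. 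The strong realizations of $\phi$ over $o$ whose initial segment is $\Omega'$ are precisely the formulas $(\phi^\ast_{\Omega'})((\alpha_{c+1})^\ast_Y/[\alpha_{c+1},m'])$ for $Y$ satisfying $\alpha_{c+1}$ on $N$. By hypothesis, $\Gamma$ together with any such formula derives $\theta$, so \Cref{general_se_single_lm} applied to $\phi^\ast_{\Omega'}$ at the occurrence $[\alpha_{c+1},m']$ yields $\Gamma,\phi^\ast_{\Omega'}\vdash\theta$. Since this holds for every $\Omega'\in\Lambda'$, the induction hypothesis applied to the sequence $o'$ of length $c$ gives $\Gamma,\phi\vdash\theta$, completing the induction.

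The only subtle point — and the main thing that has to be verified carefully — is bookkeeping about the occurrences: after performing substitutions at the earlier positions $m_1,\dots,m_c$, the numerical index of the later occurrence $m_{c+1}$ changes, but the occurrence itself (as a distinguished subformula) persists and its realizations in $\phi^\ast_{\Omega'}$ are in bijection with the realizations of $\alpha_{c+1}$ in $\phi$. Once this is noted, the reduction to \Cref{general_se_single_lm} is immediate, and no further proof-theoretic work is needed.
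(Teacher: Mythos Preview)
Your proposal is correct and follows essentially the same approach as the paper: both argue by induction on the length of $o$, peeling off one occurrence at a time and invoking \Cref{general_se_single_lm} at each step. The only cosmetic difference is that the paper peels off the \emph{first} occurrence $[\alpha_1,m_1]$ at each stage (reducing to $o_1=\langle[\alpha_2,m_2],\dots,[\alpha_c,m_c]\rangle$), whereas you peel off the \emph{last} occurrence; your explicit remark about the position-index bookkeeping is a point the paper glosses over.
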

\begin{proof}
Let $o=\langle [\alpha_1,m_1],\,\dots,\,[\alpha_c,m_c]\rangle$. 
By the assumption, for any $Y_1$ that satisfies $\alpha_1$ we have
\[\begin{split}
\Gamma,\phi((\alpha_1)^\ast_{Y_1}/ [\alpha_1,m_1],(\alpha_2)^\ast_{X_2}/ [\alpha_2,m_2],\,&\dots,\,(\alpha_c)^\ast_{X_c}/[\alpha_c,m_c])\vdash\theta\\
&\text{ for all }\langle Y_1,X_2,\dots,X_c\rangle\in \Lambda
\end{split}\]
Then we conclude by \Cref{general_se_single_lm} that $\Gamma,\phi^\ast_\Omega\vdash\theta$ for all $\Omega\in \Lambda_1$, where $\Lambda_1$ is the set of all strongly realizing sequences of $\phi$ over a sequence $o_1=\langle [\alpha_2,m_2],\,\dots,\,[\alpha_c,m_c]\rangle$.

By repeating this argument $c$ times, we obtain $\Gamma,\phi\vdash\theta$ in the end.
\end{proof}

For the sake of the proof of the Completeness Theorem, we need to further transform each strong realization $\phi^\ast_\Omega$ into a formula $\Theta^\ast_{X_\Omega}$ in the normal form. To simplify notations, we write $\Theta^\ast_{\mathbf{0}}$ for the formula $\bot\wedge\nem$ and view $\mathbf{0}$ as a void team. Note that $\Theta^\ast_{\mathbf{0}}\neq\Theta^\ast_\emptyset=\bot$.

\begin{lemma}\label{WNF2SNF_ctl}
Let  $\Lambda$ be the set of all maximal strongly realizing sequences of a formula $\phi(p_{i_1},\dots,p_{i_n})$ in the language of \ECL.
\begin{description}
\item[(i)] For each $\Omega\in \Lambda$, we have \(\phi^\ast_\Omega\dashv\vdash\Theta^\ast_{X_\Omega}\) for some team $X_\Omega$  on $N=\{i_1,\dots,i_n\}$ or $X_\Omega=\mathbf{0}$.
\item[(ii)] Let $\Lambda_0=\{\Omega\in \Lambda\mid X_\Omega\neq\mathbf{0}\}$. We have \(\phi\,\equiv\,\bigbor_{\Omega\in \Lambda_0}\Theta^\ast_{X_\Omega}\).
\end{description}
\end{lemma}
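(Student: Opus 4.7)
The plan is to prove (i) by induction on the structure of $\phi$ and then derive (ii) by combining (i) with \Cref{WNF_ctl_semantic} via the Soundness Theorem. Before the induction, I would establish two key equivalences on ``normal-form blocks'' $\Theta^\ast_X$:
\begin{itemize}
\item[(a)] $\Theta^\ast_X \sor \Theta^\ast_Y \dashv\vdash \Theta^\ast_{X \cup Y}$ for all teams $X,Y \subseteq 2^N$ (using \sorwk, \sorws, \sorwe and the commutative/associative rules, essentially as in the tensor case of the \PT normal form proof);
\item[(b)] $\Theta^\ast_X \wedge \Theta^\ast_Y \dashv\vdash \Theta^\ast_X$ when $X=Y$, and $\Theta^\ast_X \wedge \Theta^\ast_Y \dashv\vdash \bot \wedge \nem$ when $X \neq Y$ (the second case is a direct application of \sctri).
\end{itemize}
I would also note that $\Theta^\ast_{\mathbf{0}} = \bot \wedge \nem$ absorbs under $\wedge$ via \sexfalso and under $\sor$ via \sctrc, so mixed cases reduce to the ``both nonempty'' case or collapse to $\mathbf{0}$.

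For part (i), the base case is when $\phi$ is an atom $\alpha$; since $\Omega$ is a single pair $\langle Y_1\rangle$ with $Y_1 \models \alpha$, we have $\phi^\ast_\Omega = \Theta^\ast_{Y_1}$ and take $X_\Omega = Y_1$ directly. For the induction step, if $\phi = \psi \sor \chi$ or $\phi = \psi \wedge \chi$, the maximal realizing sequence $\Omega$ splits into sub-sequences $\Omega_\psi$ and $\Omega_\chi$ for the atom occurrences in $\psi$ and $\chi$; by the induction hypothesis, $\psi^\ast_{\Omega_\psi}$ and $\chi^\ast_{\Omega_\chi}$ are each provably equivalent to a $\Theta^\ast_{X_{\Omega_\psi}}$ (or $\Theta^\ast_{\mathbf{0}}$), and similarly for $\chi$. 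Then (a), (b) together with the absorption of $\mathbf{0}$ show that $\phi^\ast_\Omega$ is provably equivalent to $\Theta^\ast_{X_{\Omega_\psi} \cup X_{\Omega_\chi}}$ (or $\Theta^\ast_{\mathbf{0}}$) in the $\sor$ case, and to either $\Theta^\ast_{X_{\Omega_\psi}}$ or $\Theta^\ast_{\mathbf{0}}$ in the $\wedge$ case.

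For part (ii), by \Cref{WNF_ctl_semantic} applied to the sequence $O$ of all atom occurrences, $\phi \equiv \bigbor_{\Omega \in \Lambda} \phi^\ast_\Omega$. By (i) and the Soundness Theorem (\Cref{ctl_soundness}), each $\phi^\ast_\Omega$ is semantically equivalent to $\Theta^\ast_{X_\Omega}$ (respectively to $\bot \wedge \nem$ when $X_\Omega = \mathbf{0}$). Since $\bot \wedge \nem$ is satisfied by no team, the disjuncts coming from $\Omega \in \Lambda \setminus \Lambda_0$ may be dropped semantically, leaving $\phi \equiv \bigbor_{\Omega \in \Lambda_0} \Theta^\ast_{X_\Omega}$ (with the empty Boolean disjunction $\bigbor \emptyset = \bot \wedge \nem$ when $\Lambda_0 = \emptyset$, corresponding to $\phi$ being unsatisfiable).

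The main obstacle is bookkeeping for the $\sor$ case of (i): one has to verify (a) purely syntactically in the \ECL system (no Boolean disjunction available), and carefully track what happens when either side degenerates to $\Theta^\ast_{\mathbf{0}}$. The required derivations are essentially copies of those used in the last two cases of the \PT normal form proof (\Cref{DNF_CTLe}), specialised to the $\vee$-free setting; the rules \sorwk, \sorws, \sorwe, \sctri, \sctrc, and \sexfalso are exactly what is needed, and the fact that each $\Theta^\ast_X$ contains only classical conjuncts tensored with $\nem$ keeps the undischarged-assumption side conditions of \sorwe and \sorws satisfied throughout.
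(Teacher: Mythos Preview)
Your proposal is correct and follows essentially the same approach as the paper's own proof: induction on subformulas for (i), using \sorws (with \com$\sor$) to propagate the induction hypothesis under $\sor$, then reducing $\Theta^\ast_X\sor\Theta^\ast_Y$ and $\Theta^\ast_X\wedge\Theta^\ast_Y$ to a single $\Theta^\ast_Z$ via exactly the rules you name (\sorwk, \sorwe for the union case; \ci/\ce, \sctri for the conjunction case; \sexfalso and \sctrc for the $\mathbf{0}$-absorption cases), and then deriving (ii) from (i), Soundness, and \Cref{WNF_ctl_semantic}. The only presentational difference is that you isolate (a), (b), and the $\mathbf{0}$-absorption facts as preliminary observations, whereas the paper establishes them inline during the induction cases.
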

\begin{proof}
(i) We prove the lemma by induction on the subformulas $\psi$ of $\phi$. If $\psi$ is an atom, then $\psi^\ast_\Omega=\Theta^\ast_{X_\Omega}$ for some team $X_\Omega$ on $N$ that satisfies $\psi$ and trivially $\psi^\ast_\Omega\dashv\vdash\Theta^\ast_{X_\Omega}$.

If $\psi=\delta\sor\chi$, then by the induction hypothesis, we have
\begin{equation}\label{SWNF_ctl_eq1}
\delta^\ast_{\Omega}\dashv\vdash\Theta^\ast_{X_{\Omega}}\text{ and }\chi^\ast_{\Omega}\dashv\vdash\Theta^\ast_{Y_{\Omega}},
\end{equation}
By \sorws we have  
\(\delta^\ast_{\Omega}\sor \chi^\ast_{\Omega}\dashv\vdash\Theta^\ast_{X_{\Omega}}\sor \Theta^\ast_{Y_{\Omega}}\). It then suffices to show that $\Theta^\ast_{X_{\Omega}}\sor \Theta^\ast_{Y_{\Omega}}\dashv\vdash\Theta^\ast_{Z}$ for some team $Z$ on $N$.

If $X_{\Omega}=\mathbf{0}$, then taking $Z=\mathbf{0}$, we derive $\Theta^\ast_{\mathbf{0}}\dashv\vdash\Theta^\ast_{\mathbf{0}}\sor \Theta^\ast_{Y_{\Omega}}$ by \sexfalso and  \sctrc. The case $Y_{\Omega}=\mathbf{0}$ is proved similarly. If $X_{\Omega}, Y_{\Omega}\neq\mathbf{0}$, then by \sorwk and \sorwe, we derive $ \Theta^\ast_{X_{\Omega}}\sor \Theta^\ast_{Y_{\Omega}}\dashv\vdash\Theta^\ast_{X_{\Omega}\cup Y_{\Omega}}$.

If $\psi=\delta\wedge\chi$, then  the induction hypothesis implies (\ref{SWNF_ctl_eq1}). By \ci and \ce we have 
\(\delta^\ast_{\Omega}\wedge \chi^\ast_{\Omega}\dashv\vdash\Theta^\ast_{X_{\Omega}}\wedge \Theta^\ast_{Y_{\Omega}}\). It then suffices to show that $\Theta^\ast_{X_{\Omega}}\wedge \Theta^\ast_{Y_{\Omega}}\dashv\vdash\Theta^\ast_{Z}$ for some team $Z$ on $N$.

If $X_{\Omega}=\mathbf{0}$, then taking $Z=\mathbf{0}$, we derive $\Theta^\ast_{\mathbf{0}}\dashv\vdash\Theta^\ast_{\mathbf{0}}\wedge \Theta^\ast_{Y_{\Omega}}$  by \sexfalso and  \ce. The case $Y_{\Omega}=\mathbf{0}$ is proved similarly. If $X_{\Omega}= Y_{\Omega}\neq\mathbf{0}$, then by \ce and \ci, we derive $ \Theta^\ast_{X_{\Omega}}\wedge \Theta^\ast_{Y_{\Omega}}\dashv\vdash\Theta^\ast_{X_{\Omega}}$.  If $X_{\Omega},Y_{\Omega}\neq\mathbf{0}$ and $X_{\Omega}\neq Y_{\Omega}$, then  we derive $\Theta^\ast_{\mathbf{0}}\dashv\vdash \Theta^\ast_{X_{\Omega}}\wedge \Theta^\ast_{Y_{\Omega}}$ by \sexfalso and \sctri.

(ii) It follows from the item  (i), the Soundness Theorem and  \Cref{WNF_ctl_semantic} that
\(\phi\,\equiv\, \bigbor_{\Omega\in \Lambda}\phi^\ast_\Omega\,\equiv\,\bigbor_{\Omega\in \Lambda}\Theta^\ast_{X_\Omega}.\)
If $\Lambda_0\neq\emptyset$, then the statement clearly follows, as $\Theta^\ast_{\mathbf{0}}\vee\psi=\big(\bot\wedge \nem\big)\bor\psi\equiv\psi$ for all formulas $\psi$. If $\Lambda_0=\emptyset$, then $\phi^\ast_\Omega\equiv\bot\wedge\nem$ for each $\Omega\in \Lambda$. Thus $\phi\equiv\bigbor_{\Omega\in \Lambda}\phi^\ast_\Omega\equiv\bigvee_{\Omega\in \Lambda}(\bot\wedge\nem)\equiv\bot\wedge\nem\equiv\bigvee\emptyset$.
\end{proof}


Finally, let us give the proof of the Completeness Theorem.

\begin{theorem}[Completeness Theorem]\label{completeness_ctl}
For any   formulas $\phi$ and $\psi$ in the language of \ECL, we have 
\(\phi\models\psi\,\Longrightarrow\,\phi\vdash\psi.\)
\end{theorem}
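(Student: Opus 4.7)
The plan is to reduce the completeness statement to a provability claim about strong realizations of $\phi$, and then invoke Lemma \ref{general_se_multi_lm}. Concretely, letting $\Lambda$ be the set of all maximal strongly realizing sequences of $\phi$ (i.e.\ those over the sequence $O$ of all occurrences of atoms in $\phi$), Lemma \ref{general_se_multi_lm} tells us that it is enough to prove $\phi^\ast_\Omega\vdash\psi$ for every $\Omega\in\Lambda$. So the whole task reduces to handling a single strong realization at a time.

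For each $\Omega\in\Lambda$, Lemma \ref{WNF2SNF_ctl}(i) supplies a team $X_\Omega$ on the index set $N$ of propositional variables occurring in $\phi$, or the void symbol $\mathbf{0}$, together with a provable equivalence $\phi^\ast_\Omega\dashv\vdash\Theta^\ast_{X_\Omega}$. I would split into two cases. In the degenerate case $X_\Omega=\mathbf{0}$, we have $\phi^\ast_\Omega\dashv\vdash\bot\wedge\nem$, and the rule \sexfalso immediately yields $\phi^\ast_\Omega\vdash\psi$. In the main case $X_\Omega\neq\mathbf{0}$, Lemma \ref{X_ThetaX_nem}(i) says that $X_\Omega$ itself satisfies $\Theta^\ast_{X_\Omega}$, hence also $\phi^\ast_\Omega$; combining the Soundness Theorem (\Cref{ctl_soundness}) with Lemma \ref{se_2_form} (which gives $\phi^\ast_\Omega\vdash\phi$), the team $X_\Omega$ satisfies $\phi$ and therefore, by hypothesis, also $\psi$.

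Now I would expand $\psi$ via Lemma \ref{WNF2SNF_ctl}(ii), which gives $\psi\equiv\bigvee_{\Omega'\in\Lambda_0^\psi}\Theta^\ast_{Y_{\Omega'}}$. Since $X_\Omega\models\psi$, there is some $\Omega'\in\Lambda_0^\psi$ with $X_\Omega\models\Theta^\ast_{Y_{\Omega'}}$, and then Lemma \ref{X_ThetaX_nem}(i) forces $X_\Omega=Y_{\Omega'}$. Chaining the provable equivalences of Lemma \ref{WNF2SNF_ctl}(i),
\[
\phi^\ast_\Omega\,\dashv\vdash\,\Theta^\ast_{X_\Omega}\,=\,\Theta^\ast_{Y_{\Omega'}}\,\dashv\vdash\,\psi^\ast_{\Omega'},
\]
and applying Lemma \ref{se_2_form} in the form $\psi^\ast_{\Omega'}\vdash\psi$ closes this case. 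With both cases discharged, Lemma \ref{general_se_multi_lm} delivers $\phi\vdash\psi$.

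The main obstacle, and the conceptual reason \ECL is more delicate than \PT, is that one cannot literally form $\bigvee_{\Omega\in\Lambda_0}\Theta^\ast_{X_\Omega}$ as an internal formula of \ECL; the Boolean-disjunction reasoning that drove the \PT completeness proof is unavailable. The Strong Elimination Rules \sen and \see, packaged into Lemma \ref{general_se_multi_lm}, are precisely what substitute for the missing $\vee$-elimination, letting us argue ``case by case over $\Omega$'' purely syntactically. The edge case in which $\psi$ is semantically equivalent to $\bot\wedge\nem$ (that is, $\Lambda_0^\psi=\emptyset$) is automatically absorbed into the first case above: soundness forces every $X_\Omega$ to be $\mathbf{0}$, and \sexfalso finishes each $\phi^\ast_\Omega\vdash\psi$ uniformly.
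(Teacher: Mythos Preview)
Your proposal is correct and follows essentially the same route as the paper: reduce via Lemma~\ref{general_se_multi_lm} to showing $\phi^\ast_\Omega\vdash\psi$ for each maximal strongly realizing sequence $\Omega$, use Lemma~\ref{WNF2SNF_ctl} to pass to $\Theta^\ast_{X_\Omega}$, match with some $\Theta^\ast_{Y_{\Omega'}}$ on the $\psi$-side (the paper packages this step as Lemma~\ref{comp_CLTe_main_lm}, you use Lemma~\ref{X_ThetaX_nem}(i) directly), and finish with Lemma~\ref{se_2_form} or \sexfalso. One small point to tighten: choose $N$ to be a common index set for the variables of \emph{both} $\phi$ and $\psi$ (as the paper does by writing $\phi(p_{i_1},\dots,p_{i_n})$ and $\psi(p_{i_1},\dots,p_{i_n})$), so that $X_\Omega$ and $Y_{\Omega'}$ live on the same domain and the equality $X_\Omega=Y_{\Omega'}$ is literally meaningful.
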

\begin{proof}
Suppose $\phi\models\psi$, where $\phi=\phi(p_{i_1},\dots,p_{i_n})$ and $\psi=\psi(p_{i_1},\dots,p_{i_n})$.
 By Lemma \ref{WNF2SNF_ctl} and the Soundness Theorem, we have
\begin{equation}\label{comp_ctl_eq1}
\phi\equiv\bigbor_{\Omega\in \Lambda_0}\Theta^{\ast}_{X_{\Omega}}\models\bigbor_{\Upsilon\in \Lambda_0'}\Theta^{\ast}_{Y_{\Upsilon}}\equiv\psi.
\end{equation}
where 
\begin{description}
\item[(i)] $\Lambda$, $\Lambda'$ are the sets of all strongly realizing sequences of $\phi$ and $\psi$, respectively and each $X_\Omega$ and $Y_\Upsilon$ are teams on $\{i_1,\dots,i_n\}$;
\item[(ii)] $\phi^\ast_\Omega\dashv\vdash\Theta^\ast_{X_\Omega}$ and $\psi^\ast_\Upsilon\dashv\vdash\Theta^\ast_{Y_\Upsilon}$ for all $\Omega\in\Lambda$ and $\Upsilon\in \Lambda'$;
\item[(iii)] $\Lambda_0=\{\Omega\in \Lambda\mid X_\Omega\neq\mathbf{0}\}$ and $\Lambda_0'=\{\Upsilon\in \Lambda'\mid Y_\Upsilon\neq\mathbf{0}\}$.
\end{description}

If $\Lambda_0,\Lambda_0'\neq\emptyset$, then for any $\Omega\in \Lambda$, by Lemma \ref{comp_CLTe_main_lm} there exists $\Upsilon\in \Lambda'$ such that $X_\Omega=Y_\Upsilon$. We then have
\[\phi^\ast_\Omega\vdash\Theta^{\ast}_{X_{\Omega}}=\Theta^{\ast}_{X_{\Upsilon}}\vdash \psi^\ast_\Upsilon\vdash\psi\]
by (ii) and \Cref{se_2_form}. Finally,  we obtain $\phi\vdash\psi$ by \Cref{general_se_multi_lm}.

If $\Lambda_0=\emptyset$, then for each $\Omega\in\Lambda$ we have $\phi^\ast_\Omega\dashv\vdash\Theta^\ast_{\mathbf{0}}=\bot\wedge\nem$. Then by \sexfalso we derive $\phi^\ast_\Omega\vdash\psi$ and $\phi\vdash\psi$ follows from \Cref{general_se_multi_lm} again. If $\Lambda'_0=\emptyset$, then  $\psi\equiv\bigvee\emptyset=\bot\wedge\nem$. But in view of (\ref{comp_ctl_eq1}), we must also have $\phi\equiv\bot\wedge\nem$ and $\Lambda_0=\emptyset$. This then reduces to the previous case.
\end{proof}

\subsection{\PInem, \PIncs and other extensions of \ECL}

The argument in the previous section can also be applied to axiomatize other propositional team logics obtained by adding new atoms with the empty team property to the language of \ECL, such as strong propositional independence logic (\PInem) and strong propositional inclusion logic (\PIncs). Throughout the section, we write \ECLp for an arbitrary such logic.   In this section, we will show how to generalize the method in the previous section to axiomatize \ECLp, and \PInem and \PInc in particular. 





What was crucial in the axiomatization of \ECL was the notion of a \emph{strong realization} of a formula. This notion can be generalized to richer languages, such as \PInem and \PIncs. A \emph{strong realization} $\alpha^\ast_Y$ of an atom $\alpha$ (such as independence atom and inclusion atom) over a team $Y$ that satisfies $\alpha$ is defined as $\alpha^\ast_Y:=\Theta^\ast_Y$.  A \emph{strongly realizing sequence of a formula $\phi$ over a sequence $o$} of some occurrences of atoms in $\phi$ and a \emph{strong realization} (over $o$) are defined the same way as in the logic \ECL, except that a richer language \LL may contain more atoms.

Our natural deduction system of \LL consists of all of the axioms and the rules from the system of \ECL (\Cref{Natrual_Deduct_ctl}), together with the Introduction Rule and the Elimination Rule (\alphai and \sealpha  presented below) that characterize the equivalence between an arbitrary new atom $\alpha$ and the Boolean disjunction $\bigvee_Y\Theta^\ast_Y$ of its strong realizations. 

\begin{definition}[A natural deduction system of \ECLp]\label{Natrual_Deduct_PInem}\

\begin{center}
\setlength{\tabcolsep}{6pt}
\renewcommand{\arraystretch}{1.8}
\setlength{\extrarowheight}{1pt}
\begin{tabular}{|C{0.95\linewidth}|}
\multicolumn{1}{c}{\textbf{AXIOM}}\\\hline\hline
Atomic excluded middle\\
\AxiomC{}\noLine\UnaryInfC{} \RightLabel{\exclmid}\UnaryInfC{$p_i\sor\neg p_i$}\noLine\UnaryInfC{}\DisplayProof\\\hline
\end{tabular}
\end{center}


\begin{center}
\setlength{\tabcolsep}{6pt}
\renewcommand{\arraystretch}{1.8}
\setlength{\extrarowheight}{2pt}
\begin{tabular}{|C{0.95\linewidth}|}
 \multicolumn{1}{c}{\textbf{RULES}}\\\hline\hline
All of the rules from \Cref{Natrual_Deduct_ctl}, together with the following two rules for each new atom $\alpha$ in \ECLp:\\\hline
Atom $\alpha$ introduction\\
\AxiomC{}\noLine\UnaryInfC{$D$}\noLine\UnaryInfC{$\Theta^\ast_Y$} \RightLabel{ \alphai}\UnaryInfC{$\alpha$} 
\DisplayProof \\
{\small where  $Y$ is a team on a set $N$ of indices that satisfies $\alpha$}\footnotemark\\\hline
Strong elimination rule for $\alpha$\\
\def\ScoreOverhang{0.5pt}
\def\defaultHypSeparation{\hskip .1in}
\AxiomC{$D_0$}\noLine\UnaryInfC{$\phi$} \AxiomC{}\noLine\UnaryInfC{[$\phi(\Theta^\ast_{Y_1}/[\alpha,m])$]}\noLine\UnaryInfC{$D_1$} \noLine\UnaryInfC{$\theta$} \AxiomC{\null}\noLine\UnaryInfC{\null}\noLine\UnaryInfC{$\ldots$}\AxiomC{[$\phi(\Theta^\ast_{Y_k}/[\alpha,m])$]}\noLine\UnaryInfC{$D_k$}\noLine\UnaryInfC{$\theta$} \RightLabel{\sealpha}\QuaternaryInfC{$\theta$} \DisplayProof\\
{\small where $\{Y_1,\dots,Y_k\}$ is the set of all teams on a set $N$ of indices that satisfy $\alpha$}
\\\hline
\end{tabular}
\end{center}
\end{definition}

\footnotetext{On the surface this looks like a confusion between syntax and semantics. However, it is as in the Conjunction Introduction Rule ``From $\phi$  and $\psi$ we infer $\phi\wedge\psi$". On the basic level the rules establish a connection between logical operations and their intended meaning. We could replace here the assumption ``$Y$ satisfies the atom $\alpha$" by explicitly listing teams on $N$ that satisfy $\alpha$, but that would be more cumbersome.}

Next, we prove the Soundness Theorem and the (Strong) Completeness Theorem for the system.

\begin{theorem}[Soundness Theorem]\label{pi_soundness}
For any  set $\Gamma\cup\{\phi\}$ of formulas in the language of \ECLp, we have 
\(\Gamma\vdash\phi\,\Longrightarrow\,\Gamma\models\phi.\)
\end{theorem}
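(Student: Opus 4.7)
The plan is to extend the proof of \Cref{ctl_soundness} by induction on the length of the derivation. Since the system for \ECLp consists of the rules from \Cref{Natrual_Deduct_ctl} together with, for each new atom $\alpha$, an introduction rule \alphai and a strong elimination rule \sealpha, all cases whose last step is a rule from the \ECL system are handled verbatim by the argument in the proof of \Cref{ctl_soundness}. The two genuinely new cases are \alphai and \sealpha, and these must be verified by hand. I would also rely on the locality property for all formulas of \ECLp, which follows by a routine inductive proof (as the new atoms have locality built into their truth definitions in \Cref{TS_PT}).

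For \alphai, suppose $D$ is a derivation witnessing $\Pi \vdash \Theta^\ast_Y$, where $Y$ is a team on $N$ with $Y \models \alpha$. The induction hypothesis gives $\Pi \models \Theta^\ast_Y$. Let $X$ be any team with $X \models \theta$ for every $\theta \in \Pi$. Then $X \models \Theta^\ast_Y$, and by locality together with \Cref{X_ThetaX_nem}(i) we get $X \upharpoonright N = Y$. Since $Y \models \alpha$, one more application of locality yields $X \models \alpha$, so $\Pi \models \alpha$ as required.

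For \sealpha, write $\phi^\ast_i := \phi(\Theta^\ast_{Y_i}/[\alpha,m])$, and suppose $D_0,\dots,D_k$ witness $\Pi_0 \vdash \phi$ and $\Pi_i,\phi^\ast_i \vdash \theta$ for $i = 1,\dots,k$. By the induction hypothesis, $\Pi_0 \models \phi$ and $\Pi_i,\phi^\ast_i \models \theta$. The crucial reduction is exactly as in the \sen case of \Cref{ctl_soundness}: it suffices to prove $\phi \models \phi^\ast_1 \bor \dots \bor \phi^\ast_k$, so that any team $X$ satisfying $\Pi_0 \cup \Pi_1 \cup \dots \cup \Pi_k$ must satisfy some $\phi^\ast_i$, and hence $\theta$. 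This reduction is established by induction on the subformulas $\psi$ of $\phi$: propositional variables and negated propositional variables are trivial; for the binary connectives $\sor$ and $\wedge$ one uses the same monotonicity observations used in the \sen case; and the only genuinely new base step is the case $\psi = [\alpha,m]$.

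The main obstacle, and essentially the only new content of the proof, is that base step: for a team $X$ satisfying $\alpha$, we must produce an $i$ with $X \models \Theta^\ast_{Y_i}$. This is precisely where the hypothesis ``$\{Y_1,\dots,Y_k\}$ is the set of \emph{all} teams on $N$ satisfying $\alpha$'' enters. By locality $X \models \alpha$ implies $X\!\upharpoonright\!N \models \alpha$, so $X\!\upharpoonright\!N = Y_j$ for some $j$; then by \Cref{X_ThetaX_nem}(i) together with locality we get $X \models \Theta^\ast_{Y_j}$. Thus $\alpha \models \bigbor_{i=1}^{k}\Theta^\ast_{Y_i}$, and the induction on subformulas of $\phi$ lifts this to $\phi \models \phi^\ast_1 \bor \dots \bor \phi^\ast_k$, completing the soundness verification for \sealpha.
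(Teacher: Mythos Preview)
Your proposal is correct and follows essentially the same approach as the paper: induction on derivations, with only the two new rules \alphai and \sealpha requiring verification, the latter reduced to $\phi\models\phi^\ast_1\bor\cdots\bor\phi^\ast_k$ by induction on subformulas. The paper cites \Cref{comp_CLTe_main_lm} rather than \Cref{X_ThetaX_nem}(i) for the key fact that $X\models\Theta^\ast_Y$ forces $X\upharpoonright N=Y$, and it suppresses the locality step you spell out, but the substance of the argument is the same.
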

\begin{proof}
We show that for each derivation $D$ with the conclusion $\phi$ and the hypotheses in $\Gamma$ we have  $\Gamma\models\phi$.
We only verify the cases when the rules \alphai and \sealpha are applied. 

\alphai: Assume that $D$ is a derivation for $\Pi\vdash\Theta^\ast_Y$ where $Y$ is a team on $N$ that satisfies the atom $\alpha$. We show that $\Pi\models \alpha$ follows from the induction hypothesis $\Pi\models\Theta^\ast_Y$. This is reduced to showing $\Theta^\ast_Y\models\alpha$. For any team $X$ such that $X\models \Theta^\ast_Y$, by \Cref{comp_CLTe_main_lm} we have $X=Y$. Thus $X\models\alpha$ follows from the assumption.

\sealpha: Put $\phi^\ast_i=\phi(\Theta^\ast_{Y_i}/[\alpha,m])$ for each $i\in\{1,\dots,k\}$. Assume that $D_0,D_1,\dots,D_k$ are derivations for $\Pi_0\vdash\phi$,\, $\Pi_1,\phi^\ast_1\vdash\theta$, ..., $\Pi_k,\phi^\ast_k\vdash\theta$, respectively. We show that $\Pi_0,\Pi_1,\dots,\Pi_k\models\theta$ follows from the induction hypothesis $\Pi_0\models\phi$,\, $\Pi_1,\phi^\ast_1\models\theta$, ..., $\Pi_k,\phi^\ast_k\models\theta$. This is reduced to showing that $\phi\models\phi^\ast_1\vee\dots\vee\phi^\ast_k$ by induction on $\phi$. If $\phi$ is the atom $\alpha$, then the statement follows from  \Cref{comp_CLTe_main_lm} and the choice of the $Y_i$'s. The other cases are left to the reader.
\end{proof}


\begin{theorem}[Completeness Theorem]\label{completeness_pi}
For any  formulas $\phi$ and $\psi$  in the language of \ECLp, we have 
\(\phi\models\psi\,\Longrightarrow\,\phi\vdash\psi.\)
\end{theorem}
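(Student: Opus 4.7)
The plan is to mimic the completeness proof for \ECL given in Theorem~\ref{completeness_ctl}, after first generalizing the key lemmas from Section~4.3 (namely \Cref{se_2_form}, \Cref{general_se_single_lm}, \Cref{general_se_multi_lm}, and \Cref{WNF2SNF_ctl}) from \ECL to \ECLp. The new atom rules \alphai and \sealpha have been designed precisely to make this lifting routine: \alphai simulates $\Theta^\ast_Y \vdash \alpha$ whenever $Y \models \alpha$, and \sealpha simulates the case-analysis on which team-shape the atom $\alpha$ actually has. Since all other atoms and connectives are unchanged, the inductive structure of each lemma carries over verbatim.

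First I would extend \Cref{se_2_form} to \ECLp: in the induction on subformulas, the only new base case is when $\psi = \alpha$ is a new atom of \ECLp. Here $\psi^\ast_\Omega = \Theta^\ast_Y$ with $Y \models \alpha$, and the derivation $\Theta^\ast_Y \vdash \alpha$ is a single application of \alphai. Next, I would extend \Cref{general_se_single_lm}: the cases $\alpha \in \{p_i, \neg p_i, \nem, \bot\}$ are handled as before using \see, \sen, and \wexfalso, while for any new atom $\alpha$ the statement is exactly the rule \sealpha applied to $\phi$. \Cref{replacement_lm} requires no change (its proof depends only on \ce, \ci, and \sorws), and so \Cref{general_se_multi_lm} lifts immediately: iterate the extended \Cref{general_se_single_lm} once for each occurrence of an atom listed in $o$.

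Then I would extend \Cref{WNF2SNF_ctl} to \ECLp. In the base case of an atom $\psi$, if $\psi$ is classical or \nem\ or \bot\ we use the \ECL argument, and if $\psi$ is a new atom $\alpha$ then by definition $\psi^\ast_\Omega = \Theta^\ast_Y$ for some team $Y$ on $N$ satisfying $\alpha$, so $\psi^\ast_\Omega \dashv\vdash \Theta^\ast_Y$ is trivial. The induction steps for $\sor$ and $\wedge$ are identical, relying only on \sorws, \sorwk, \sorwe, \ce, \ci, \sexfalso, \sctrc, and \sctri. Part~(ii) then follows from part~(i), the Soundness Theorem (\Cref{pi_soundness}), and \Cref{WNF_ctl_semantic}, whose proof also transfers directly because the induction steps there only use properties of $\sor$, $\wedge$ and the semantic definition of strong realizations, and whose base case for a new atom $\alpha = [\alpha,m]$ is exactly \Cref{comp_CLTe_main_lm} applied to the set $\mathcal{Y}$ of teams on $N$ satisfying $\alpha$.

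With these generalized lemmas in hand, the completeness proof is a verbatim copy of that of \Cref{completeness_ctl}. Given $\phi \models \psi$, apply the extended \Cref{WNF2SNF_ctl}(ii) to both sides to obtain $\phi \equiv \bigbor_{\Omega \in \Lambda_0} \Theta^\ast_{X_\Omega}$ and $\psi \equiv \bigbor_{\Upsilon \in \Lambda_0'} \Theta^\ast_{Y_\Upsilon}$; by the Soundness Theorem this entailment lifts to the normal forms, whence by \Cref{comp_CLTe_main_lm} every $X_\Omega$ coincides with some $Y_{\Upsilon}$. Then for each maximal strongly realizing sequence $\Omega$ of $\phi$ we chain $\phi^\ast_\Omega \vdash \Theta^\ast_{X_\Omega} = \Theta^\ast_{Y_\Upsilon} \vdash \psi^\ast_\Upsilon \vdash \psi$ using the extended \Cref{se_2_form}, and the extended \Cref{general_se_multi_lm} concludes $\phi \vdash \psi$. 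The degenerate cases $\Lambda_0 = \emptyset$ and $\Lambda_0' = \emptyset$ are handled exactly as in the proof of \Cref{completeness_ctl}, using \sexfalso. The main conceptual point to verify carefully is that the proof of \Cref{WNF_ctl_semantic} really extends to new atoms; this is the only place where the specific semantic clause for $\alpha$ enters, and it is settled uniformly by observing that the set of teams on $N$ satisfying $\alpha$ is exactly the index set appearing in \sealpha.
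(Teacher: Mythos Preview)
Your proposal is correct and follows essentially the same approach as the paper: generalize each of the key lemmas (\Cref{se_2_form}, \Cref{general_se_single_lm}, \Cref{general_se_multi_lm}, \Cref{WNF2SNF_ctl}, and \Cref{WNF_ctl_semantic}) from \ECL to \ECLp, using \alphai for the new base case in \Cref{se_2_form} and \sealpha for the new case in \Cref{general_se_single_lm}, then rerun the proof of \Cref{completeness_ctl} verbatim. You have simply spelled out in more detail what the paper summarizes in two sentences.
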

\begin{proof}
The theorem is proved by a similar argument to that of the proof of \Cref{completeness_ctl}. Especially, all the lemmas leading to \Cref{completeness_ctl} can be easily generalized to the case of any extension \ECLp of \ECL with new atoms having the empty team property. In particular, in the proof of the lemma that corresponds to \Cref{se_2_form}, if $\psi$ is a new atom $\alpha$, then $\psi^\ast_\Omega\vdash\psi$ follows from \alphai. When proving the lemma that corresponds to \Cref{general_se_single_lm}, one applies \sealpha when $\alpha$ is a new atom.
\end{proof}



Instantiating the new atoms $\alpha$ in the deduction system of \Cref{Natrual_Deduct_PInem} with independence atoms or inclusion atoms, we obtain sound and (strongly) complete deduction systems of \PInem and \PIncs. We end this section with a demonstration of the natural deduction system of \PInem. Below we derive the Geiger-Paz-Pearl axioms  \cite{Geiger-Paz-Pearl_1991}. 
To simplify notation, we write \indi for the Independence Atom Introduction Rule and \sei for the Independence Atom Elimination Rule.

\begin{example}\label{Geiger-Paz-Pearl_axiom_pinemz}
Let $\vec{x}=p_{i_1}\cdots p_{i_k}$, $\vec{y}=p_{j_1}\cdots p_{j_m} $ and $\vec{z}=p_{l_1}\cdots p_{l_n}$. The following Geiger-Paz-Pearl axioms are derivable in the natural deduction system of \PInem:
\begin{description}
\item[(i)] $\vec{x}\perp\vec{y}~\vdash\vec{y}\perp\vec{x}$
\item[(ii)] $\vec{x}\perp\vec{y}~\vdash\vec{z}\perp\vec{y}$, where $\vec{z}$ is a subsequence of $\vec{x}$.
\item[(iii)] $\vec{x}\perp\vec{y}~\vdash\vec{u}\perp\vec{v}$, where $\vec{u}$ is a permutation of $\vec{x}$ and $\vec{v}$ is a permutation of $\vec{y}$.
\item[(iv)] $\vec{x}\perp \vec{y},~\vec{x}\vec{y}\perp\vec{z}~\vdash\vec{x}\perp \vec{y}\vec{z}$.
\end{description}
\end{example}
\begin{proof} We are going to use the Independence Atom Introduction Rule \indi\ a lot here. Therefore the proofs below seem entirely semantical. However, we have built the meaning of the independence atom into the rule \indi, so it is only natural that we refer to this meaning in the proofs. This is more complicated but, in principle, analogous to the way we often use in elementary logic the meaning of ``and" and ``or" when we prove e.g. distributivity laws using just the Elimination Rule and Introduction Rule for $\wedge$ and $\vee$.

Put $K=\{i_1,\dots,i_k\}$, $M=\{j_1,\dots,j_m\}$ and $N=\{l_1,\dots,l_n\}$.  

(i) By \sei, it suffices to show that for any team $Y$ on $K\cup M$ such that $Y\models\vec{x}\perp\vec{y}$ we have $\Theta^\ast_Y\vdash\vec{y}\perp\vec{x}$. But this follows from \indi, as $Y\models \vec{y}\perp\vec{x}$ also holds.

(ii) By \sei, it suffices to show that for any team $Y$ on $K\cup M\cup N$ such that $Y\models\vec{x}\perp\vec{y}$ we have $\Theta^\ast_Y\vdash\vec{z}\perp\vec{y}$, where $\vec{z}$ is a subsequence of $\vec{x}$. In view of \indi, this is reduced to showing $Y\models \vec{z}\perp\vec{y}$. But this is obvious.

(iii) By \sei, it suffices to show that for any team $Y$ on $K\cup M$ such that $Y\models\vec{x}\perp\vec{y}$ we have $\Theta^\ast_Y\vdash \vec{u}\perp\vec{v}$, where $\vec{u}=p_{i_1}\cdots p_{i_a}$ is a permutation of $\vec{x}$ and $\vec{v}=p_{m_1}\cdots p_{m_b}$ is a permutation of $\vec{y}$. In view of \indi this is reduced to showing $Y\models \vec{u}\perp\vec{v}$, which follows from that $Y\models\vec{x}\perp\vec{y}$.


(iv) We will show that $(\vec{x}\perp \vec{y})\wedge(\vec{x}\vec{y}\perp\vec{z})\vdash\vec{x}\perp \vec{y}\vec{z}$. By \sei, it suffices to show that for any team $X$ on $K\cup M\cup N$ such that $X\models\vec{x}\perp\vec{y}$, we have  $\Theta^\ast_X\wedge(\vec{x}\vec{y}\perp\vec{z})\vdash\vec{x}\perp\vec{y}\vec{z}$. But this, by \sei, is further reduced to showing that for any team $Y$ on $K\cup M\cup N$ such that  $Y\models\vec{x}\vec{y}\perp\vec{z}$, we have $\Theta^\ast_X\wedge\Theta^\ast_Y\vdash\vec{x}\perp\vec{y}\vec{z}$.

Now, if $X\neq Y$, then by \sctri and \sexfalso, we derive  $\Theta^\ast_X\wedge\Theta^\ast_Y\vdash\bot\wedge\nem\vdash\vec{x}\perp\vec{y}\vec{z}$.

If $X=Y$, then $\Theta^\ast_X=\Theta^\ast_Y$. By  \indi, it suffices to show $X\models\vec{x}\perp\vec{y}\vec{z}$. For any $s_1,s_2\in X$, since $X\models\vec{x}\perp\vec{y}$, there exists $s_3\in X$ such that $s_1(\vec{x})=s_3(\vec{x})$ and $s_2(\vec{y})=s_3(\vec{y})$. But as $X\models\vec{x}\vec{y}\perp\vec{z}$, there exists $s_4\in X$ such that $s_4(\vec{x})s_4(\vec{y})=s_3(\vec{x})s_3(\vec{y})$ and $s_4(\vec{z})=s_2(\vec{z})$. We then conclude that $s_4(\vec{x})=s_3(\vec{x})=s_1(\vec{x})$ and $s_4(\vec{y})s_4(\vec{z})=s_3(\vec{y})s_2(\vec{z})=s_2(\vec{y})s_2(\vec{z})$. Hence $X\models\vec{x}\perp\vec{y}\vec{z}$ and this completes the proof.
\end{proof}

\section{Concluding remarks}\label{conc}

In our previous work \cite{VY_PD} we have investigated classical propositional logic, and several versions of propositional dependence logic. We proved their expressive completeness for downward closed team properties, and gave several complete axiomatizations of such logics. In this paper, we have studied propositional team logics more generally, recognizing that there is a whole hierarchy of them. We have established the results in \Cref{fig:exp_pw} concerning the expressive power of these logics. Several expressively complete logics are identified. For example, we have proved that \PU is  expressively complete for the set of union closed team properties. We also derived normal forms for many of the logics. Some of the logics we considered have the empty team property, and some do not.  We axiomatized the logics (\PT, \ECL, \PInem and \PIncs) without the empty team property, and we leave the concrete axiomatization of \PU  for future research.
As is reflected by the sophisticated Strong Elimination Rules we gave in this paper, propositional logics of independence are more intricate than propositional logics of dependence.  In particular, we feel that we have not yet fully understood the notion of independence, and neither a characterization of expressive power nor a complete axiomatization is given for \PI here. Nevertheless, it is our hope that the results obtained concerning logics around \PI will set the stage for further research in this field and lead to a better understanding of \PI, propositional independence logic, itself.

\section*{Acknowledgements}
The authors would like to thank Maria Aloni, Pietro Galliani, Lauri Hella, Juha Kontinen, V\'{i}t Pun\v{c}och\'{a}\v{r}, Floris Roelfsen and Dag Westerst\aa hl for useful conversations related to this paper.




\bibliographystyle{acm}


%
%
%

\clearpage

\end{document}